\tikzset{cdlabel/.style={above,sloped,
    execute at begin node=$\scriptstyle,execute at end node=$}}    
\tikzset{al/.style={->, bend right=45, thick}}
\tikzset{ar/.style={->, bend left=45, thick}}
\def\mathcenter#1{%
  \vcenter{\hbox{$#1$}}%
}
\newtheorem{theorem}{Theorem}[subsection]
\newtheorem{corollary}[theorem]{Corollary}
\newtheorem{proposition}[theorem]{Proposition}
\newtheorem{lemma}[theorem]{Lemma}
\newtheorem{definition}[theorem]{Definition}
\newtheorem*{remark*}{Remark}
\newtheorem*{example*}{Example}
\newcommand{\R}{\ensuremath{\mathbb{R}}}
\newcommand{\Z}{\ensuremath{\mathbb{Z}}}
\newcommand\HFKh{{\rm {\widehat{HFK}}}}
\newcommand\CFKh{{\rm {\widehat{CFK}}}}
\newcommand\alphas{\mbox{\boldmath$\alpha$}}
\newcommand\betas{\mbox{\boldmath$\beta$}}
\newcommand\zs{\mathbf z}
\newcommand\RT{{\rm {RT}}}
\def\triv{\mathrm{triv}}
\newcommand{\bdy}{\partial}
\newcommand{\sss}{{\bf s}}
\newcommand{\ttt}{{\bf t}}
\DeclareMathOperator{\id}{id}
\DeclareMathOperator{\ev}{ev}
\DeclareMathOperator{\Hom}{Hom}
\DeclareMathOperator{\End}{End}
\DeclareMathOperator{\BBar}{Bar}
\newcommand{\K}{\mathcal{K}}
\newcommand{\C}{\mathbb C}
\newcommand{\ctt}{\widetilde{\rm {CT}}}
\def\P{\mathcal{P}}
\def\a{\mathbf{a}}
\def\b{\mathbf{b}}
\def\H{\mathcal{H}}
\def\Cc{\mathcal{C}}
\def\CDA^-{\mathit{CDA}^-}
\newcommand{\sltwo}{\mathfrak{sl}_2}
\newcommand{\gloneone}{\mathfrak{gl}_{1|1}}
\newcommand{\Rv}{\check{R}}
\newcommand{\hfrak}{\mathfrak{h}}
\newcommand{\ve}{\varepsilon}
\newcommand{\qb}{\mathbf{q}}
\newcommand{\Pb}{\mathbf{P}}
\newcommand{\undi}{\underline{\mathrm{i}}}
\newcommand{\undlam}{\underline{\lambda}}
\newcommand{\unda}{\underline{a}}
\def\ev{\mathrm{ev}}
\def\coev{\mathrm{coev}}
\def\lcap{\mathrm{lcap}}
\def\rcap{\mathrm{rcap}}
\def\lcup{\mathrm{lcup}}
\def\rcup{\mathrm{rcup}}
\def\AA{A} 
\newcommand{\II}{I} 
\newcommand{\PP}{P} 
\newcommand{\ainf}{A_\infty}
\newcommand{\AMod}[1]{\prescript{}{#1}{\mathrm{Mod}}} 
\newcommand{\DMod}[1]{\prescript{#1}{}{\mathrm{Mod}}} 
\newcommand{\DuMod}[1]{\prescript{#1}{\mathfrak{u}}{\mathrm{Mod}}}
\newcommand{\ModA}[1]{\mathrm{Mod}_{#1}} 
\newcommand{\AMor}[1]{\prescript{}{#1}{\mathrm{Mor}}}
\newcommand{\DMor}[1]{\prescript{#1}{}{\mathrm{Mor}}}
\newcommand{\DuMor}[1]{\prescript{#1}{\mathfrak{u}}{\mathrm{Mor}}}
\newcommand{\MorA}[1]{\mathrm{Mor}_{#1}}
\newcommand{\hcat}[1]{\mathcal{H}(#1)}
\newcommand{\hcatcf}[1]{\mathcal{H}_{\mathit{cf}}(#1)}
\newcommand{\dcat}[1]{\mathcal{D}(#1)}
\newcommand{\dcatc}[1]{\mathcal{D}_c(#1)}
\def\bfk{{\bf k}} 
\def\bfj{{\bf j}} 
\def\Ft{\mathbb{F}_2} 
\newcommand{\dtensor}[1]{\mathbin{\widetilde{\otimes}}_{#1}} 
\newcommand{\ee}[1]{e_{#1}} 
\newcommand{\sssA}[1]{\sss^A_{#1}}
\newcommand{\sssD}[1]{\sss^D_{#1}}
\newcommand{\eeA}[1]{\ee{#1}^A}
\newcommand{\eeD}[1]{\ee{#1}^D}
\newcommand{\T}{\mathcal{T}} 
\newcommand{\Tdec}{\mathbb{T}} 
\newcommand{\tA}{\mathit{A}} 
\newcommand{\tD}{\mathit{D}}
\newcommand{\tAA}{\mathit{AA}}
\newcommand{\tAD}{\mathit{AD}}
\newcommand{\tDA}{\mathit{DA}}
\newcommand{\tDD}{\mathit{DD}}
\newcommand{\EE}{E}
\newcommand{\FF}{F}
\newcommand{\refequal}[1]{\xy {\ar@{=}^{#1}
(-1,0)*{};(1,0)*{}};
\endxy}
\tikzset{->-/.style={decoration={
  markings,
  mark=at position #1 with {\arrow{>}}},postaction={decorate}}}
\tikzset{middlearrow/.style={
        decoration={markings,
            mark= at position 0.5 with {\arrow{#1}} ,
        },
        postaction={decorate}
    }
}
\begin{document}

\title[{Quantum $\gloneone$ and tangle Floer homology}]{Quantum $\gloneone$ and tangle Floer homology}

\author[Alexander P. Ellis]{Alexander P. Ellis}
\email{apellis@gmail.com}
\urladdr{\href{http://ape.wtf}{http://ape.wtf}}
\author{Ina Petkova}
\address {Department of Mathematics, Dartmouth College\\ Hanover, NH 03755}
\email {ina.petkova@dartmouth.edu}
\urladdr{\href{http://math.dartmouth.edu/~ina}{http://math.dartmouth.edu/~ina}}
\author[Vera V\'ertesi]{Vera V\'ertesi}
\address{Institut de Recherche Math\'ematique Avanc\'ee \\Universit\'e de Strasbourg}
\email{vertesi@math.unistra.fr}

\keywords{tangles, knot Floer homology, quantum groups, TQFT}
\subjclass[2010]{57M27; 20G42}


\begin{abstract}
We identify the Grothendieck group of the tangle Floer dg algebra with a tensor product of certain $U_q(\gloneone)$ representations.  Under this identification, up to a scalar factor, the map on the Grothendieck group induced by the tangle Floer dg bimodule associated to a tangle agrees with the Reshetikhin-Turaev homomorphism for that tangle.  We also introduce dg bimodules which act on the Grothendieck group as the generators $E$ and $F$ of $U_q(\gloneone)$.
\end{abstract}

\maketitle


\tableofcontents



\section{Introduction} 
\label{sec:introduction}


\subsection{Alexander and Jones}

The Reshetikhin-Turaev construction \cite{RT} is a machine for turning a representation $W$ of a quantized enveloping algebra $U_q(\mathfrak{g})$ into a tangle invariant.  It takes:
\begin{enumerate}
	\item a sequence of oriented points to a tensor product of copies of $W$ and $W^*$ and 
	\item a tangle $\T$ to a $U_q(\mathfrak{g})$ intertwiner $\RT(\T)$ between the representations associated to its incoming and outgoing boundaries.
\end{enumerate}
The map associated to a tangle is an invariant of the tangle.

Special cases include the Jones polynomial ($\mathfrak{g} = \sltwo$, $W$ the vector representation $U$) and the Alexander polynomial ($\mathfrak{g} = \gloneone$, $W$ the vector representation $V$).  As interesting as these invariants are, more interesting still are their lifts---\emph{categorifications}---to more complicated invariants.

Khovanov homology is the poster child for categorification.  In its formulation for tangles \cite{chkh,brst}, it takes:
\begin{enumerate}
	\item a sequence of $n$ points to a graded ring $H_n$ and
	\item an $(m,n)$-tangle $\T$ to a complex $C_{\mathit{Kh}}(\T)$ of bimodules over $(H_m,H_n)$.
\end{enumerate}
(See also the earlier work in \cite{kh3} and the more geometrically flavored \cite{BN} when $m$ and $n$ are even.)
The homotopy equivalence class of $C_{\mathit{Kh}}(\T)$ is an invariant of the tangle.  Identifying a basis for the (complexified) Grothendieck group of the category of such complexes with a basis for $\Hom_{U_q(\sltwo)}(U^{\otimes m},U^{\otimes n})$, the functor given by tensor product with $C_{\mathit{Kh}}(\T)$ acts by $\RT(\T)$.

Furthermore, Khovanov homology is functorial: with some adjustments, one can associate a homotopy class of homomorphisms of complexes of bimodules over $(H_m,H_n)$ that is an invariant of tangle cobordisms \cite{KhCob,Jacobsson,MSW}.  The total package, then, is an extended 2D TQFT which categorifies the Reshetikhin-Turaev 1D TQFT.

\scalebox{.975}[1.0]{A more recent approach, initiated by Khovanov-Lauda \cite{KL1,KL3} and Rouquier \cite{Rou2KM},} seeks to categorify an even wider swath of quantum algebra: quantized enveloping algebras themselves, tensor products of their integrable highest weight representations, the Reshetikhin-Turaev intertwiners, and more.  Webster has used this approach to construct link homology theories which categorify the Reshetikhin-Turaev invariant for all representations and Kac-Moody types \cite{Webster} (without the maps for cobordisms).

None of these constructions, however, extend to the case of the Alexander polynomial.  There has been some work on the categorification of $U_q(\gloneone)$ \cite{Khgl12,Tian1,Tian2} (see also an approach via $u_{\sqrt{-1}}(\sltwo)$ \cite{v1,eq}), but so far, the only categorification of the Alexander polynomial has a rather different, non-representation theoretic origin.

Knot Floer homology, introduced by Ozsv\'ath-Szab\'o \cite{hfk} and Rasmussen \cite{jrth}, associates a bigraded chain complex $\CFKh(\mathcal H)$ to a Heegaard diagram $\mathcal H$ for a link $L$.  The differential on $\CFKh(\mathcal H)$ counts pseudoholomorphic curves with prescribed boundary conditions in an almost complex manifold defined in terms of $\mathcal H$.  The homology of $\CFKh(\mathcal H)$ is an invariant of $L$ denoted $\HFKh(L)$.

Like its distant cousin Khovanov homology, $\HFKh(L)$ has proven to be a powerful invariant.  Unfortunately, despite a completely combinatorial description of $\CFKh(\mathcal H)$ \cite{mos}, the invariant is still global in nature; local modifications are only partly understood \cite{hfk,oszskein,mskein}.  In order to fit $\HFKh(L)$ into the general pattern of Reshetikhin-Turaev invariants, then, two initial hurdles must be addressed: \emph{locality} and the \emph{relation to $U_q(\gloneone)$}.

\subsection{Jumping hurdles}

In 2014, the second and third authors introduced a local construction of knot Floer homology \cite{pv}.  A dg algebra $\AA(\PP)$ is associated to each oriented $0$-manifold $\PP$, and a dg bimodule $\ctt(\T)$ is associated to each oriented tangle $\T$.  For a closed tangle, i.e. a link, this bimodule agrees with knot Floer homology tensored with a $2$-dimensional vector space. The general structure should feel familiar to any bordered Heegaard Floer homologist: the bimodules in question are type $\tDA$ structures in the sense of \cite{bimod}, composition of these bimodules is via the box tensor product, and while the algebras and bimodules of \cite{pv} admit combinatorial descriptions inspired by Heegaard diagrams, the proof of invariance is  topological and analytic.

The Alexander polynomial, then, admits a categorification with local pieces very much like its construction as a Reshetikhin-Turaev invariant.  The wildly optimistic reader will expect these local pieces to categorify their Reshetikhin-Turaev counterparts.

Fortunately, this optimism is rewarded.

In Subsection \ref{subsec-gloneone}, to a sign sequence $\PP \in \{\pm1\}^n$ we associate the $U_q(\gloneone)$-representation $V_{\PP} \otimes L(\lambda_{\PP})$, where $V_{\PP}$ is a tensor product of copies of $V$ and $V^*$ ($V$ for plus and $V^*$ for minus) and $L(\lambda_{\PP})$ is an appropriately chosen $2$-dimensional representation depending on $\PP$, and a basis $B$ for $V_{\PP} \otimes L(\lambda_{\PP})$ whose vectors are in bijection with subsets of $[n]= \{0,1,\ldots,n\}$.

The dg algebra $\AA(\PP)$ has primitive idempotents in bijection with subsets of the set $[n]$.  Write $\ee{\sss}$ for the primitive idempotent corresponding to $\sss \subseteq [n]$.

\newtheorem*{thmA}{Theorem A}
\newtheorem*{thmB}{Theorem B}

\begin{thmA}
Let $\PP =(P_1,\ldots,P_n) \in \{\pm1\}^n$ be a sign sequence. Then the Grothendieck group of dg modules over the dg algebra $\AA(\PP)$ is a free $\Z[q^{\pm1}]$-module with basis $\{[\AA(\PP)\ee{\sss}] \mid \sss \subseteq [n]\}$.
   Identifying the basis vector $[\AA(\PP)\ee{\sss}]$ with the basis vector in $B$ associated to the subset $\sss$ determines an isomorphism of vector spaces
\begin{equation*}
K_0(\AA(\PP)) \otimes_{\Z[q^{\pm1}]} \C(q) \cong V_{\PP} \otimes L(\lambda_{\PP}).
\end{equation*}
Let $\T$ be a tangle and color each strand of $\T$ by the vector representation $V$.  Under the identification above, up to an overall factor of a positive integer power of \scalebox{.98}[1.0]{$(1-q^{-2})$}, box tensor product with the type $\tDA$ bimodule $\ctt(\T)$ acts on $K_0(\AA(\PP))$ as the Reshetikhin-Turaev intertwiner associated to the colored  \scalebox{.993}[1.0]{tangle $\T$ tensored with $\id_{L(\lambda_{\PP})}$ (see Figure \ref{fig:RTtangle}). The precise statement is given in Equation \eqref{eqn:thmA}. }\footnote{The appearance of the extra representation $L(\lambda_P)$ is possibly related to the fact that tangle Floer homology recovers knot Floer homology tensored with a $2$-dimensional vector space, rather than knot Floer homology alone.}
\end{thmA}

\begin{figure}[h]
\centering
  \labellist
            \pinlabel \textcolor{red}{$\T$} at 128 90
            \pinlabel  \textcolor{red}{$\rotatebox{270}{$\T$}$} at 360 80
            \pinlabel $K_0(\AA(-\bdy^0\T))\xleftarrow{[\ctt]}K_0(\AA(\bdy^1\T))$ at 126 25
            \pinlabel $=$ at 282 80
            \pinlabel $V_{\bdy^1\T}$ at 432 30
            \pinlabel $\Big\uparrow$ at 432 80
            \pinlabel $V_{-\bdy^0\T}$ at 432 130
            \pinlabel $\RT$ at 418 80
            \pinlabel $\otimes$ at 484 80
            \pinlabel $L(\lambda_{\bdy^1\T})$ at 544 30
            \pinlabel $\Big\uparrow$ at 544 80
            \pinlabel $L(\lambda_{\bdy^1\T})$ at 544 130
            \pinlabel $\id$ at 556 80
       \endlabellist
\includegraphics[scale = .67]{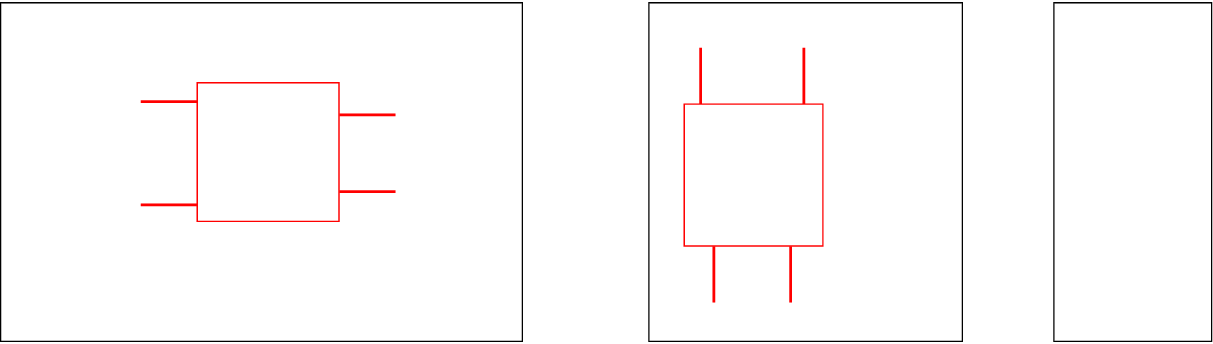} 
      \vskip .2 cm
       \caption{The Reshetikhin-Turaev invariant $\RT(\T)$ for the $U_q(\gloneone)$-representation $V$, later denoted by $Q(\T)$,  corresponds to the action of $\ctt(\T)$ on $K_0(\AA(\PP))$.}
       \label{fig:RTtangle}
\end{figure}

Theorem A is proved in Subsections \ref{subsec-A-P} and \ref{subsec-K0-CT}.

We also introduce dg bimodules $\EE(\PP)$ and $\FF(\PP)$ over $(\AA(\PP), \AA(\PP))$ which act on $K_0(\AA(\PP))$ as the elements $E$ and $F$ of $U_q(\gloneone)$.  In disanalogy with other categorifications of quantized enveloping algebras, these dg bimodules do not arise from induction and restriction with respect to a tower of algebras comprising the dg algebras $\AA(\PP)$.

\begin{thmB}
For any sign sequence $\PP$, under the identification of the elementary basis with the basis $B$ from the previous theorem, the actions of the dg bimodules $\EE(\PP)$ and $\FF(\PP)$ on $K_0(\AA(\PP))$ equal the actions of $E,F \in U_q(\gloneone)$ on $V_{\PP} \otimes L(\lambda_{\PP})$.

There are quasi-isomorphisms
\begin{equation*}
\EE(\PP) \dtensor{\AA(\PP)} \EE(\PP) \simeq 0, \quad
\FF(\PP) \dtensor{\AA(\PP)} \FF(\PP) \simeq 0.
\end{equation*}
Furthermore, there is a distinguished triangle
\begin{equation*}
\xymatrix{
\EE(\PP) \dtensor{\AA(\PP)} \FF(\PP) \ar[r] & \AA(\PP) \ar[r] & \FF(\PP)\dtensor{\AA(\PP)} \EE(\PP) \ar[r] & \EE(\PP) \dtensor{\AA(\PP)} \FF(\PP)[1],
}
\end{equation*}
in $\dcat{\AA(\PP)}$.
\indent For any tangle $\T$,
\begin{equation*}\begin{split}
& \EE(-\bdy^0\T) \boxtimes \ctt(\T) \simeq \AA(-\bdy^0\T) \boxtimes \ctt(\T) \dtensor{\AA(\bdy^1\T)} \EE(\bdy^1\T), \\
& \FF(-\bdy^0\T) \boxtimes \ctt(\T) \simeq \AA(-\bdy^0\T) \boxtimes \ctt(\T) \dtensor{\AA(\bdy^1\T)} \FF(\bdy^1 \T)
\end{split}\end{equation*}
as type $\tAA$ bimodules over $(\AA(-\bdy^0\T), \AA(\bdy^1\T))$.
\end{thmB}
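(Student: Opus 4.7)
The plan is to address the four assertions of the theorem in sequence. Throughout, I would work with explicit models of $\EE(\PP)$, $\FF(\PP)$, and $\ctt(\T)$ as dg bimodules, so that at each step the claim reduces to a verifiable computation on generators.

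For the Grothendieck group statement, I would expand the box tensor product $\EE(\PP) \boxtimes \AA(\PP)\ee{\sss}$ and track which projective summands $\AA(\PP)\ee{\sss'}$ appear, together with their $q$-gradings. Under the identification of Theorem A, the resulting $\Z[q^{\pm1}]$-linear combination should match $E \cdot v_{\sss}$ in $V_{\PP} \otimes L(\lambda_{n+1})$; the computation for $\FF(\PP)$ is analogous. Since the generators $\ee{\sss}$ correspond to subsets of $[n]$ and $E,F$ act by toggling a single index up or down on the weight basis, the verification reduces to matching a single step of set-toggling against a single arrow in the bimodule.

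For the nilpotency relations $\EE(\PP) \dtensor{\AA(\PP)} \EE(\PP) \simeq 0$ and $\FF(\PP) \dtensor{\AA(\PP)} \FF(\PP) \simeq 0$, I would exhibit explicit null-homotopies. The conclusion is forced at the $K_0$ level by $E^2 = F^2 = 0$ in $U_q(\gloneone)$, but at the chain level one must construct a degree-$(-1)$ endomorphism whose differential is the identity. I expect the homotopies to arise directly from the combinatorial description of $\EE(\PP)$ and $\FF(\PP)$ on the finitely many idempotents, so that this step is essentially a bookkeeping exercise once the bimodule structures are in hand.

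For the distinguished triangle, I would define an explicit morphism $\EE(\PP) \dtensor{\AA(\PP)} \FF(\PP) \to \AA(\PP)$ of dg bimodules, form its mapping cone, and identify the cone with $\FF(\PP) \dtensor{\AA(\PP)} \EE(\PP)$. This map categorifies the $U_q(\gloneone)$ relation expressing $EF + FE$ in terms of the Cartan generators; a compatible candidate is typically manufactured from the obvious compositions of the building blocks of $\EE(\PP)$ and $\FF(\PP)$. Once the cone is computed in each idempotent, the identification with $\FF(\PP) \dtensor{\AA(\PP)} \EE(\PP)$ should follow from the explicit description of the latter together with a homological perturbation argument if needed.

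Finally, for the intertwining with $\ctt(\T)$, the natural reduction is to the generating tangles (cups, caps, and positive and negative crossings in all orientations) using the gluing formula for $\ctt$ under box tensor product, together with compatibility of $\EE(\PP)$ and $\FF(\PP)$ with changes of the boundary parametrization. For each elementary tangle I would write down an explicit type $\tAA$ bimodule morphism between the two sides and verify it is a quasi-isomorphism. I expect this last step to be the main obstacle: there are many cases, and each requires a careful comparison of generators on the two sides along with verification of the $\ainf$ relations. A potential shortcut is to realize $\EE(\PP)$ and $\FF(\PP)$ as tangle Floer bimodules of certain elementary handle attachments, in which case the intertwining property would follow from the invariance properties already built into $\ctt$.
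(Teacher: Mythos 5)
Your plans for the $K_0$ statement and for the vanishing of $\EE(\PP)\dtensor{\AA(\PP)}\EE(\PP)$ and $\FF(\PP)\dtensor{\AA(\PP)}\FF(\PP)$ essentially track the paper: there, Proposition \ref{prop-bimodule-E} filters $\FF_k\ee{\ttt}$ by shifted projectives to read off the matrices inductively, and $\FF\otimes_{\AA}\FF$ is contracted by pairing each generator in which the two added black strands cross with the generator obtained by resolving that crossing --- after first using that $\FF$ is \emph{left} cofibrant (Corollary \ref{cor:EF-cofibrant}) so that the derived tensor product may be replaced by the plain one. The genuine gap is in your treatment of the distinguished triangle. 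You propose to define a map $\EE\dtensor{\AA(\PP)}\FF\to\AA$, take its cone $C(f)$, and identify the cone with $\FF\dtensor{\AA(\PP)}\EE$ ``from the explicit description of the latter.'' But no such description is available by naive tensoring: $\FF$ is cofibrant only as a left module and $\EE$ only as a right module, so $\FF\dtensor{\AA(\PP)}\EE$ is precisely the tensor product that cannot be computed underived, and identifying it is the hard point rather than a perturbation afterthought. The paper resolves this non-algebraically: it inserts the two-sided cofibrant bimodule $\ctt(\id_{\PP})$, interprets $\FF\boxtimes(\ctt(\id_{\PP})\otimes_{\AA}\EE)$ and $\AA\boxtimes\ctt(\id_{\PP})\otimes_{\AA}C(f)$ as coming from Heegaard diagrams, shows they are homotopy equivalent by a handleslide, and then strips off the resulting $(\Ft\oplus\Ft[1]\{2\})^{\otimes n}$ factors using invariance. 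Your proposal contains no substitute for this step.

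The same geometric input is what proves the commutation with $\ctt$: instead of checking cups, caps, and all oriented crossings case by case, the paper appends a trivial elementary tangle so that the relevant module is right cofibrant and then slides the extra $\beta$-circle across the entire Heegaard diagram in one stroke, invoking handleslide invariance. Your case-by-case scheme is not wrong in principle, but it multiplies the work (an explicit quasi-isomorphism of type $\tAA$ bimodules for each elementary tangle, plus a composition argument that keeps track of derived versus box tensor products and of where cofibrancy is needed), and you yourself flag it as the bottleneck without carrying any case out. The ``shortcut'' you mention --- realizing $\EE$ and $\FF$ as Heegaard-diagram-type pieces so that commutation follows from invariance already built into the theory --- is in fact the spirit of the paper's actual argument, and it is the direction you would need to develop to close the gaps in both the triangle and the intertwining statements.
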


Theorem B is proved in Subsection \ref{subsec-add-black}.

\subsection{Outline}

The reader already conversant with $\ainf$ algebra in the context of bordered Heegaard Floer homology is encouraged to skip Subsections \ref{subsec-ainf} and \ref{subsec:dg-algebra}, which review dg and $\ainf$ algebras, including derived categories of type $\tA$ and type $\tD$ structures.

In Subsection \ref{subsec-gloneone}, we review the decategorified setting: $U_q(\gloneone)$, the particular representations we will be concerned with, a canonical basis for these representations, and the Reshetikhin-Turaev maps on these representations.  This subsection concludes with a description of how to construct the Alexander polynomial in this language.

Section \ref{sec:tangle} is an exposition of the tangle Floer package in the language of strand diagrams (see also \cite[Section 3]{pv}).  Since we only need a special case of the general construction, we are able to make several simplifications; see the dictionary in Subsection \ref{ssec:dictionary}.  Subsections \ref{ssec:alg} through \ref{ssec:DAmodule} review the construction, and Subsection \ref{ssec:CTthms} recalls the main theorems of \cite{pv}.

Theorems A and B are proved in the aptly named Section \ref{sec:results}.  Subsection \ref{subsec-A-P} computes the Grothendieck group of $\AA(\PP)$.  Subsection \ref{subsec-K0-CT} computes the action of $\ctt(\T)$ on this group and relates the result to the representations $V_{\PP} \otimes L(\lambda_{\PP})$ and the Reshetikhin-Turaev maps of Subsection \ref{subsec-gloneone}.  Subsection \ref{subsec-add-black} gives the categorical $U_q(\gloneone)$-module structure on the derived category of compact dg modules over $\AA(\PP)$.

For the reader uninterested in the details, Section \ref{sec:results} can be understood at a purely formal level after giving Section \ref{sec:tangle} not much more than a good skim.

\subsection*{Acknowledgments}

We thank Mikhail Khovanov, Anthony Licata, Robert Lipshitz, Andy Manion, You Qi, and Joshua Sussan for many helpful comments and conversations. We thank the referees for many useful suggestions. A.P.E. and I.P. each received support from an AMS-Simons travel grant. V.V. was supported by ERC Geodycon, OTKA grant number NK81203 and NSF
grant number 1104690.



\section{Preliminaries} 
\label{sec:preliminaries}

\subsection{$\ainf$ algebras and modules}\label{subsec-ainf}

In this and the following subsection, we review some definitions and constructions from \cite{bfh2, bimod}.  This subsection will review modules and bimodules over dg and $\ainf$ algebras; the next will discuss categories of modules and their Grothendieck groups.  For more details, see \cite[Section 2]{bimod}.  Throughout, we work over a ground ring $\bfk$ that is assumed to be a direct sum of a finite number of copies of the two-element field $\Ft$.  Although the $\ainf$ algebras in our main construction are all dg algebras, the general $\ainf$ context is more natural, so we will work in that generality.  Our review will be brief and mostly serves to establish notation; see \cite{Keller,bimod} for more details.

Throughout, we will need to distinguish between homological (later, ``Maslov'')  and inner (later, ``(twice) Alexander'') gradings.  The former is the grading which interacts with differentials, $\ainf$ structure maps, and so forth.  Structure maps will preserve inner gradings.  For a bigraded chain complex $V$, we write $V[k]$ for the complex obtained from $V$ by decreasing the homological grading by $k$.  It has graded pieces $V[k]_i=V_{k+i}$.  To shift inner rather than homological gradings, we write $V\{k\}$.

Throughout, all algebras and modules will be assumed to be finitely generated.

Write $T^*(V)=\bigoplus_{k=0}^\infty V^{\otimes k}$ for the tensor algebra of $V$ and $\overline{T}^*(V)=\prod_{k=0}^\infty V^{\otimes k}$ for its completion.  The $k$-th graded piece of $T^*(V[1])$ is $V^{\otimes k}[k]$.

\begin{definition} An \emph{$\ainf$ algebra} over $\bfk$ is a (homologically) $\Z$-graded $\bfk$-bimodule $A$ equipped with degree $0$ $\bfk$-linear maps
\begin{equation*}
\mu_i: A^{\otimes i} \to A[2-i]\text{ for }i=1,2,\ldots
\end{equation*}
satisfying a certain compatibility condition.  To state this condition, we first define a degree $1$ $\bfk$-linear map $D^A:T^*(A[1])\to T^*(A[1])$.  On $(A[1])^{\otimes n}$, $D^A$ acts as
\begin{equation}
D^A|_{(A[1])^{\otimes n}} = \sum_{j=1}^n \sum_{\ell=1}^{n-j+1} \id_A^{\otimes(j-1)} \otimes \mu_\ell \otimes \id_A^{\otimes(n-\ell-j+1)}.
\end{equation}
The compatibility condition is that
\begin{equation}
D^A \circ D^A = 0
\end{equation}
(or, equivalently, that the part of $D^A \circ D^A$ with image in $A[1]$ is zero).
\end{definition}

Intuitively, the condition says that the sum over all ways to apply two of the $\mu_i$'s in sequence is zero.  There is also a graphical formulation in terms of trees (see, for instance, \cite{bimod}).

\begin{definition} We say an $\ainf$ algebra $A$ is \emph{(strictly) unital} if there is an element $1\in A$ such that $\mu_2(1,a)=\mu_2(a,1)=a$ for all $a\in A$ and $\mu_k(a_1,\ldots,a_k)=0$ if $k>2$ and $a_j=1$ for some $j$.  A strictly unital $\ainf$ algebra $A$ is \emph{augmented} if it is equipped with a $\bfk$-linear map $\epsilon:A\to\bfk$  such that $\epsilon(1)=1$, $\epsilon(\mu_2(a_1,a_2))=\epsilon(a_1)\epsilon(a_2)$, and $\epsilon\circ\mu_k=0$ for $k\neq2$.  If $A$ is unital and augmented, we write $A_+$ for $\ker(\epsilon)$.\end{definition}

\emph{From now on, we assume all $\ainf$ algebras to be strictly unital and augmented.}  Let $A$ be an $\ainf$ algebra over $\bfk$.  Undecorated tensor products are assumed to be taken over $\bfk$.

\begin{definition} A \emph{right $\ainf$ module over $A$} (also called a \emph{right type $\tA$ structure over $A$}) is a graded $\bfk$-module $M$ equipped with degree $1$ $\bfk$-linear maps 
\begin{equation}
m_{i+1}: M\otimes (A[1])^{\otimes i}\to M\text{ for }i=0,1,2,\ldots,
\end{equation}
satisfying a certain compatibility condition.  Assemble all the $m_i$'s to form a single degree $1$ map $m:M\otimes T^*(A[1])\to M$, as we did for $D^A$ above.  Then the compatibility condition is that 
\begin{equation}\label{eqn-type-A-structure-eqn}
m_1 \circ m + m \circ (\id_M \otimes D^A) = 0.
\end{equation}
  A type $\tA$ structure is \emph{strictly unital} if $m_2(x, 1) = x$ and $m_i(x, a_1,\cdots, a_{i-1})=0$ if $i>2$ and some $a_j \in \bfk$.  If $M$ is strictly unital, then equivalently, we can restrict $m$ to $M \otimes T^*(A_+[1])$ and impose the same condition \eqref{eqn-type-A-structure-eqn} on this restriction.
\end{definition}

In the case that $A$ is a dg algebra (meaning $\mu_i=0$ for $i>2$) with differential $d_A=\mu_1$, the type $\tA$ module condition simplifies to
\begin{equation*}\begin{split}
0 &= \sum_{i+j = n+1}m_i(m_j(x, a_1, \cdots, a_{j-1}), \cdots , a_{n-1}) \\
& \qquad + \sum_{i=1}^{n-1} m_n(x, a_1,\cdots, a_{i-1}, d_A(a_i),\cdots, a_{n-1}) \\
& \qquad + \sum_{i=1}^{n-2} m_{n-1}(x, a_1,\cdots, a_{i-1}, a_ia_{i+1},\cdots, a_{n-1})
\end{split}\end{equation*}
for all $n\geq1$.  If we further assume that $M$ is a dg module (meaning $m_i=0$ for $i>2$) with differential $d_M=m_1$, the condition simplifies to
\begin{equation*}\begin{split}
&n=1: \qquad d_M^2 = 0, \\
&n=2: \qquad m_2(d_M(x), a) + d_M(m_2(x, a)) + m_2(x, d_A(a)) = 0, \\
&n=3: \qquad m_2(m_2(x, a), b) + m_2(x, ab) = 0.
\end{split}\end{equation*}

We assume all type $\tA$ structures to be strictly unital.  Left type $\tA$ structures can be defined analogously.  We will write $\prescript{}{A}M$ (respectively $M_A$) to indicate that $M$ is a left (respectively right) module over $A$.

We say that $M$ is \emph{bounded} if  $m_i=0$ for all sufficiently large $i$.

\begin{definition} A \emph{left type $\tD$ structure}  over $A$ is a graded $\bfk$-module $N$ equipped with a degree $0$ $\bfk$-linear homogeneous map
$$\delta^1:N\to (A\otimes N)[1]$$
satisfying a certain compatibility condition.  To state this condition, define maps
$$\delta^k:N\to (A[1])^{\otimes k}\otimes N$$
for $k\geq0$ inductively by $\delta^0=\id_N$ and
\begin{equation}
\delta^k = (\id_{A^{\otimes(k-1)}} \otimes \delta^1) \circ \delta^{k-1} \text{ for }k\geq2.
\end{equation}
Define a map $\delta:N\to\overline{T}^*(A[1])$ by
\begin{equation*}
\delta(x) = \sum_{i=0}^\infty \delta^i(x).
\end{equation*}
The compatibility condition is then
\begin{equation}
(\mu \otimes \id_N) \circ \delta(x) = 0.
\end{equation}
\end{definition}

Right type $\tD$ structures can be defined analogously.  We write $\prescript{A}{}{N}$ (respectively $N^A$) to indicate that $N$ is a left (respectively right) type $\tD$ structure over $A$.

A type $\tD$ structure is \emph{bounded} if for any $x \in N$, $\delta^i(x) = 0$ for all sufficiently large $i$.
 
If $M_A$ is a right $\ainf$ module over $A$ and $\prescript{A}{}{N}$ is a left type $\tD$ structure over $A$, and at least one of them is bounded, we can define the \emph{box tensor product} $M\boxtimes N = M_A\boxtimes\prescript{A}{}{N}$ to be the  vector space $M\otimes_\bfk N$ with differential 
$$\bdy: M\otimes N \to (M\otimes N)[1]$$
defined by 
$$\bdy = \sum_{k=1}^{\infty}(m_k\otimes \id_N)\circ(\id_M\otimes \delta^{k-1}).$$
The boundedness condition guarantees that the above sum is finite. In that case $\bdy^2= 0$, and $M\boxtimes N$ is a graded chain complex.  The box tensor product is a model for the derived tensor product of two type $\tA$ modules, as we explain in the following subsection.  The idea: $\prescript{}{A}{A}_A\boxtimes\prescript{A}{}{N}$ is a left type $\tA$ module, and $M\boxtimes N$ is homotopy equivalent to the usual derived tensor product (which we define below) $M\dtensor{A}(A\boxtimes N)$.  The advantage to $M\boxtimes N$ is that it is often finite dimensional, while $M\dtensor{A}(A\boxtimes N)$ is always infinite dimensional.

Given dg algebras $A$ and $B$ over  $\bfk$ and  $\bfj$ with differentials and multiplications $d_A$, $d_B$, $\mu_A$, and $\mu_B$, respectively, four types of bimodules can be defined in a similar way: types $\tDD$, $\tAA$, $\tDA$, and $\tAD$. See \cite[Section 2.2.4]{bimod} for details; we will review them briefly.

An \emph{$\ainf$ bimodule} or \emph{type $\tAA$ bimodule over $(A,B)$} is a graded  $(\bfk, \bfj)$-bimodule $M$, together with degree $0$  maps
$$m_{i,1,j}:(A[1])^{\otimes i}\otimes M\otimes (B[1])^{\otimes j}\to M$$
subject to compatibility conditions analogous to those for type $A$ structures, see \cite[Equation 2.2.38]{bimod}.

We assume all type $\tAA$ bimodules to be \emph{strictly unital}: $m_{1,1,0}(1,x) = x = m_{0,1,1}(x,1)$ and $m_{i,1,j}(a_1, \ldots, a_i, x, b_1, \ldots, b_j) = 0$ if $i+j>1$ and some $a_i$ or $b_j$ lies in $\bfk$ or $\bfj$.

A \emph{type $\tDA$ bimodule over $(A,B)$} is a graded  $(\bfk, \bfj)$-bimodule $M$, together with degree $0$, $(\bfk, \bfj)$-linear maps
$$\delta^1_{1+j}: M\otimes B[1]^{\otimes j}\to A\otimes M[1],$$
satisfying a compatibility condition combining those for type $\tA$ and $\tD$ structures, see \cite[Definition 2.2.42]{bimod}. 
 
 A \emph{type $\tAD$ structure} can be defined similarly, with the roles of $A$ and $B$ interchanged. 
 
A \emph{type $\tDD$ structure over $(A,B)$} is a type $\tD$ structure over $A\otimes_{\Ft}B^{\mathrm{op}}$. In other words, it is a graded $(\bfk, \bfj)$-bimodule $M$ and a degree $0$ map $\delta^1: M \to A\otimes M\otimes B[1]$, again with an appropriate compatibility condition.

When $A$ is the trivial algebra $\{1\}$, a type $\tAD$ structure over $(A,B)$ is the same thing as a right type $\tD$ structure over $B$.  Similar statements hold for other bimodule structure types, \emph{mut. mut.}

There are notions of boundedness for bimodules similar to those for one-sided modules. For each compatible pair of bimodule types, there is a corresponding box tensor product. When forming a box tensor product, we always assume that one of the factors is bounded.  We illustrate the idea below for the box tensor product of two type $\tDA$ structures; for details, see \cite[Section 2.3.2]{bimod}. 

Let $M$ be a type $\tDA$ module over $(A,B)$ and $N$ a type $\tDA$ bimodule over $(B,C)$.  As a chain complex, define
\begin{equation}
\prescript{A}{}{M}_B \boxtimes \prescript{B}{}{N}_C = \mathcal{F}_1(\prescript{A}{}{M}_B) \boxtimes \mathcal{F}_2(\prescript{B}{}{N}_C),
\end{equation}
where $\mathcal{F}_1(\prescript{A}{}{M}_B)$ is the right type $\tA$ structure over $B$ obtained from $\prescript{A}{}{M}_B$ by forgetting its left type $\tD$ structure over $A$ (and analogously for $\mathcal{F}_2(\prescript{B}{}{N}_C)$).  This chain complex can be given the structure of a type $\tDA$ bimodule over $(A,C)$ in a natural way.

\subsection{Categories and Grothendieck groups of $\ainf$ modules}\label{subsec:dg-algebra}

Besides \cite[Section 2]{bimod}, helpful references for the material in this section include \cite{BL,Keller,Khgl12}.  We will work in the language of $\ainf$ and dg categories.  While we recommend \cite{Keller} as a reference for these, the reader is invited to not worry about the details and instead let the analogy ``$\ainf$ categories over $\bfk$ are to $\ainf$ algebras over $\bfk$ as $\bfk$-linear categories are to $\bfk$-algebras'' be their guide.  In other words, $\ainf$ categories have higher compositions, and compositions hold up to homotopies of homotopies of\ldots \emph{ad. inf.}

\subsubsection{Categories of type $\tA$ modules}

Let $A$ be an $\ainf$ algebra and $M_A,N_A$ right type $\tA$ modules over $A$.  Define
\begin{equation*}
\MorA{A}(M,N) = \Hom_\bfk (M \otimes T^*(A_+[1]), N),
\end{equation*}
the \emph{morphism space} of type $\tA$ maps from $M$ to $N$.  The usual differential
\begin{equation*}
d(f) = d_N \circ f + f \circ d_M
\end{equation*}
makes $\MorA{A}(M,N)$ into a chain complex.  We use these as the morphism spaces to define the $\ainf$ category $\ModA{A}$ of right type $\tA$ modules over $A$.  The cycles of $\MorA{A}(M,N)$ are the \emph{$\ainf$ homomorphisms} from $M_A$ to $N_A$, and the boundaries are called \emph{null-homotopic} morphisms.  Explicitly, a morphism $f:M\to N$ is null-homotopic if there is a degree $-1$ map $h$ such that
\begin{equation}
h \circ \mu_1^M + \mu_1^N \circ h = f.
\end{equation}
Define the dg category $\AMod{A}$ of left type $\tA$ modules over $A$ analogously.

When $A$ is an $\ainf$ algebra, there are two models for the derived category of $A$-modules: 
\begin{itemize}
    \item the $0$-th homology category $H(\AMod{A})$ (same objects as $\AMod{A}$, quotient each morphism space by the subspace of null-homotopic morphisms);
    \item the localization of $\AMod{A}$ at the class of quasi-isomorphisms (morphisms which induce an isomorphism on homology).
\end{itemize}
These two categories are equivalent as triangulated categories \cite[Proposition 2.4.1]{bimod}.

If $A$ is a dg algebra, then there are three more models for the derived category:
\begin{itemize}
    \item dg modules and homotopy classes of dg module homomorphisms, localized at the class of quasi-isomorphisms;
    \item dg modules and homotopy classes of $\ainf$ homomorphisms;
    \item the localization of the previous at the class of quasi-isomorphisms.
\end{itemize}
All five models are equivalent as triangulated categories \cite[Proposition 2.4.1]{bimod}.  By abuse of notation, write $\dcat{A}$ for any of these triangulated categories.

For the rest of this subsection, we restrict to the case where $A$ is a dg algebra and use the dg modules, dg homomorphisms model of $\dcat{A}$.  In our main construction later, all $\ainf$ algebras will be dg algebras.

In this case, the shift functor $\dcat{A}\to\dcat{A}$ is given on objects by $(M,d_M)\mapsto(M[1],-d_M)$.  The distinguished triangles are those isomorphic to triangles of the form
\begin{equation*}
\xymatrix{M \ar[r]^-f & N \ar[r] & C(f) \ar[r] & M[1],}
\end{equation*}
where $C(f)$ is the cone of $f$, and the maps in and out of $C(f)$ are the evident inclusion and projection maps.  This triangulated structure can also be obtained by viewing the category of dg modules and dg module homomorphisms as a Frobenius category and identifying $\dcat{A}$ with its stable category \cite{Keller}.

Let $\hcat{A}$ be the usual homotopy category of dg modules over $A$: objects are left dg modules over $A$, and morphisms spaces are the quotient of all dg homomorphisms by the null-homotopic homomorphisms.

In the derived category of an ordinary algebra, morphism spaces can be computed by taking projective resolutions.  The analogous notion in dg algebra is that of a cofibrant module (also called ``projective'' in \cite{Khgl12}, ``K-projective'' in \cite{BL}).  We say a dg module $P$ over $A$ is \emph{cofibrant} if whenever we are given a surjective quasi-isomorphism $L\to M$ and a morphism $P\to M$, we can factor the latter through $L$,
\begin{equation*}
\xymatrix{& P \ar[d] \\ L \ar[r]^-{\simeq} \ar@{<--}[ur]^-{\exists} & M.}
\end{equation*}
Let $\hcatcf{A}$ be the smallest subcategory of $\hcat{A}$ containing all cofibrant modules and closed under arbitrary direct sums.  Then the restriction of the localization functor $\hcat{A}\to\dcat{A}$ to $\hcatcf{A}$ is an equivalence of triangulated categories,
\begin{equation*}
\xymatrix{\hcat{A} \ar[r] & \dcat{A}. \\ \hcatcf{A} \ar@{^{(}->}[u] \ar[ur]_-{\simeq}}
\end{equation*}
There is a standard way to choose a cofibrant replacement for a dg (or $\ainf$) module $M$.  There is a type $\tDD$ bimodule $\prescript{A}{}\BBar(A)^A$ (see \cite[Definition 2.3.16]{bimod}) such that
\begin{equation}
\BBar(M_A) = M_A \boxtimes \prescript{A}{}\BBar(A)^A \boxtimes \prescript{}{A}A_A
\end{equation}
is cofibrant for any $M_A$.  This \emph{bar resolution} is functorial in $M_A$ and comes with a canonical map $\BBar(M_A) \to M_A$.  There is the obvious definition for left modules as well.

\begin{remark*} \emph{We do not need to localize at the class of quasi-isomorphisms when we use the ``dg modules and $\ainf$ homomorphisms modulo homotopy'' model because the definition of $\AMor{A}$ already includes the bar resolution.}\end{remark*}

We now turn to Grothendieck groups.  If $\Cc$ is a triangulated category (assumed to be essentially small) with shift functor $X\mapsto X[1]$, then its \emph{Grothendieck group} $K_0(\Cc)$ is the quotient of the free abelian group on the set of symbols
\begin{equation*}
\{[X] \mid X\text{ is an isomorphism class of objects of }\Cc\}
\end{equation*}
by the following two relations:
\begin{enumerate}
    \item $[Y] = [X] + [Z]$ for every triangle $X\to Y\to Z\to X[1]$ isomorphic to a distinguished triangle;
    \item $[X[1]] = -[X]$ for every object $X$.
\end{enumerate}

As with ordinary algebras, we must restrict to a class of suitably small modules in order to get a nontrivial Grothendieck group (recall the ``Eilenberg swindle'', $A^{\oplus\infty}\oplus A\cong A^{\oplus\infty}$).  A useful condition is the following: we say a dg module $M$ over $A$ is \emph{compact} (also called ``small'' in \cite{Keller}) if the functor $\Hom_{\dcat{A}}(M,-)$ commutes with arbitrary direct sums.  Let $\dcatc{A}$ be the smallest full triangulated subcategory of $\dcat{A}$ that contains all compact objects and is closed under isomorphisms; we call this the \emph{compact (or perfect) derived category} of $A$.  Define the Grothendieck group of the dg algebra $A$ to be
\begin{equation}
K_0(A) = K_0(\dcatc{A}).
\end{equation}
This is not necessarily the same as the Grothendieck group of the category of finitely generated (non-dg) projective modules over $A$.

A dg bimodule $M$ over $(A,B)$ gives rise to a \emph{derived tensor functor}
\begin{equation*}\begin{split}
M \dtensor{B} (-): \dcat{B} & \to \dcat{A}, \\
N & \mapsto M \otimes_B \BBar(N).
\end{split}\end{equation*}
There is also a derived hom functor, right adjoint to the derived tensor functor.  If $M\dtensor{B}(-)$ sends compact objects to compact objects, then there is an induced homomorphism
\begin{equation*}
[M \dtensor{B} -] : K_0(B) \to K_0(A).
\end{equation*}

\subsubsection{Categories of type $\tD$ modules; bimodules and functors}

If $\prescript{A}{}{M}$ and $\prescript{A}{}{N}$ are left type $\tD$ structures over $A$ with respective structure maps $\delta^M$ and $\delta^N$, let
\begin{equation}
\DMor{A}(\prescript{A}{}{M},\prescript{A}{}{N}) = \Hom_\bfk(M,A\otimes N).
\end{equation}
Write $h^1$ for an element of this morphism space, and for each $i\geq2$, define $h^i:M\to A^{\otimes i}\otimes N$ by
\begin{equation}
h^i = \sum_{j=0}^{i-1} (\id_{A^{\otimes(j-1)}} \otimes (\delta^N)^{i-j-1}) \circ (\id_{A^{\otimes j}} \otimes h^1) \circ (\delta^M)^j
\end{equation}
and $h:M\to\overline{T}^*(A)\otimes N$ by $h=\sum_{i=1}^\infty h^i$.  We give this space a differential $\partial$ defined by
\begin{equation}
(\partial h)^1 = (\mu \otimes \id_N)\circ h.
\end{equation}
Let $\DuMod{A}$ be the $\ainf$ category of left type $\tD$ structures over $A$ with these morphism complexes (see \cite[Lemma 2.2.27]{bimod} for the definition of the higher composition maps).  When $A$ is a dg algebra, $\DuMod{A}$ will be a dg category.

Cycles in $\DuMor{A}$ are called \emph{homomorphisms of type $\tD$ structures}.  Since any morphism between bounded type $\tD$ structures is bounded \cite[2.2.30]{bimod}, the subcategory of bounded type $\tD$ structures and bounded morphisms in $\DuMod{A}$ is a full $\ainf$ subcategory.  Let $\DMod{A}$ be the full $\ainf$ subcategory of left type $\tD$ structures homotopy equivalent to a bounded type $\tD$ structure.

Functors between categories of left type $\tA$ or type $\tD$ structures can be constructed using bimodules of appropriate types.  For example, suppose $\prescript{}{A}{M}^B$ is a type $\tAD$ bimodule over $(A,B)$, bounded as a type $\tD$ module.  The assignment
\begin{equation*}
\prescript{}{B}{N} \mapsto \prescript{}{A}{M}^B \boxtimes \prescript{}{B}{N}
\end{equation*}
induces functors
\begin{equation*}
\AMod{B} \to \AMod{A}
\end{equation*}
and
\begin{equation*}
\dcat{B} \to \dcat{A}.
\end{equation*}

If we were to define functors on the derived category using dg bimodules (rather than type $\AA$ or $\tD$ bimodules), we would have to use the derived tensor product.  This requires taking a cofibrant replacement of one of the factors; in practice, cofibrant replacements of finite dimensional modules are often infinite dimensional (functorial ones, always so).  As we presently explain, the category of type $\tD$ structures can be used as a model of the category of cofibrant dg modules.  Box tensor product with a type $\tDA$ bimodule takes the place of derived tensor product with a dg (or type $\tAA$) bimodule.  In our main construction, this technique will allow us to compute classes in $K_0(A)$ by dimension counting.

Given a left type $\tD$ structure $\prescript{A}{}{N}$ over a dg algebra $A$, the left box tensor product with the dg bimodule $A$ over $(A,A)$ makes $\prescript{}{A}{A}_A^{}\boxtimes\prescript{A}{}{N}$ a cofibrant left dg module over $A$.  If the resulting module is compact, then there is a corresponding class in $K_0(A)$.  In fact, up to quasi-isomorphism, we get all dg modules over $A$ in this way:

\begin{proposition}[\cite{bimod}, Proposition 2.3.18]\label{prop-A-D-equivalence} Let $A$ be a dg algebra and $\prescript{A}{}{N}$ a left type $\tD$ structure over $A$.  The $\ainf$ functors
\begin{equation*}\begin{split}
\DMod{A} & \to \AMod{A} \\
\prescript{A}{}{N} & \mapsto \prescript{}{A}{A}_A^{} \boxtimes \prescript{A}{}{N}
\end{split}\end{equation*}
and
\begin{equation*}\begin{split}
\AMod{A} & \to \DMod{A} \\
\prescript{}{A}{M} & \mapsto \prescript{A}{}{\BBar(A)}_{}^A \boxtimes \prescript{}{A}{M}
\end{split}\end{equation*}
are homotopy inverse via canonical homotopies.

They intertwine the tensor products $\boxtimes$ and $\dtensor{}$ in the sense that there is a canonical homotopy equivalence
\begin{equation*}
M_A^{} \boxtimes \prescript{A}{}{N} \simeq M_A^{} \dtensor{} ( \prescript{}{A}A_A^{} \boxtimes \prescript{A}{}{N})
\end{equation*}
for any right type $\tA$ module $M$ and left type $\tD$ module $N$ over $A$.  In particular, the categories $\DMod{A}$ and $\AMod{A}$ are quasi-equivalent.  Hence their derived categories are equivalent, and their Grothendieck groups are isomorphic.

Corresponding statements hold with left and right exchanged.
\end{proposition}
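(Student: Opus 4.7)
My plan is to prove the proposition by reducing both compositions to canonical identifications involving the bar resolution, using associativity of the box tensor product as the key bookkeeping tool.

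First, I would invoke (or establish directly from the definitions) the associativity of the box tensor product: whenever the relevant boundedness hypotheses are satisfied, there is a canonical isomorphism
\begin{equation*}
(X \boxtimes Y) \boxtimes Z \;\cong\; X \boxtimes (Y \boxtimes Z),
\end{equation*}
so that iterated box products are unambiguous up to canonical isomorphism. I would also record the special fact that $\prescript{}{A}A_A$ acts as the identity for $\boxtimes$ on the type $\tD$ side in the sense that $\prescript{}{A}A_A \boxtimes \prescript{A}{}{N}$ is a left dg module over $A$ whose underlying chain complex realizes the standard derived tensor product differential built from $\delta^1$.

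Next I would analyze the composition $\DMod{A} \to \AMod{A} \to \DMod{A}$. Starting from $\prescript{A}{}N$, we obtain
\begin{equation*}
\prescript{A}{}{\BBar(A)}^A \boxtimes \bigl(\prescript{}{A}A_A \boxtimes \prescript{A}{}N\bigr) \;\cong\; \bigl(\prescript{A}{}{\BBar(A)}^A \boxtimes \prescript{}{A}A_A\bigr) \boxtimes \prescript{A}{}N
\end{equation*}
by associativity. The key lemma, which I would either cite from the construction of $\BBar$ as a type $\tDD$ bimodule or verify by a small direct calculation, is that $\prescript{A}{}{\BBar(A)}^A \boxtimes \prescript{}{A}A_A$ is homotopy equivalent to the identity type $\tDA$ bimodule $\prescript{A}{}\mathbb{I}_A$, via the canonical augmentation and a standard contracting homotopy on the reduced bar complex. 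Box tensoring this homotopy equivalence with $\prescript{A}{}N$ yields the required natural homotopy equivalence back to $\prescript{A}{}N$.

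For the reverse composition $\AMod{A} \to \DMod{A} \to \AMod{A}$, starting from $\prescript{}{A}M$ and unfolding the two functors gives, after associativity,
\begin{equation*}
\prescript{}{A}A_A \boxtimes \prescript{A}{}{\BBar(A)}^A \boxtimes \prescript{}{A}M \;=\; \BBar(\prescript{}{A}M),
\end{equation*}
which is exactly the bar resolution reviewed in Subsection \ref{subsec:dg-algebra}. The canonical augmentation $\BBar(\prescript{}{A}M) \to \prescript{}{A}M$ is a quasi-isomorphism of dg modules, and since morphism spaces in $\AMod{A}$ are defined via the bar construction so that quasi-isomorphisms of this form are already homotopy equivalences in this category, we obtain the required natural homotopy equivalence. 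Finally, for the compatibility $M_A \boxtimes \prescript{A}{}N \simeq M_A \dtensor{A}(\prescript{}{A}A_A \boxtimes \prescript{A}{}N)$, I would unfold the derived tensor on the right as $M_A \otimes_A \BBar(\prescript{}{A}A_A \boxtimes \prescript{A}{}N)$ and apply the same associativity-plus-bar-contraction argument to collapse it to $M_A \boxtimes \prescript{A}{}N$. The statements that $\DMod{A}$ and $\AMod{A}$ are quasi-equivalent, that their derived categories are equivalent, and that their Grothendieck groups are isomorphic then follow formally from the fact that the two functors preserve the triangulated structure and are mutually inverse on the homotopy level.

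The main technical obstacle is not the strategy but the bookkeeping: one must verify that the contracting homotopy on $\prescript{A}{}{\BBar(A)}^A \boxtimes \prescript{}{A}A_A$ is genuinely a morphism of type $\tDA$ bimodules (so that box tensoring with an arbitrary $\prescript{A}{}N$ produces a natural transformation rather than merely a collection of chain homotopies), and analogously on the other side. Once these naturality checks are in place, the rest of the proof is formal.
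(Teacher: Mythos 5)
This proposition is not proved in the paper at all---it is quoted verbatim from \cite{bimod} (Proposition 2.3.18)---and your sketch reproduces essentially the argument given in that reference: associativity of $\boxtimes$, the quasi-invertibility of the bar type $\tDD$ bimodule (i.e.\ $\prescript{A}{}{\BBar(A)}^A \boxtimes \prescript{}{A}{A}_A$ being canonically homotopy equivalent to the identity type $\tDA$ bimodule), and the fact that the augmentation $\BBar(M)\to M$ is a homotopy equivalence of $\ainf$ modules over the semisimple ground ring $\bfk$. The only point you pass over is the boundedness hypotheses needed for the various box tensor products to be defined and for the contracting homotopies to converge---this is precisely why the paper takes $\DMod{A}$ to be the category of type $\tD$ structures homotopy equivalent to bounded ones---but those checks are carried out in the cited reference, so your outline is consistent with the intended proof.
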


We will use Proposition \ref{prop-A-D-equivalence} in the proof of Proposition \ref{prop-K0-tangle-floer} below.

\begin{remark*} \emph{If a dg algebra $A$ is considered as a dg category $A_c$ (objects: primitive idempotents $e\in A$; morphism complexes: primitive idempotented pieces $e'Ae$), then (bounded) type $\tD$ structures over $A$ are equivalent to one-sided twisted complexes over $A_c$.  We have chosen to use the language of type $\tD$ structures because it is more standard in the low dimensional topology literature.  For more on twisted complexes, see \cite{BondalKapranov}.}\end{remark*}

\subsection{Quantum $\gloneone$ and bases for tensor powers of $V$ and $V^*$}\label{subsec-gloneone}

 Much of the exposition of this subsection follows the development of \cite{s1}.

\subsubsection{Quantum $\gloneone$}

Let
\begin{equation}
\gloneone = \End_\C(\C^{1|1})
\end{equation}
be the Lie superalgebra of superlinear endomorphisms of a $(1,1)$ dimensional supervector space.  Its Cartan subalgebra is the supervector space
\begin{equation*}
\hfrak=\mathrm{span}(h_1,h_2)\subset\gloneone,
\end{equation*}
where $h_1$ (respectively $h_2$) is projection onto the subspace $\C^{1|0}$ (respectively $\C^{0|1}$).  Let $\ve_1,\ve_2$ be the basis vectors of $\hfrak^*$ dual to $h_1,h_2$ respectively.  We have weight and coweight lattices
\begin{equation*}
\Pb=\Z\ve_1\oplus\Z\ve_2\subset\hfrak^*,\qquad\Pb^*=\Z h_1\oplus\Z h_2\subset\hfrak,
\end{equation*}
respectively.  The unique positive root is
\begin{equation*}
\alpha=\ve_1-\ve_2.
\end{equation*}
Let $\langle\cdot,\cdot\rangle$ denote the canonical pairing between $\hfrak$ and $\hfrak^*$,
\begin{equation}
\langle h_i,\ve_j\rangle=\delta_{i,j}.
\end{equation}
We define a non-degenerate symmetric bilinear form on $\hfrak^*$ by
\begin{equation}
(\ve_i,\ve_j) = \begin{cases}
1 & \text{if }i=j=1, \\
-1 & \text{if }i=j=2, \\
0 & \text{if }i\neq j.
\end{cases}
\end{equation}

We give a $\Z/2\Z$-grading to the additive group $\Pb$ by
\begin{equation*}
p(\ve_1)=0,\quad p(\ve_2)=1,
\end{equation*}
so that $p(n_1\ve_1+n_2\ve_2)=n_2$.

The Hopf superalgebra $U_q = U_q(\gloneone)$ is defined as follows. The underlying superalgebra is generated over $\C(q)$ by $E$ (odd), $F$ (odd), and $\lbrace\qb^h:h\in\Pb^*\rbrace$ (all even) subject to the relations
\begin{equation*}\begin{split}
& \qb^0=1, \\
& \qb^h\qb^{h'}=\qb^{h+h'}\text{ for all }h,h'\in\Pb^*, \\
& \qb^h E=q^{\langle h,\alpha\rangle}E\qb^h,\text{ for all }h\in\Pb^*, \\
& \qb^h F=q^{-\langle h,\alpha\rangle}F\qb^h\text{ for all }h\in\Pb^*, \\
& E^2=F^2=0, \\
& EF+FE=\frac{K-K^{-1}}{q-q^{-1}}.
\end{split}\end{equation*}
We have used the shorthand definition $K=\qb^{h_1+h_2}$.  Since $\langle h_1+h_2,\alpha\rangle=0$, it follows that $K\in Z(U_q(\gloneone))$. 
The coproduct on $U_q(\gloneone)$ is defined by
\begin{equation*}\begin{split}
& \Delta(E)=E\otimes K^{-1}+1\otimes E, \\
& \Delta(F)=F\otimes1+K\otimes F, \\
& \Delta(\qb^h)=\qb^h\otimes\qb^h.
\end{split}\end{equation*}
The antipode of $U_q(\gloneone)$ is defined by
\begin{equation*}\begin{split}
& S(E)=-EK, \\
& S(F)=-K^{-1}F, \\
& S(\qb^h)=\qb^{-h}.
\end{split}\end{equation*}
Recall that a Hopf superalgebra $A$ over $\C(q)$ acts on a tensor product of $A$-modules $M\otimes N$  by $a\cdot (m\otimes n) = \Delta(a)\cdot (m\otimes n)$, where $A\otimes A$ acts on $M\otimes N$ by $(a_1\otimes a_2)\cdot (m\otimes n) = (-1)^{p(a_2)p(m)}(a_1\cdot m)\otimes (a_2 \cdot n)$. Also recall that $A$ acts on the dual $M^*=\Hom_{\C(q)}(M;\C(q))$ of an $A$-module $M$ by $(a\cdot \varphi)(m) = (-1)^{p(\varphi)p(a)}\varphi(S(a)\cdot m)$.

Some more notation: if $n\in\Z$, we set
\begin{equation*}
[n]_q=\frac{q^n-q^{-n}}{q-q^{-1}}\in\Z[q,q^{-1}].
\end{equation*}
For $\lambda \in \Pb$, define
\begin{equation*}
[\lambda]_q = [\langle h_1+h_2, \lambda \rangle]_q.
\end{equation*}
For a sequence $\undlam=(\lambda_1,\ldots,\lambda_n)\in\Pb^n$, set $[\undlam]_q = [\lambda_1 + \ldots + \lambda_n]_q$ and $\langle h, \undlam \rangle = \sum_j \langle h, \lambda_j \rangle$.

\subsubsection{The representations $L(\lambda)$}\label{ssec:reps}

For each $\lambda\in\hfrak^*$, let $L(\lambda)$ be a two dimensional vector space over $\C(q)$ with basis $\{v_0^\lambda,v_1^\lambda\}$.  We give this the structure of a $U_q$-module by declaring $v_0^\lambda$ and $v_1^\lambda$ to have weights $\lambda$ and $\lambda-\alpha$ and gradings $p(\lambda)$ and $p(\lambda)-1$, respectively, and setting
\begin{equation*}\begin{split}
& E(v_0^\lambda) = 0, \quad F(v_0^\lambda) = [\lambda]_q v_1^\lambda, \\
& E(v_1^\lambda) = v_0^\lambda, \quad F(v_1^\lambda) = 0, \\
& \qb^h(v_0^\lambda) = q^{\langle h,\lambda \rangle}v_0^\lambda, \quad \qb^h(v_1^\lambda) = q^{\langle h,\lambda-\alpha \rangle}v_1^\lambda.\\
\end{split}\end{equation*}
The representation $L(\lambda)$ is irreducible if and only if $\lambda$ does not annihilate $h_1+h_2$.  For a sequence $\undlam=(\lambda_1,\ldots,\lambda_n)$ of weights, let $L(\undlam) = L(\lambda_1) \otimes \ldots \otimes L(\lambda_n)$.

For a binary sequence $\unda=(a_1,\ldots,a_n)$, let
\begin{equation*}
v_{\unda} = v_{a_1}^{\lambda_1} \otimes \cdots \otimes v_{a_n}^{\lambda_n} \in L(\lambda_1) \otimes \cdots \otimes L(\lambda_n).
\end{equation*}
An example of the action of $F$ on a tensor product: on $L(\ve_1)^{\otimes3}$,
\begin{equation}\label{eg:tensor}\begin{split}
F(v_{100}) &= (F\otimes1\otimes1 + K\otimes F\otimes1 + K\otimes K\otimes F)(v_{100})\\
&= -qv_{110} -q^2v_{101}.
\end{split}\end{equation}

There is a handy exterior algebra model for $L(\undlam)$.  Let $W$ be a vector space over $\C(q)$ with basis $e_1,\ldots,e_n$.  If $\undlam\in\Pb^n$, then for each $1\leq j\leq n$, let
\begin{equation*}
\undlam_{<j}=(\lambda_1,\ldots,\lambda_{j-1}),\quad\undlam_{>j}=(\lambda_{j+1},\ldots,\lambda_n).
\end{equation*}
{and similarly let
\begin{equation*}
\undlam_{\leq j}=(\lambda_1,\ldots,\lambda_{j}),\quad\undlam_{\geq j}=(\lambda_{j},\ldots,\lambda_n).
\end{equation*}}
For $\lambda \in \Pb$, define
\begin{equation*}
q^\lambda=q^{\langle h_1+h_2,\lambda\rangle},
\end{equation*}
and for any sequence $\undlam\in\Pb^n$, let
\begin{equation*}
p(\undlam) = \sum_{j=1}^np(\lambda_j), \quad q^{\undlam} = q^{\sum_{j=1}^n\lambda_j}.
\end{equation*}

Define the vectors
\begin{equation}
\ell_E = \sum_{j=1}^n (-1)^{p(\undlam_{<j})} q^{-\undlam_{>j}} e_j, \quad
\ell_F = \sum_{j=1}^n (-1)^{p(\undlam_{<j})} q^{\undlam_{<j}} [\lambda_j]_q e_j
\end{equation}
and the operators
\begin{equation}
E = \ell_E \lrcorner, \quad F = \ell_F \wedge
\end{equation}
on $\Lambda^*W$.  Here $\lrcorner$ is the contraction operator,
\begin{equation*}
e_j\lrcorner e_k=\delta_{j,k}.
\end{equation*}
Note that
\begin{equation*}
(e_i\lrcorner)(e_j\wedge)+(e_j\wedge)(e_i\lrcorner)=\delta_{i,j}\id.
\end{equation*}
It is easy to check that
\begin{equation*}
EF+FE=[\undlam]_q\id.
\end{equation*}
In fact, it is not hard to check that we have an isomorphism of $U_q(\gloneone)$-modules
\begin{equation}\begin{split}
\Lambda^*W&\to L(\undlam)\\
e_{i_1}\wedge\cdots\wedge e_{i_k}&\mapsto v_{S(\undi)},
\end{split}\end{equation}
where $\undi=(i_1,\ldots,i_k)$, and $S(\undi)$ is the binary sequence of length $n$ with a $1$ in the $j$-th slot if and only if $j\in\undi$.

The subspace of highest weight vectors of $L(\undlam)$ (i.e., those $v\in L(\undlam)$ such that $E(v)=0$) has half the dimension of $L(\undlam)$.  For each $i=1,2,\ldots,n-1$, let
\begin{equation}
\ell_i = -(-1)^{p(\lambda_i)} e_i + q^{-\lambda_{i+1}} e_{i+1}.
\end{equation}
For $I\subseteq\{1,2,\ldots,n-1\}$ with elements $i_1<\ldots<i_k$, let
\begin{equation}
\ell_I=\ell_{i_1}\wedge\cdots\wedge \ell_{i_k}.
\end{equation}
It is easy to check that $\{\ell_I \mid I\subseteq\{1,2,\ldots,n-1\}\}$ is a basis for the space of highest weight vectors of $L(\undlam)$.  If $\langle h_1+h_2, \undlam\rangle\neq0$, then $L(\undlam)$ splits into a direct sum of two-dimensional simple modules, so that 
\begin{equation*}
\{\ell_I, \ell_F \wedge \ell_I \mid I\subseteq\{1,2,\ldots,n-1\}\}
\end{equation*}
is a basis of $L(\undlam)$.  If $\langle h, \undlam\rangle=0$, however, $F(\ell_I)$ is still a highest weight vector, so the highest weight vectors do not generate $L(\undlam)$.  In this case, $B_{nss}=\{ \ell_I, \ell_E \wedge \ell_I\}$ is a basis with several desirable properties.  This is a basis in both the semisimple and non-semisimple cases, and the matrix coefficients of $E$ and $F$ with respect to this basis admit a uniform description across all cases.

Our main construction in Section \ref{sec:tangle}, however, categorifies a different basis than $B_{nss}$ and only makes use of the semisimple case.  We now turn to the representations and bases we will categorify.

\subsubsection{The representations $V_{\PP} \otimes L(\lambda_{n+1})$}\label{subsubsec-our-rep}

For short, set $V=L(\ve_1)$ (the \emph{vector representation}) and $V^*=L(-\ve_2)$ (the \emph{dual vector representation}).  We also write $V_1$ for $V$ and  $V_{-1}$ for $V^*$.  For a sign sequence $\PP=(P_1,\ldots,P_n)\in\{\pm1\}^n$, let
\begin{equation*}
V_{\PP} = V_{P_1} \otimes V_{P_2} \otimes \cdots \otimes V_{P_n}.
\end{equation*}
 Define a weight sequence $\undlam=(\lambda_1, \ldots, \lambda_{n+1})$ by setting
\begin{equation}
\lambda_i = 
\begin{cases}
\ve_1 & \text{if }1\leq i\leq n\text{ and }\PP_i = 1, \\
-\ve_2 & \text{if }1\leq i\leq n\text{ and }\PP_i = -1,
\end{cases}
\end{equation}
and $\lambda_{n+1} = \ve_1-\sum_{i=1}^n \lambda_i$ (in fact, any weight $\lambda_{n+1}$ such that $\langle h_1+h_2, \lambda_{n+1} \rangle = 1-\sum_{i=1}^n \PP_i$ would work in this paper).  It follows that 
\begin{equation*}
[\lambda_{n+1}]_q = \left[ 1 - \sum_i \PP_i \right]_q, \quad q^{\lambda_{n+1}} = q^{1-\sum_i \PP_i }.
\end{equation*}
For the remainder of this subsection we work with the tensor product $L(\undlam) = V_{\PP} \otimes L(\lambda_{n+1})$. Since $\lambda_{n+1}$ depends on the sum of the elements in $P$, rather than on the length of $P$, we use the notation $\lambda_P$ for $\lambda_{n+1}$ in all other sections.

When regarding the formulas below, bear in mind that $L(\undlam)$ is a tensor product of $n+1$ two dimensional representations, instead of $n$ as in Section~\ref{ssec:reps}.
The representation $L(\undlam)$ has exterior-generating highest weight vectors $\ell_1,\ldots,\ell_n$ as described above:
\begin{equation*}
\ell_i = -(-1)^{p(\lambda_{i})} e_i + q^{-\lambda_{i+1}} e_{i+1}.
\end{equation*}
{Note that $\langle h_1+h_2, \lambda_{i} \rangle = \PP_i$  and $(-1)^{p(\lambda_{i})}=P_i$ for $1\leq i\leq n$  , so  we can write
\begin{equation*}
\ell_i = -\PP_i e_i + q^{-\lambda_{i+1}} e_{i+1}
\end{equation*}
and 
\begin{equation*}
\ell_E = \sum_{j=1}^{n+1} (\prod_{i=1}^{j-1}\PP_iq^{\PP_i}) q^{\lambda_j - 1} e_j, \quad
\ell_F = \sum_{j=1}^{n+1} (\prod_{i=1}^{j-1}\PP_iq^{\PP_i})  [\lambda_j]_q e_j
\end{equation*}
Let
\begin{equation}\label{eqn-defn-ell-zero}
\ell_0  =  \ell_F - \sum_{j=1}^n (\prod_{i=1}^j \PP_iq^{-\PP_i}) \ell_j .
\end{equation}
Note that 
\begin{equation*}
E(\ell_0) = E(\ell_F) = EF(1) = 1.
\end{equation*}
Using the derivation property of contraction, it follows that 
\begin{equation}\label{eqn-E-ell-zero}
E(\ell_0 \wedge \ell_I) = \ell_I
\end{equation}
for any $I \subseteq \{1,\ldots,n\}$.  We also have
\begin{equation}\label{eqn-F-ell-zero}\begin{split}
& F(\ell_I)= \ell_0 \wedge \ell_I + \sum_{j=1}^n (\prod_{i=1}^j \PP_iq^{-\PP_i}) \ell_j \wedge \ell_I , \\
& F(\ell_0 \wedge \ell_I) = - \sum_{j=1}^n (\prod_{i=1}^j \PP_iq^{-\PP_i})  \ell_0 \wedge \ell_j \wedge \ell_I .
\end{split}\end{equation}
It follows that $\{\ell_0,\ell_1,\ldots,\ell_n\}$ is a basis for $W$, so that
\begin{equation}
B = \{ \ell_I, \ell_0\wedge\ell_I \mid I \subseteq\{1,\ldots,n\} \}
\end{equation}
is a basis for $\Lambda^*W$.  We will always consider $B$ with respect to the \emph{complement reverse lexicographic order}: let $\ell_{\sss}$, $\ell_{\ttt}$ be two elements of $B$; $\sss,\ttt \subseteq \{0,\ldots,n\}$.  Let $\sss',\ttt'$ be the complements of $\sss,\ttt$ respectively.  Let $w(\sss'),w(\ttt')$ be the words in the alphabet $\{0,\ldots,n\}$ obtained respectively by reading the elements of $\sss',\ttt'$ backwards.  We say that $\ell_{\sss}$ precedes $\ell_{\ttt}$ if $w(\sss')$ precedes $w(\ttt')$ in the lexicographic (``alphabetical'') order.  A word precedes any of its initial subwords.  For example, if $n = 2$, the lexicographic order on the subsets $\sss'$ is
\begin{equation*}
\emptyset \enskip<\enskip \{0\} \enskip<\enskip \{1\} \enskip<\enskip \{0,1\} \enskip<\enskip \{2\} \enskip<\enskip \{0,2\} \enskip<\enskip \{1,2\} \enskip<\enskip \{0,1,2\}.
\end{equation*}
The induced order on $B$ is
\begin{equation*}
\ell_0 \wedge \ell_1 \wedge \ell_2 \enskip<\enskip \ell_1 \wedge \ell_2 \enskip<\enskip \ell_0 \wedge \ell_2 \enskip<\enskip \ell_2 \enskip<\enskip \ell_0 \wedge \ell_1 \enskip<\enskip \ell_1 \enskip<\enskip \ell_0 \enskip<\enskip 1.
\end{equation*}

Zhang has constructed a canonical basis for the representations $V^{\otimes n}$ ($n>0$) coming from a super Howe duality with the Hecke algebra in type $A$ \cite{Zhang}.  Since we study a different representation (except in the trivial case $P = ()$), we cannot directly compare our basis with that of \cite{Zhang}.  Zhang's basis also appears in \cite{Sartori2}, the results of which we expect to be related to ours.

An easy inductive argument computes the matrices $[E]_B,[F]_B$ of $E$ and $F$ with respect to $B$.  In the base case $n=0$, $\ell_E=\ell_F=\ell_0=e_1$, so $E(1)=e_1\lrcorner 1 = 0$, $E(l_0)= e_1\lrcorner e_1 = 1$, $F(1) = e_1=l_0$, and $F(l_0)=e_1\wedge e_1 = 0$. The ordering on the basis is $\ell_0<1$, so the matrices are
\begin{equation*}
[E]_B = \begin{pmatrix} 0 & 0 \\ 1 & 0 \end{pmatrix}, \quad 
[F]_B = \begin{pmatrix} 0 & 1 \\ 0 & 0 \end{pmatrix}.
\end{equation*}
The inductive step: let $\PP=(P_1,\ldots,P_n)$ and $\PP' = (P_1,\ldots,P_{n-1})$.  The subsets sequence for the ordered basis $B'$ for $V_{\PP'} \otimes L(\lambda_n)$ is just the first half of the corresponding sequence for the ordered basis $B$ for $V_{\PP} \otimes L(\lambda_{n+1})$. Looking at \eqref{eqn-E-ell-zero} and \eqref{eqn-F-ell-zero}, we see that $[E]_B$ and $[F]_B$ have block forms
\begin{equation}\label{eqn-E-F-matrix-induction}
[E]_B = \begin{pmatrix} [E]_{B'} & 0 \\ 0 & [E]_{B'} \end{pmatrix}, \quad 
[F]_B = \begin{pmatrix} [F]_{B'} & D_B \\ 0 & [F]_{B'} \end{pmatrix},
\end{equation}
where $D_B$ is a diagonal matrix.  For a subset $\sss\subseteq\{0,1,\ldots,n-1\}$, the $\sss,\sss\sqcup\{n\}$ matrix entry (which is in the $D_B$ part) is
\begin{equation*}
(-1)^{|\sss|} \prod_{i=1}^n \left( -(-1)^{p(\lambda_i)} q^{-\lambda_i} \right) = (-1)^{|\sss| + n + p(\undlam_{\leq n})} q^{-\undlam_{\leq n}}.
\end{equation*}

\subsubsection{Ribbon category structure}\label{subsubsec-rib}

Although $U_q$ is not a ribbon Hopf superalgebra, its module category is a ribbon category \cite{s1}.  One way of specifying this extra data is to give a functor from oriented framed tangles to the category of $U_q$-modules.  So we will need to write down maps of $U_q$-modules that are the images of ribbon twists, crossings,  caps, and cups under this functor.

We begin with crossings.  If $W_i, W_j$ are $U_q$-modules, let $\Rv=\Rv_{W_i,W_j}:W_i\otimes W_j\to W_j\otimes W_i$ be the intertwiner coming from the braiding structure, so that
\begin{equation*}\begin{split}
& (\Rv_{W_2,W_3}\otimes\id_{W_1})(\id_{W_2}\otimes\Rv_{W_1,W_3})(\Rv_{W_1,W_2}\otimes\id_{W_3}) \\
& \qquad =
(\id_{W_3}\otimes\Rv_{W_1,W_2})(\Rv_{W_1,W_3}\otimes\id_{W_2})(\id_{W_1}\otimes\Rv_{W_2,W_3}).
\end{split}\end{equation*}
On the representations $V_{\PP} \otimes L(\lambda_{n+1})$ of our main construction, we will only need to consider crossings among the factors coming from $V_{\PP}$, all of which are isomorphic to either $V$ or $V^*$.  For $1\leq i\leq n-1$, define
\begin{equation*}\begin{split}
& \Rv_i: V_{\PP} \otimes L(\lambda_{n+1}) \to V_{s_i(\PP)} \otimes L(\lambda_{n+1}), \\
& \Rv_i = \id_{V_{\PP_{<i}}} \otimes \Rv_{V_{P_i,P_{i+1}}} \otimes \id_{V_{\PP_{>(i+1)}} \otimes L(\lambda_{n+1})}.
\end{split}\end{equation*}
(Here, the simple transposition $s_i$ acts on $\{\pm1\}^n$ in the obvious way.)  We never apply the $R$-matrix that crosses $V_{P_{n}}$ with $L(\lambda_{n+1})$.  Denote the weight sequences for $V_{\PP} \otimes L(\lambda_{n+1})$ and $V_{s_i(\PP)} \otimes L(\lambda_{n+1})$ by $\undlam$ and $\undlam'$, the vector spaces for the exterior algebra models by $W$ and $W'$, the generating vectors by $e_i$ and $e_i'$ (or $\ell_i$ and $\ell_i'$), and the ordered bases from Section~\ref{subsubsec-our-rep} by $B$ and $B'$, respectively. Define a linear map $f_i:W'\to W$ by
\begin{equation}
f_i(e_j') = \begin{cases}
e_j & \text{if }j\neq i, i+1, \\
P_i P_{i+1} (1-q^{2P_{i+1}})e_i + P_i q^{P_i} e_{i+1} & \text{if }j = i, \\
P_{i+1} q^{P_{i+1}} e_i & \text{if }j = i+1.\end{cases}
\end{equation}
If we extend $f_i$ to a map from  $\Lambda^*W'$ to $\Lambda^*W$  by defining
\begin{equation}\label{eqn-f-wedge}
f_i(x \wedge y) = f_i(x) \wedge f_i(y),
\end{equation}
then the inverse of $\Rv_i$ is given by
\begin{equation}\label{eqn-R-f}
\Rv_i^{-1} = \left(P_i q^{-P_i}\right)^{\delta_{P_i,P_{i+1}}} f_i.
\end{equation}

From this description it follows that $\Rv_i$ intertwines the action of $E$ and $F$ on $V_{\PP} \otimes L(\lambda_{n+1})$ and $V_{s_i(\PP)} \otimes L(\lambda_{n+1})$.  Thus
\begin{equation}\label{eqn-R-ell-F-ell-I}
\Rv_i^{-1}(\ell_F' \wedge \ell_I') = \ell_F \wedge \Rv_i^{-1}(\ell_I'),
\end{equation}
and in particular,
\begin{equation*}
\Rv_i^{-1}(\ell_F') = \left(P_i q^{-P_i}\right)^{\delta_{P_i,P_{i+1}}} \ell_F.
\end{equation*}
It is easy to check that the action of $\Rv_i^{-1}$ on $\ell_0, \ldots, \ell_n$ is given by
\begin{equation}\label{eqn-R-ell-i}\begin{split}
\Rv_i^{-1}(\ell_j')  & = \left(P_i q^{-P_i}\right)^{\delta_{P_i,P_{i+1}}} \ell_j \quad\textrm{ if }j\neq i-1, i, i+1\\
\Rv_i^{-1}(\ell_{i-1}') & = \left(P_i q^{-P_i}\right)^{\delta_{P_i,P_{i+1}}} \ell_{i-1} + \left(P_i q^{P_i}\right)^{\delta_{P_i,-P_{i+1}}} \ell_i, \\
\Rv_i^{-1}(\ell_i') & = \left(-P_i q^{P_i}\right)^{\delta_{P_i,P_{i+1}}} \ell_i, \\
\Rv_i^{-1}(\ell_{i+1}') & = \left(P_{i+1} q^{P_{i+1}}\right)^{\delta_{P_i,-P_{i+1}}} \ell_i + \left(P_i q^{-P_i}\right)^{\delta_{P_i,P_{i+1}}} \ell_{i+1}.
\end{split}\end{equation}

Equations \eqref{eqn-R-ell-F-ell-I} and \eqref{eqn-R-ell-i} and the exterior homomorphism property \eqref{eqn-f-wedge} suffice to compute the matrix $[\Rv_i^{-1}]_{B,B'}$ (where rows correspond to the basis $B$ and columns correspond to $B'$).

The image of $\Rv_i^{-1}$ restricted to the subspace spanned by $\ell_{i-1}', \ell_i', \ell_{i+1}'$ is contained in the subspace spanned by $\ell_{i-1}, \ell_i, \ell_{i+1}$. Below, subsets of basis elements are listed in the order we get if we first sort by weight, and then use the induced ordering from Section~\ref{subsubsec-our-rep} within each weight. With columns corresponding to  $\ell_{i-1}'\wedge \ell_i'\wedge \ell_{i+1}'$, $\ell_{i}' \wedge \ell_{i+1}'$, $\ell_{i-1}' \wedge \ell_{i+1}'$, $\ell_{i-1}' \wedge \ell_{i}'$, $\ell_{i+1}'$, $\ell_i'$, $\ell_{i-1}'$, $1$ and rows corresponding to $\ell_{i-1}\wedge \ell_i\wedge \ell_{i+1}$, $\ell_{i} \wedge \ell_{i+1}$, $\ell_{i-1} \wedge \ell_{i+1}$, $\ell_{i-1} \wedge \ell_{i}$,  $\ell_{i+1}$, $\ell_i$, $\ell_{i-1}$, $1$, the submatrix for $\Rv_i^{-1}$ is

\begin{center}
\begin{tabular}{l l }
 $\left(\begin{array}{c| c c c | c c c | c} 
      -q & & & & & & & \\
            \hline
       &  -q & 1 & 0 & & & &\\
       &  0& q^{-1} &0 & & & &\\
       &  0 & 1 &-q & & & &\\
      \hline
      & & & &q^{-1} & 0 & 0 &  \\
      & & & & 1 & -q & 1&  \\
       & & & & 0 & 0 & q^{-1}&  \\
       \hline
       & & & & & & & q^{-1} 
   \end{array}\right)$ & if $(P_i, P_{i+1}) = (+, +)$,\\
   &  \\
       $\left(\begin{array}{c| c c c | c c c | c} 
      1 & & & & & & & \\
      \hline
       & 1 & q & 0 & & & &\\
       & 0& 1 &0 & & & &\\
       & 0 & -q^{-1} & 1 & & & &\\
       \hline
      & & & &1 & 0 & 0 & \\
      & & & & -q^{-1} & 1 & q & \\
       & & & & 0 & 0 & 1& \\
       \hline
       & & & & & & & 1
   \end{array}\right)$ & if $(P_i, P_{i+1}) = (+,-)$,\\
   & \\
    $\left(\begin{array}{c| c c c | c c c | c} 
      1 & & & & & & & \\
      \hline
       & 1 & -q^{-1} & 0 & & & &\\
       & 0& 1 &0 & & & & \\
       & 0 & q & 1 & & & &\\
       \hline
      & & & &1 & 0 & 0 & \\
      & & & & q & 1 & -q^{-1} &  \\
       & & & & 0 & 0 & 1& \\
       \hline
       & & & & & & & 1
   \end{array}\right)$ & if $(P_i, P_{i+1}) = (-,+)$,\\
   & \\
    $\left(\begin{array}{c| c c c | c c c | c} 
      q^{-1} & & & & & & & \\
       \hline
       &   q^{-1} & 1 & 0 & & & &\\
       &  0& -q &0 & & & &\\
       &  0 & 1 &  q^{-1} & & & &\\
       \hline
      & & & & -q  & 0 & 0 & \\
      & & & & 1 &  q^{-1} & 1& \\
      & & & & 0 & 0 & -q & \\
       \hline
       & & & & & & &  -q
   \end{array}\right)$ & if $(P_i, P_{i+1}) = (-, -)$,
\end{tabular}
\end{center}
where all omitted entries are zeros, since they correspond to incompatible pairs of weights. For $I\subseteq \{0,\ldots, n\}\setminus \{i-1, i, i+1\}$ and $J\subseteq  \{i-1, i, i+1\}$, we  have $\Rv_i^{-1}(\ell_I'\wedge \ell_J')  = \ell_I\wedge \Rv_i^{-1}(\ell_J)$. Note that $P_i$ and $P_{i+1}$ refer to the codomain of $\Rv_i^{-1}$. For example, the third matrix covers the case $(P_i, P_{i+1})=(-,+)$ and $(s_i(P_i), s_i(P_{i+1}))=(+,-)$, i.e. locally $\Rv_i^{-1}$ is applied to $V\otimes V^*$. 

The maps assigned to left-oriented caps and cups are just the canonical evaluation and coevaluation maps.  With label $V$, in the standard basis we have been using for $V$ and $V^*$, they are
\begin{equation}\begin{split}
\lcap=\ev: V^* \otimes V &\to \C(q) \\
v_{00},v_{11} &\mapsto 0, \\
v_{10} &\mapsto 1, \\
v_{01} &\mapsto -q,
\end{split}\end{equation}
\begin{equation}\begin{split}
\lcup=\coev: \C(q) &\mapsto V \otimes V^*\\
1 &\mapsto -q^{-1}v_{10} + v_{01}.
\end{split}\end{equation}
For any two representations $W_i$ and $W_j$, let $\sigma_{W_iW_j}$ be the super-flip map which takes $w_i\otimes w_j\mapsto(-1)^{p(w_i)p(w_j)}w_j\otimes w_i$.  The adjusted evaluation and coevaluation maps
\begin{equation*}
\widehat{\ev}=\ev\circ\sigma_{VV^*}, \quad \widehat{\coev}=\sigma_{VV^*}\circ\coev
\end{equation*}
are intertwiners.  To account for framing, though, we will adjust these by a ribbon element which is defined in a variant $U_\hbar$ of $U_q$ (see \cite[Section 2]{s1} for details as well as definitions of the notation used below).  Let
\begin{equation}\begin{split}
u & = \left(1+(q-q^{-1})EFK\right)e^{\hbar(H_2^2-H_1^2)}, \\
v & = K^{-1}u = uK^{-1}
\end{split}\end{equation}
in $U_\hbar$.  The elements $u$ and $v$ are both central; $v$ acts by $1$ on both $V$ and $V^*$, and $u$ acts by $q$ on $V$ and by $q^{-1}$ on $V^*$.  Define the right-oriented cap and cup maps to be $\rcap=\widehat{\ev}\circ(uv^{-1}\otimes\id)$, $\rcup=(\id\otimes vu^{-1})\circ\widehat{\coev}$.  For $V$, these are
\begin{equation}\begin{split}
\rcap: V \otimes V^* &\to \C(q) \\ 
v_{00},v_{11} & \mapsto 0, \\
v_{10} &\mapsto q^2, \\
v_{01} &\mapsto q,
\end{split}\end{equation}
\begin{equation}\begin{split}
\rcup: \C(q) &\to V^* \otimes V, \\
1 &\mapsto q^{-1}v_{10} + q^{-2}v_{01}.
\end{split}\end{equation}

As with crossings, we can define $\lcap_i$ to act as $\lcap$ on $V_{P_i} \otimes V_{P_{i+1}}$ and the identity elsewhere; and analogously for $\rcap_i,\lcup_i,\rcup_i$.  We only do this for $1 \leq i \leq n-1$ (we never cap or cup with the $L(\lambda_{n+1})$ factor).  
For a sign sequence $\PP=(P_1, \ldots, P_n)$, we compute the cap and cup maps on $V_{\PP} \otimes L(\lambda_{n+1})$ with respect to the bases of interest.  Denote the weight sequences for the codomain and domain by $\undlam$ and $\undlam'$, the vector spaces for the exterior algebra models by $W$ and $W'$, the generating vectors by $e_i$ and $e_i'$ (or $\ell_i$ and $\ell_i'$), and the ordered bases from Section~\ref{subsubsec-our-rep} by $B$ and $B'$, respectively.

The image of $\lcap_i$ (and similarly the image of $\rcap_i$) restricted to the subspace spanned by $\ell_{i-1}', \ell_i', \ell_{i+1}'$ is contained in the subspace spanned by $\ell_{i-1}$. With columns corresponding to   $\ell_{i-1}'\wedge \ell_i'\wedge \ell_{i+1}', \ell_{i}' \wedge \ell_{i+1}', \ell_{i-1}' \wedge \ell_{i+1}', \ell_{i-1}' \wedge \ell_{i}' ,  \ell_{i+1}', \ell_i', \ell_{i-1}', 1$ and rows corresponding to $\ell_{i-1}, 1$ (again we use the induced ordering from Section~\ref{subsubsec-our-rep}, but sorted by weight), the submatrices for $\lcap_i$ and $\rcap_i$ are the same, given by
\[
\left(
 \begin{array}{c| c c c | c c c | c} 
       & 0 & q & 0 & & & & \\
       \hline
       & & & & q & 0 & q & 
   \end{array}\right).
\]
For $I_1\subseteq \{0,\ldots, i-2\}$, $I_2\subseteq \{i+2, \ldots, n\}$, and $J\subseteq  \{i-1, i, i+1\}$, we  have 
\begin{align*}
\lcap_i(\ell_{I_1}'\wedge \ell_J'\wedge\ell_{I_2})  &= \ell_{I_1}\wedge \lcap_i(\ell_J)\wedge \ell_{I_2(-2)},\\
\rcap_i(\ell_{I_1}'\wedge \ell_J'\wedge\ell_{I_2})  &= \ell_{I_1}\wedge \rcap_i(\ell_J)\wedge \ell_{I_2(-2)},
\end{align*}
where $I_2(-2)$ is the set obtained from $I_2$ by subtracting $2$ from each element. 

Similarly, the image of $\lcup_i$ (and similarly the image of $\rcup_i$) restricted to the subspace spanned by $\ell_{i-1}'$ is contained in  the subspace spanned by $\ell_{i-1}, \ell_i, \ell_{i+1}$. With columns corresponding to $\ell_{i-1}', 1$  and rows corresponding to $\ell_{i-1}\wedge \ell_i\wedge \ell_{i+1}, \ell_{i} \wedge \ell_{i+1},  \ell_{i-1} \wedge \ell_{i+1}, \ell_{i-1} \wedge \ell_{i} ,  \ell_{i+1}, \ell_i, \ell_{i-1}, 1$, the submatrices for $\lcap_i$ and $\rcap_i$ are the same, given by
\[
\left(
 \begin{array}{c| c}
       & \\
       \hline
      q^{-1} & \\
      0 & \\
      q^{-1} & \\
      \hline
       & 0 \\
       & q^{-1} \\
       & 0 \\
       \hline
       & 
          \end{array}\right).
\] 
For $I_1\subseteq \{0,\ldots, i-2\}$, $I_2\subseteq \{i, \ldots, n\}$, and $J\subseteq  \{i-1\}$, we  have 
\begin{align*}
\lcup_i(\ell_{I_1}'\wedge \ell_J'\wedge_{I_2})  &= \ell_{I_1}\wedge \lcup_i(\ell_J)\wedge \ell_{I_2(2)},\\
\rcup_i(\ell_{I_1}'\wedge \ell_J'\wedge_{I_2})  &= \ell_{I_1}\wedge \rcup_i(\ell_J)\wedge \ell_{I_2(2)},
\end{align*}
where $I_2(2)$ is the set obtained from $I_2$ by adding $2$ to each element.

\subsubsection{The Alexander polynomial from $U_q(\gloneone)$}\label{subsubsec-Q}

Let $\mathcal{OTAN}$ be the monoidal category of oriented tangles.  The crossing, cap, and cup maps from Section~\ref{subsubsec-rib} give a monoidal functor
\begin{equation*}
Q: \mathcal{OTAN} \to \text{$U_q$--mod}
\end{equation*}
as follows.

\noindent
\begin{tabular}{@{} c c c c}
$Q\left(
  \mathcenter{
    \begin{tikzpicture}
      \node[inner sep=1pt] at (-.3,-.5) (bl) {};
      \node[inner sep=1pt] at (.3,-.5) (br) {};
      \node[inner sep=1pt] at (-.3,.5) (tl) {};
      \node[inner sep=1pt] at (.3,.5) (tr) {};
      \node at (0,0) (center) {};
      \draw[->, thick, red] (bl) to (tr);
      \draw[-, thick, red] (br) to (center);
      \draw[->, thick, red] (center) to (tl);
    \end{tikzpicture}
  }
  \right) =    
    \mathcenter{
     \begin{tikzpicture}
      \node[inner sep=2pt]  at (0,-.7) (b) {$V\otimes V$};
      \node[inner sep=2pt]  at (0,.7) (t) {$V\otimes V$};
      \draw[->, thick] (b) to node[left, inner sep=2pt]{$\Rv$} (t);
    \end{tikzpicture}
  }$
  &
  $Q\left(
  \mathcenter{
    \begin{tikzpicture}
      \node[inner sep=1pt] at (-.3,-.5) (bl) {};
      \node[inner sep=1pt] at (.3,-.5) (br) {};
      \node[inner sep=1pt] at (-.3,.5) (tl) {};
      \node[inner sep=1pt] at (.3,.5) (tr) {};
      \node at (0,0) (center) {};
      \draw[->, thick, red] (tr) to (bl);
      \draw[-, thick, red] (br) to (center);
      \draw[->, thick, red] (center) to (tl);
    \end{tikzpicture}
  }
  \right) =    
    \mathcenter{
     \begin{tikzpicture}
      \node[inner sep=2pt] at (0,-.7) (b) {$V^*\otimes V$};
      \node[inner sep=2pt] at (0,.7) (t) {$V\otimes V^*$};
      \draw[->, thick] (b) to node[left]{$\Rv$} (t);
    \end{tikzpicture}
  }$
&
  $Q\left(
  \mathcenter{
    \begin{tikzpicture}
      \node[inner sep=1pt] at (-.3,-.5) (bl) {};
      \node[inner sep=1pt] at (.3,-.5) (br) {};
      \node[inner sep=1pt] at (-.3,.5) (tl) {};
      \node[inner sep=1pt] at (.3,.5) (tr) {};
      \node at (0,0) (center) {};
      \draw[->, thick, red] (bl) to (tr);
      \draw[->, thick, red] (center) to (br);
      \draw[-, thick, red] (tl) to (center);
    \end{tikzpicture}
  }
  \right) =    
    \mathcenter{
     \begin{tikzpicture}
      \node[inner sep=2pt] at (0,-.7) (b) {$V^*\otimes V^*$};
      \node[inner sep=2pt] at (0,.7) (t) {$V^*\otimes V^*$};
      \draw[->, thick] (b) to node[left]{$\Rv$} (t);
    \end{tikzpicture}
  }$
  &
  $Q\left(
  \mathcenter{
    \begin{tikzpicture}
      \node[inner sep=1pt] at (-.3,-.5) (bl) {};
      \node[inner sep=1pt] at (.3,-.5) (br) {};
      \node[inner sep=1pt] at (-.3,.5) (tl) {};
      \node[inner sep=1pt] at (.3,.5) (tr) {};
      \node at (0,0) (center) {};
      \draw[->, thick, red] (tr) to (bl);
      \draw[->, thick, red] (center) to (br);
      \draw[-, thick, red] (tl) to (center);
    \end{tikzpicture}
  }
  \right) =    
    \mathcenter{
     \begin{tikzpicture}
      \node[inner sep=2pt] at (0,-.7) (b) {$V\otimes V^*$};
      \node[inner sep=2pt] at (0,.7) (t) {$V^*\otimes V$};
      \draw[->, thick] (b) to node[left]{$\Rv$} (t);
    \end{tikzpicture}
  }$\\
  & & & \\
    $Q\left(
  \mathcenter{
    \begin{tikzpicture}
      \node[inner sep=1pt] at (-.3,-.5) (bl) {};
      \node[inner sep=1pt] at (.3,-.5) (br) {};
      \node[inner sep=1pt] at (-.3,.5) (tl) {};
      \node[inner sep=1pt] at (.3,.5) (tr) {};
      \node at (0,0) (center) {};
      \draw[->, thick, red] (br) to (tl);
      \draw[-, thick, red] (bl) to (center);
      \draw[->, thick, red] (center) to (tr);
    \end{tikzpicture}
  }
  \right) =    
    \mathcenter{
     \begin{tikzpicture}
      \node[inner sep=2pt] at (0,-.7) (b) {$V\otimes V$};
      \node[inner sep=2pt] at (0,.7) (t) {$V\otimes V$};
      \draw[->, thick] (b) to node[left, inner sep=.5pt]{$\Rv^{-1}$} (t);
    \end{tikzpicture}
  }$
  &
    $Q\left(
  \mathcenter{
    \begin{tikzpicture}
      \node[inner sep=1pt] at (-.3,-.5) (bl) {};
      \node[inner sep=1pt] at (.3,-.5) (br) {};
      \node[inner sep=1pt] at (-.3,.5) (tl) {};
      \node[inner sep=1pt] at (.3,.5) (tr) {};
      \node at (0,0) (center) {};
      \draw[->, thick, red] (br) to (tl);
      \draw[->, thick, red] (center) to (bl);
      \draw[-, thick, red] (center) to (tr);
    \end{tikzpicture}
  }
  \right) =    
    \mathcenter{
     \begin{tikzpicture}
      \node[inner sep=2pt] at (0,-.7) (b) {$V^*\otimes V$};
      \node[inner sep=2pt] at (0,.7) (t) {$V\otimes V^*$};
      \draw[->, thick] (b) to node[left, inner sep=.5pt]{$\Rv^{-1}$} (t);
    \end{tikzpicture}
  }$
&
    $Q\left(
  \mathcenter{
    \begin{tikzpicture}
      \node[inner sep=1pt] at (-.3,-.5) (bl) {};
      \node[inner sep=1pt] at (.3,-.5) (br) {};
      \node[inner sep=1pt] at (-.3,.5) (tl) {};
      \node[inner sep=1pt] at (.3,.5) (tr) {};
      \node at (0,0) (center) {};
      \draw[->, thick, red] (tl) to (br);
      \draw[-, thick, red] (bl) to (center);
      \draw[->, thick, red] (center) to (tr);
    \end{tikzpicture}
  }
  \right) =    
    \mathcenter{
     \begin{tikzpicture}
      \node[inner sep=2pt] at (0,-.7) (b) {$V\otimes V^*$};
      \node[inner sep=2pt] at (0,.7) (t) {$V^*\otimes V$};
      \draw[->, thick] (b) to node[left, inner sep=.5pt]{$\Rv^{-1}$} (t);
    \end{tikzpicture}
  }$
  &
    $Q\left(
  \mathcenter{
    \begin{tikzpicture}
      \node[inner sep=1pt] at (-.3,-.5) (bl) {};
      \node[inner sep=1pt] at (.3,-.5) (br) {};
      \node[inner sep=1pt] at (-.3,.5) (tl) {};
      \node[inner sep=1pt] at (.3,.5) (tr) {};
      \node at (0,0) (center) {};
      \draw[->, thick, red] (tl) to (br);
      \draw[->, thick, red] (center) to (bl);
      \draw[-, thick, red] (center) to (tr);
    \end{tikzpicture}
  }
  \right) =    
    \mathcenter{
     \begin{tikzpicture}
      \node[inner sep=2pt] at (0,-.7) (b) {$V^*\otimes V^*$};
      \node[inner sep=2pt] at (0,.7) (t) {$V^*\otimes V^*$};
      \draw[->, thick] (b) to node[left, inner sep=.5pt]{$\Rv^{-1}$} (t);
    \end{tikzpicture}
  }$\\
  & & & \\
  $Q\left(
  \mathcenter{
    \begin{tikzpicture}
      \node[inner sep=1pt] at (-.25,.5) (l) {};
      \node[inner sep=1pt] at (.25,.5) (r) {};
      \draw[->, bend left=90, looseness=3.5, thick, red] (r) to (l);
    \end{tikzpicture}
  }
  \right) =    
    \mathcenter{
     \begin{tikzpicture}
      \node[inner sep=2pt] at (0,-.7) (b) {$\C(q)$};
      \node[inner sep=2pt] at (0,.7) (t) {$V\otimes V^*$};
      \draw[->, thick] (b) to node[left, inner sep=1pt]{$\lcup$} (t);
    \end{tikzpicture}
  }$
& 
  $Q\left(
  \mathcenter{
    \begin{tikzpicture}
      \node[inner sep=1pt] at (-.25,.5) (l) {};
      \node[inner sep=1pt] at (.25,.5) (r) {};
      \draw[->, bend right=90, looseness=3.5, thick, red] (l) to (r);
    \end{tikzpicture}
  }
  \right) =    
    \mathcenter{
     \begin{tikzpicture}
      \node[inner sep=2pt] at (0,-.7) (b) {$\C(q)$};
      \node[inner sep=2pt] at (0,.7) (t) {$V^*\otimes V$};
      \draw[->, thick] (b) to node[left, inner sep=1pt]{$\rcup$} (t);
    \end{tikzpicture}
  }$
  &
    $Q\left(
  \mathcenter{
    \begin{tikzpicture}
      \node[inner sep=1pt] at (-.25,-.5) (l) {};
      \node[inner sep=1pt] at (.25,-.5) (r) {};
      \draw[->, bend right=90, looseness=3.5, thick, red] (r) to (l);
    \end{tikzpicture}
  }
  \right) =    
    \mathcenter{
     \begin{tikzpicture}
      \node[inner sep=2pt] at (0,-.7) (b) {$V^*\otimes V$};
      \node[inner sep=2pt] at (0,.7) (t) {$\C(q)$};
      \draw[->, thick] (b) to node[left, inner sep=1pt]{$\lcap$} (t);
    \end{tikzpicture}
  }$
   &
     $Q\left(
  \mathcenter{
    \begin{tikzpicture}
      \node[inner sep=1pt] at (-.25,-.5) (l) {};
      \node[inner sep=1pt] at (.25,-.5) (r) {};
      \draw[->, bend left=90, looseness=3.5, thick, red] (l) to (r);
    \end{tikzpicture}
  }
  \right) =    
    \mathcenter{
     \begin{tikzpicture}
      \node[inner sep=2pt] at (0,-.7) (b) {$V\otimes V^*$};
      \node[inner sep=2pt] at (0,.7) (t) {$\C(q)$};
      \draw[->, thick] (b) to node[left, inner sep=1pt]{$\rcap$} (t);
    \end{tikzpicture}
  }$
  \\
    & & & \\
  $Q\left(\hspace{2pt}
  \mathcenter{
    \begin{tikzpicture}
      \node[inner sep=1pt] at (0,-.5) (b) {};
      \node[inner sep=1pt] at (0,.5) (t) {};
      \draw[->, thick, red] (b) to (t);
    \end{tikzpicture}
  }
  \hspace{2pt}
  \right) =    
    \mathcenter{
     \begin{tikzpicture}
      \node at (0,-.7) (b) {$V$};
      \node at (0,.7) (t) {$V$};
      \draw[->, thick] (b) to node[left]{$\id$} (t);
    \end{tikzpicture}
  }$
& 
  $Q\left(
  \hspace{2pt}
  \mathcenter{
    \begin{tikzpicture}
      \node[inner sep=1pt]  at (0,-.5) (b) {};
      \node[inner sep=1pt]  at (0,.5) (t) {};
      \draw[->, thick, red] (t) to (b);
    \end{tikzpicture}
  }
  \hspace{2pt}
  \right) =    
    \mathcenter{
     \begin{tikzpicture}
      \node at (0,-.7) (b) {$V^*$};
      \node at (0,.7) (t) {$V^*$};
      \draw[->, thick] (b) to node[left]{$\id$} (t);
    \end{tikzpicture}
  }$ &
& 
  \end{tabular}

Applying this to a closed link, we get an element of the ground field $\C(q)$.  (The framing is irrelevant because with label $V$ on all strands, the ribbon element acts as $1$.)

\begin{figure}[h]
\centering
     \labellist
        \pinlabel \textcolor{red}{$L$} at 25 50
        \pinlabel \textcolor{red}{$T_L$} at 202 50
        \pinlabel $=$ at 115 50
     \endlabellist
	\includegraphics[scale=.75]{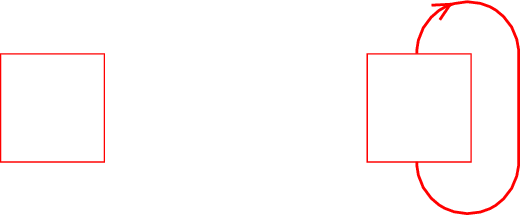}
      \vskip .2 cm
       \caption{Left: A link $L$. Right: The same link $L$ seen as the closure of a $(1,1)$-tangle $T_L$.}
       \label{fig:rt-alex}
\end{figure}

This invariant vanishes on any closed link diagram.  So to get an interesting invariant, we must modify this construction.  It turns out that if we cut, say, the topmost strand of a link $L$ at some horizontal coordinate and isotope it to get a $(1,1)$-tangle (see Figure~\ref{fig:rt-alex}), then the value of $Q$ on the resulting $(1,1)$-tangle $T_L$ is an invariant of the original link.  That is, this assignment is unchanged if we apply Reidemeister moves at the beginning and cut a different strand.  See \cite[Proposition 4.4, Proposition 4.5, Theorem 4.6]{s1} or \cite{v1}.
It is easy to show that
\begin{equation*}
\Rv=\Rv^{-1}+(q-q^{-1})\id,
\end{equation*}
which matches the skein relation
\begin{equation*}
\Delta\left(
  \mathcenter{
    \begin{tikzpicture}
      \node at (-.5,-.5) (bl) {};
      \node at (.5,-.5) (br) {};
      \node at (-.5,.5) (tl) {};
      \node at (.5,.5) (tr) {};
      \node at (0,0) (center) {};
      \draw[->, thick, red] (bl) to (tr);
      \draw[-, thick, red] (br) to (center);
      \draw[->, thick, red] (center) to (tl);
      \draw[dashed] (0,0) circle (.52);
    \end{tikzpicture}
  }
\right)-\Delta\left( 
\mathcenter{
    \begin{tikzpicture}
      \node at (-.5,-.5) (bl) {};
      \node at (.5,-.5) (br) {};
      \node at (-.5,.5) (tl) {};
      \node at (.5,.5) (tr) {};
      \node at (0,0) (center) {};
      \draw[->, thick, red] (br) to (tl);
      \draw[-, thick, red] (bl) to (center);
      \draw[->, thick, red] (center) to (tr);
      \draw[dashed] (0,0) circle (.52);
    \end{tikzpicture}
  }
\right)=(t^{1/2}-t^{-1/2})\Delta\left(
 \mathcenter{
    \begin{tikzpicture}
      \node at (-.5,-.5) (bl) {};
      \node at (.5,-.5) (br) {};
      \node at (-.5,.5) (tl) {};
      \node at (.5,.5) (tr) {};
      \draw[al, red] (bl) to (tl);
      \draw[ar, red] (br) to (tr);
      \draw[dashed] (0,0) circle (.52);
    \end{tikzpicture}
  }
\right)
\end{equation*}
for the Alexander-Conway polynomial if we let $q^2=t$.  Since $Q(T_-)$ sends the unknot to $\id_V$ and $\Delta$ sends the unknot to $1$, it follows that 
\begin{equation*}
Q(T_L) = \Delta(L)\id_V.
\end{equation*}

Note that since there is no monoidal structure on tangle Floer homology, here we think of $Q$ as the regular functor coming from the maps  $\Rv_i$, $\lcap_i$, $\rcap_i$, $\lcup_i$, and $\rcup_i$.




\section{Tangle Floer homology} 
\label{sec:tangle}


In this section, we give a brief review of Tangle Floer homology; a similar summary can be found in \cite{pv2}.
 An  \emph{ $(m,n)$-tangle} $\T$ is a proper, smoothly embedded oriented 1--manifold in  $[t_0,t_1]\times \R^2$, with boundary $\bdy \T = \bdy^0\T\sqcup \bdy^1\T$, where $\bdy^0\T = \{t_0\}\times\{\frac 1 2, \ldots, m - \frac 1 2\}\times \{0\}$ and $\bdy^1\T = \{t_1\}\times\{\frac 1 2, \ldots, n - \frac 1 2\}\times \{0\}$, treated as oriented sequences of points.  A planar diagram of a tangle is a projection to $[t_0,t_1]\times\R\times\{0\}$ with no triple intersections, self-tangencies, or cusps, and with over- and under-crossing data preserved (as viewed from the positive $z$ direction).  The boundaries of $\T$ can be thought of as \emph{sign sequences}
\begin{equation*}
-\bdy^0 \T \in \{\pm1\}^m, \bdy^1\T \in \{\pm1\}^n,
\end{equation*}
according to the orientation of the tangle at each point ($+$ if the tangle is oriented left-to-right, $-$ if the tangle is oriented right-to-left).

\begin{figure}[h]
\centering
 \includegraphics[scale=1.2]{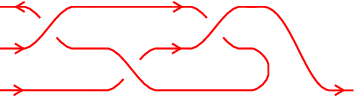} 
      \vskip .2 cm
       \caption{A projection of a $(3,1)$-tangle $\T$ to $I\times \R$. Here $-\partial^0\T=(+,+, -)$ and $\partial^1\T=(+)$.}\label{fig:tangle}
\end{figure}

In \cite{pv}, the last two authors defined: 
\begin{itemize}
  \item a dg algebra $\AA(\PP)$ for any sign sequence $\PP\in \{\pm1\}^n$ and
  \item a type $\tDA$ bimodule $\ctt(\T)$ over $(\AA(-\bdy^0\T)$, $\AA(\bdy^1\T))$ (defined up to homotopy equivalence and tensor-factors of $\Ft\oplus\Ft[1]\{2\}$).
\end{itemize}

In this section, we will give a combinatorial description of $\AA(\PP)$ and $\ctt(\T)$.  We start by describing $\AA(\PP)$ and a certain subalgebra $\II(\PP)$ in Subsection \ref{ssec:alg}.  The subalgebra $\II(\PP)$ will serve as our ground ring.  After a digression in Subsection \ref{ssec:dec} on elementary decompositions of tangles, we describe in Subsection \ref{ssec:gen} a homogeneous $\Ft$-basis for $\ctt(\T)$.  In Subsection \ref{ssec:DAmodule}, we endow $\ctt(\T)$ with the  structure of a $\tDA$ bimodule.  Subsection \ref{ssec:CTthms} is a summary of the main results from \cite{pv} about invariance, pairing, and the relation to knot Floer homology.  Lastly, Subsection \ref{ssec:dictionary} compares the notation used here with that of \cite{pv}.

\subsection{The dg algebra of a sign sequence}\label{ssec:alg}

Let $\PP\in\{\pm1\}^n$ be a sign sequence and let $[n]=\{0,1,\ldots, n\}$.

\begin{definition} A \emph{generator} associated to $\PP$ is a partial bijection $[n]\to[n]$ (that is, a bijection from a subset $\sss\subseteq[n]$ to a possibly different subset $\ttt\subseteq[n]$).  If $x$ is a generator which is the identity function on some subset of $[n]$, we say $x$ is an \emph{idempotent generator}.  The \emph{weight} $|x|$ of a generator $x$ is the number of elements in the subset on which it is defined.\end{definition}

Let $x:[n]\to[n]$ be a generator for $\PP$ with underlying bijection $\sss\to\ttt$.  The diagram associated to $x$ is drawn in $[0,1]\times[0,n]$ as follows:
\begin{itemize}
  \item draw red strands from $(0,i-\frac{1}{2})$ to $(1,i-\frac{1}{2})$ for $i=1,2,\ldots,n$;
  \item orient the red strands according to the sign sequence $\PP$ (right for $+$, left for $-$);
  \item draw black dots at the $2(n+1)$ points $\{0,1\}\times[n]$;
  \item draw a black strand from $j$ to $x(j)$ for each $j\in\sss$, such that:
  \begin{itemize}
    \item black strands have no critical points with respect to the horizontal coordinate (``don't turn back''),
    \item there are no triple intersection points among red and/or black strands, and
    \item there are a minimal number of intersection points between strands (subject to the above conditions).
  \end{itemize}
\end{itemize}
Up to the evident notion of equivalence fixing boundaries (allowing ambient isotopies and Reidemeister III moves among black and red strands), there is exactly one such diagram for each generator $x$.

\begin{figure}[h]
\centering
 \includegraphics[scale=.8]{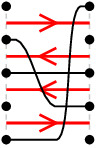} 
      \vskip .2 cm
       \caption{The diagram of the generator of $\AA(+,-,-,+)$ of weight $3$ which sends $0\mapsto4$, $2\mapsto2$, and $3\mapsto1$.}\label{fig:alggen}
\end{figure}

\begin{definition} Let $\AA(\PP)$ be the $\Ft$-span of all generators for $\PP$, and let $\II(\PP)$ be the subspace of all idempotent generators for $\PP$. Below, we will give both of these the structure of bigraded algebras and $\AA(\PP)$ the structure of a dg algebra over $\II(\PP)$.\end{definition}

We define the \emph{Alexander grading} $A(x)$ and \emph{Maslov grading} $M(x)$ of a generator $x$ as follows:
\begin{eqnarray*}
2A(x) &=& \diagup \hspace{-.375cm}{\color{red}{\nwarrow}}(x) + \diagdown \hspace{-.375cm}{\color{red}{\swarrow}}(x)
-\diagup \hspace{-.37cm}{\color{red}{\searrow}}(x) - \diagdown \hspace{-.37cm}{\color{red}{\nearrow}}(x),\\
M(x) & =& \diagup \hspace{-.35cm}\diagdown(x) - \diagup \hspace{-.37cm}{\color{red}{\searrow}}(x) -\diagdown \hspace{-.37cm}{\color{red}{\nearrow}}(x).
\end{eqnarray*}
Here, $\diagup \hspace{-.375cm}{\color{red}{\nwarrow}}(x)$ means the number of crossings between a left-oriented red strand and a black strand passing from below-left to above-right with respect to that red strand; and analogously for the other terms.  The Maslov grading will be the homological grading and $2$ times the Alexander grading will be the internal grading.

Let $x$ and $y$ be two generators for $\PP$ with underlying bijections $\sss_1\to\ttt_1$ and $\sss_2\to\ttt_2$ respectively.  If $\ttt_1\neq\sss_2$, define their product in $\AA(\PP)$ to be $0$.  If $\ttt_1=\sss_2$, consider the diagram obtained from diagrams for $x$ and $y$ by concatenating them with $x$ on the left, $y$ on the right.  If the resulting diagram has a minimal number of crossings, then define $xy=y\circ x$, so that the diagram of the product $xy$ is obtained from the concatenated diagram by horizontal scaling by $\frac{1}{2}$.  If not, then the product is defined to be $0$.  See Figure \ref{fig:multiplication} for examples.

\begin{figure}[h]
\centering
   \includegraphics[scale=0.8]{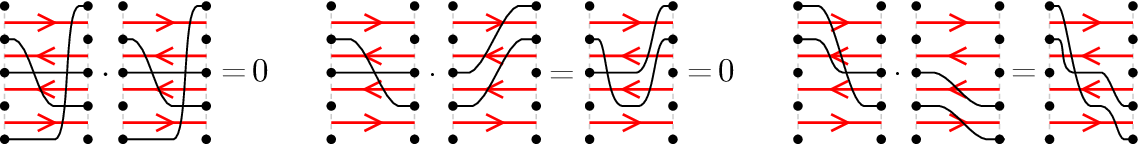}
         \vskip .2 cm
         \caption{Examples of the multiplication. Left: the two diagrams cannot be concatenated. Middle: the concatenation does not have a minimal number of crossings. Right: the product is the concatenation followed by horizontal scaling by $\frac{1}{2}$.}\label{fig:multiplication}
\end{figure}

It is clear that $\II(\PP)\cong\Ft^{\oplus2^{n+1}}$, so $\II(\PP)$ is a suitable ground ring of the form discussed in Section \ref{sec:preliminaries}.

Given a black-black crossing in a diagram for a generator $x$, there is (up to the evident equivalence) a unique picture in which that crossing is locally resolved, as in Figure \ref{fig:resolution}.  The differential on $\AA(\PP)$ sends a generator $x$ to the sum of all generators $y$ whose diagrams can be obtained from that of $x$ by resolving one crossing \emph{in such a way that the result has a minimal number of crossings}.  Figure \ref{fig:diffalg} computes the differential of the generator of Figure \ref{fig:alggen}.

\begin{figure}[h]
\centering
\includegraphics[scale = .8]{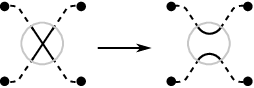} 
         \caption{The resolution of a crossing. The diagram is unchanged outside the grey circle.}\label{fig:resolution}
\end{figure}

\begin{figure}[h]
\centering
  \labellist
         \pinlabel  $\partial$ at 64 57
       \endlabellist
\includegraphics[scale = .8]{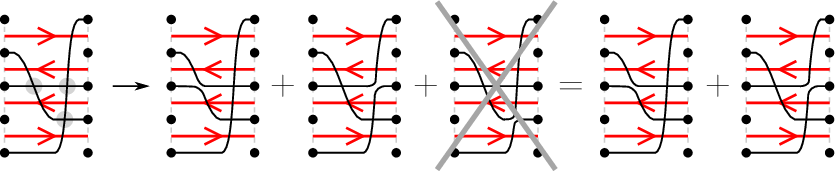} 
         \caption{The differential of a generator. The three diagrams after the arrow are those obtained from the leftmost diagram by resolving a crossing. The last one does not have minimal intersection.}\label{fig:diffalg}
\end{figure}

The following lemma is an easy consequence of the definitions:
\begin{lemma}[\cite{pv}, Theorem 3.9]
 $\AA(\PP)$ is a differential graded algebra over the graded algebra $\II(\PP)$ with respect to the Maslov grading.  The Alexander grading is preserved by the differential and the multiplication.  The primitive idempotents are precisely the idempotent generators.\qed
\end{lemma}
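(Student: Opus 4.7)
The plan is to verify, essentially directly from the combinatorial definitions, each of the claims: associativity of the product on $\AA(\PP)$, the Leibniz rule, $d^2=0$, preservation of gradings, and identification of the primitive idempotents. I would organize the proof along the line: first establish that $\II(\PP)\subset\AA(\PP)$ is a subalgebra with the stated structure, then deduce the module/algebra properties of $\AA(\PP)$, and finally examine how the differential interacts with both.

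For the algebra structure, I would first note that an idempotent generator, whose diagram consists of horizontal black strands at a subset of heights $\sss\subseteq[n]$ together with the red background, has no crossings, so its self-concatenation is trivially minimal and gives back the same diagram. This shows that each $\ee{\sss}$ is an idempotent, that distinct $\ee{\sss}$ are orthogonal (the underlying subsets have to match for the concatenation to be nonzero), and that $\sum_{\sss}\ee{\sss}$ is a unit. Primitivity of each $\ee{\sss}$ is then a dimension count in $\ee{\sss}\AA(\PP)\ee{\sss}$, where the only generator is $\ee{\sss}$ itself; so the $\ee{\sss}$ are precisely the primitive idempotents. For associativity, I would observe that the product $xy$ either vanishes or equals the horizontal concatenation re-scaled, and that horizontal concatenation is evidently associative; the only subtlety is the minimality condition, which I would handle by arguing that $xyz$ has minimal intersection number if and only if both $xy$ and $yz$ do. This reduces to the combinatorial fact that two black strands cross in a concatenation precisely when their partial bijections are arranged in a certain ``inverted'' pattern relative to the red strands, and inversion counts add under composition.

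For the gradings, I would show that $A$ and $M$ are additive under multiplication by counting crossings in the concatenated diagram: each crossing contribution appears either in $x$, in $y$, or at the interface between them, and the crossings at the interface are exactly the crossings that would be created; the minimality condition guarantees no cancellations. For the differential, the heart is checking $d^2=0$ and the Leibniz rule. I would do this by analyzing resolutions of pairs of black-black crossings in a diagram. Pairs of crossings that can both be resolved while preserving minimality pair up naturally: for any two such crossings, the two orders of resolution both produce the same terminal diagram (or both become non-minimal in the same way), giving a cancellation in $d^2$ over $\Ft$. The Leibniz rule $d(xy)=(dx)y+x(dy)$ similarly follows from classifying crossings in the concatenated diagram of $xy$ as either crossings belonging to $x$, belonging to $y$, or crossings between a strand of $x$ and a strand of $y$; resolution of an internal crossing corresponds to the two Leibniz terms, while resolution of an interface crossing would decrease the total number of interface crossings and hence violate minimality (giving zero on both sides). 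That the differential has Maslov degree $-1$ and preserves Alexander degree is read off directly from the local picture of resolving a crossing, Figure~\ref{fig:resolution}.

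The main obstacle I expect is the bookkeeping around the minimality condition: both in the definition of multiplication (one needs the concatenation to be minimal for the product to be nonzero) and in the differential (one only resolves crossings when the result remains minimal). Getting $d^2=0$ and Leibniz to work cleanly requires showing that these minimality conditions behave compatibly under simultaneous resolution and concatenation. The argument should come down to a careful local case analysis at pairs of crossings, together with the observation that a diagram with black strands that have no horizontal critical points has a canonical minimal representative determined by the partial bijection, so ``minimality'' is a property of the generator, not of a chosen diagram. Once that rigidity is in hand, the identities $d^2=0$ and $d(xy)=(dx)y+x(dy)$ follow from a straightforward sign-free (we are over $\Ft$) pairing of terms.
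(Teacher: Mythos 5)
The paper itself gives no argument for this lemma beyond citing \cite{pv} and declaring it an easy consequence of the definitions, so your plan of verifying everything directly from the strand-diagram combinatorics is exactly the intended route, and most of your outline (orthogonal idempotents, primitivity by a dimension count in $\ee{\sss}\AA(\PP)\ee{\sss}$, additivity of $A$ and $M$ under concatenation, $d^2=0$ and Leibniz by pairing crossing resolutions over $\Ft$) is the right skeleton.

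Two points in your sketch are stated incorrectly, one of which would need repair before the argument is sound. First, the reduction ``$xyz$ has minimal intersection number if and only if both $xy$ and $yz$ do'' fails in the converse direction: two black strands can cross once in the $x$-part and once in the $z$-part while remaining parallel in the $y$-part, so that both pairwise concatenations are minimal but the triple one is not (and both triple products are in fact $0$). The correct statement, which is what actually yields associativity, is that $(xy)z\neq 0$ iff $x(yz)\neq 0$ iff no pair of black strands crosses more than once in the triple concatenation, in which case both equal $z\circ y\circ x$. Second, in your Leibniz classification the third case is vacuous: in the concatenated diagram every crossing lies entirely in the $x$-half or the $y$-half, so there are no ``interface'' crossings between a strand of $x$ and a strand of $y$; the genuine content is that resolving a crossing in the $x$-half of the product diagram preserves minimality of the whole diagram precisely when the corresponding term of $(dx)\cdot y$ is nonzero (and symmetrically for $y$), which is what makes the two sides match term by term. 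Relatedly, for $d^2=0$ the pairing ``same two crossings, two orders'' is not quite sufficient when the two crossings share a strand: one order of resolution can pass through a non-minimal intermediate while the other does not, and such terms must be checked to cancel (the matching in these three-strand configurations is not simply order reversal). This is exactly the ``careful local case analysis'' you defer, and it is the only genuinely fiddly part of an otherwise routine verification.
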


For a subset $\sss\subseteq[n]$, we write $\ee{\sss}$ for the corresponding primitive idempotent.  If $x:[n]\to[n]$ is a generator defined on $\sss$ with $\ttt=x(\sss)$, let
\begin{equation*}
\sssA{0}(x)=\sss, \quad \sssD{0}=[n]\setminus\sss, \quad \sssA{1}(x)=\ttt, \quad \sssD{1}(x)=[n]\setminus\ttt.
\end{equation*}
For short, we define
\begin{equation*}
\eeA{i}(x) = \ee{\sssA{i}(x)}, \quad \eeD{i}(x) = \ee{\sssD{i}(x)}
\end{equation*}
for $i=0,1$, so that $x$ is an element of the idempotented piece $\eeA{0}(x)\AA(\PP)\eeA{1}(x)$.

Note that
\begin{equation*}
\AA(\PP) = \bigoplus_{\ell=0}^{n} \AA_\ell(\PP)
\end{equation*}
as dg algebras, where $\AA_\ell(\PP)$ is the subspace spanned by all generators of weight $\ell$.

\subsection{Decompositions of tangles}\label{ssec:dec} 

The notion of decomposing a tangle into elementary pieces is standard, and we don't make use of any non-standard results in this direction.  For convenience, we will insist on arranging these rather specifically, as we presently explain.

\begin{definition}\label{def:elemtangle}
An \emph{elementary tangle} is a tangle of one of the following five types:
\renewcommand{\theenumi}{\roman{enumi}}
\begin{enumerate}
\item\label{it:triv} an $(n,n)$-tangle consisting of only straight strands is \emph{trivial};
\item\label{it:cap} an $(n+1,n-1)$-tangle consisting of a single cup and straight strands is a \emph{cup};
\item\label{it:cup} an $(n-1,n+1)$-tangle consisting of a single cap and straight strands is a \emph{cap};
\item\label{it:pos} an $(n,n)$-tangle consisting of straight strands and a single crossing where the strand with the higher slope is over the strand with the lower slope is an \emph{\texttt{e}-crossing};
\item\label{it:neg} and an $(n,n)$-tangle consisting of straight strands and a single crossing where the strand with the higher slope is under the strand with the lower slope is an \emph{\texttt{o}-crossing}. 
\end{enumerate}
In all of the above cases we assume $n>0$. 
\end{definition}

The choice of ``cup" an ``cap" terminology is made to match our view (for this paper) of tangles as running right-to-left, opposite to the indexing in the decompositions below. See for example the direction of the map $[\ctt]$ in Figure~\ref{fig:RTtangle}.

Any $(m,n)$-tangle diagram $\T$ admits a decomposition $\Tdec=(\T_1,\ldots,\T_k)$ such that $\T=\T_1\circ\cdots\circ \T_k$ with each $\T_i$ an elementary $(n_{i-1},n_i)$-tangle. 
By an ``inactive strand" of an elementary tangle we mean any strand other than one of the two crossing strands in a crossing, or the strand which forms the semi-circle part in a cap or a cup.

After possibly performing ambient isotopy rel boundary, we can and will always assume the following:
\begin{itemize}
  \item $\T_i\subset [i-1, i]\times \R$, $\bdy^0\T_i = \{i-1\} \times \{\frac 1 2, \ldots, n_{i-1} - \frac 1 2\}$, $\bdy^1\T_i = \{i\} \times  \{\frac 1 2, \ldots, n_{i} - \frac 1 2\}$ for $i=1,2,\ldots,k$;
  \end{itemize}
We will refer to the pieces of the diagram lying above intervals of form $[i, i+\frac 1 2]$ as the \emph{odd pieces/halves}, and the pieces lying above intervals of form $[i+\frac 1 2, i+1]$ as the \emph{even pieces/halves}. We further assume:
\begin{itemize}
  \item if $\T_i$ is an \texttt{e}-crossing (respectively  an \texttt{o}-crossing), then all inactive strands are horizontal, and the two strands that cross do so in the even piece (respectively odd piece), and are horizontal in the other piece, whence the terms \texttt{e}-crossing and \texttt{o}-crossing;
  \item if $\T_i$ is a cup (respectively cap), then the curved strand is a  left-opening (respectively right-opening) semi-circle of radius $\frac{1}{2}$ centered at $\{i-1\}\times\{r\}$ (respectively $\{i\}\times \{r\}$) for some non-negative integer $r$. In particular, cups occur in odd halves, caps occur in even halves. Strands below the ``curved" strand are horizontal; strands above the curved strand are horizontal in the odd (respectively even) half, then connect to points $2$ lower on the right (respectively left) boundary of $\T_i$.
\end{itemize}
A cap or a cup whose semi-circle is centered at $\{i\}\times\{r\}$ is said to be ``at height $r$.'' Note that our cup/cap terminology is consistent with ``reading tangles right-to-left''. 

Figure \ref{fig:tdec} is an example of a decomposition of the form we will consider.

\begin{figure}[h]
\centering
    \includegraphics[scale=.8]{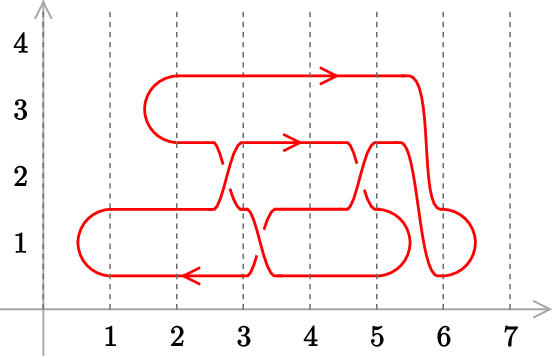} 
      \vskip .2 cm
       \caption{The trefoil, decomposed as $\Tdec = (\T_1, \ldots, \T_7)$. The elementary tangles $\T_1$ and $\T_2$ are examples of caps, $\T_3$ and $\T_5$ are \texttt{e}-crossings, $\T_4$ is an \texttt{o}-crossing, $\T_6$ and $\T_7$ are cups. }\label{fig:tdec}
\end{figure}

In the literature, trivial tangles are not usually considered to be elementary.  For us, it is convenient to consider them to be.

\subsection{Generators associated to tangles} \label{ssec:gen}

Let $\T$ be a $(m,n)$-tangle and fix a decomposition $\Tdec=(\T_1,\T_2,\dots,\T_k)$ of the form described in Subsection \ref{ssec:dec}; $\T_i$ is a $(n_{i-1},n_i)$-tangle.  To such a decomposition, we will associate a type $\tDA$ bimodule $\ctt(\Tdec)$ over $(\AA(-\bdy^0\T), \AA(\bdy^1\T))$.  In this subsection, we will describe $\ctt(\Tdec)$ as a bigraded bimodule over $(\II(-\bdy^0\T), \II(\bdy^1\T))$, leaving the rest of the structure  for the following subsections. 

For each $i=1,2,\ldots,k$, let $N_i=\max(n_{i-1},n_i)$, and define a subset $B_i\subseteq[N_i]$ by
\begin{equation*}
B_i = \begin{cases}
[N_i] & \text{if }\T_i\text{ is trivial or a crossing}, \\
[N_i] \setminus \{r\} & \text{if }\T_i\text{ is a cap or cup at height }r. \\
\end{cases}
\end{equation*}
For $i=0,1,\ldots,k$, let
\begin{equation*}
V_i = \{i\} \times [n_i],
\end{equation*}
and for $i=1,2,\ldots,k$, let
\begin{equation*}
W_i = \{i-\frac{1}{2}\} \times B_i.
\end{equation*}

In Figures \ref{fig:tdecVW} and \ref{fig:trefdecVW}, the dark red dots are the sets $V_i$ and the dark blue dots are the sets $W_i$. The vertical lines through the sets $V_i$ and $W_j$ cut the diagram for $\Tdec$  into its even and odd pieces. 
\begin{figure}[h]
\centering
    \includegraphics[scale=.8]{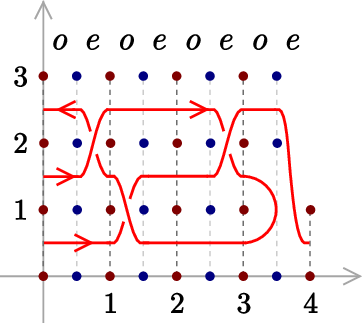} 
      \vskip .2 cm
       \caption{The tangle from Figure \ref{fig:tangle} decomposed as $\Tdec = (\T_1,\T_2,\T_3,\T_4)$, along with the sets of points $V_0, \ldots, V_4$ (in dark red) and $W_1, \ldots, W_4$ (in dark blue). The even and the odd  pieces are marked with an $e$ and an $o$, respectively. The elementary tangles $\T_1$ and $\T_3$ are examples of an \texttt{e}-crossing, $\T_2$ is an \texttt{o}-crossing, and $\T_4$ is a cup.}\label{fig:tdecVW}
\end{figure}
\begin{figure}[h]
\centering
    \includegraphics[scale=.8]{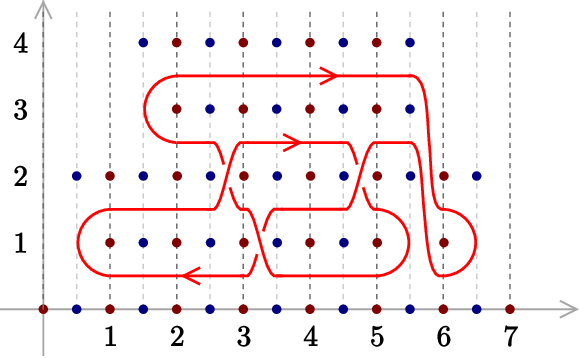} 
      \vskip .2 cm
       \caption{The trefoil decomposition from Figure \ref{fig:tdec}, along with with the sets of points $V_0, \ldots, V_7$ (in dark red) and $W_1, \ldots, W_7$ (in dark blue).}\label{fig:trefdecVW}
\end{figure}
Since \texttt{e}-crossings only occur inside even pieces, and \texttt{o}-crossings only occur inside odd pieces, we will work with singular projections of tangles from now on and not necessarily draw diagrams to scale. See, for example,  Figure \ref{fig:tangen}.

\begin{definition} A \emph{generator} associated to the decomposition $\Tdec$ of a tangle $\T$ is a choice of partial bijections $V_{i-1}\to W_i$ and $W_i\to V_i$ for each $i=1,2,\ldots,k$ such that every point of each $V_i$ ($i=1,2,\ldots,k-1$) and each $W_i$ ($i=1,2,\ldots,k$) is either in the domain or the range of one of the chosen partial bijections, but not both.  Note that there is no restriction on the points of $V_0$ and $V_k$.\end{definition}

We define diagrams of generators associated to $\Tdec$ in analogy with our definition of those of algebra generators (no triple intersections, minimal number of intersections, etc.).  In such a diagram, each point of $V_i$ or $W_i$ (except for $V_0$ and $V_k$) is the boundary of exactly one black strand. 

\begin{definition}\label{defn-cthat-vs} Let $\ctt(\Tdec)$ be the $\Ft$-vector space spanned by all generators for $\Tdec$.  Below in this subsection, we will give $\ctt(\Tdec)$ the structure of a bigraded bimodule over $(\II(-\bdy^0\T), \II(\bdy^1\T))$.  In the following subsection, we will give it the structure of a type $\tDA$ bimodule over $(\AA(-\bdy^0\T), \AA(\bdy^1\T))$.\end{definition}

If $x\in\ctt(\Tdec)$ is a generator, define $\sssA{i}(x)$, $\sssD{i}(x)$, $\eeA{i}(x)$, and $\eeD{i}(x)$ for $i=0,1$ in analogy with the corresponding definitions for algebra generators in Subsection \ref{ssec:alg}.  For example, if the $f:V_0\to W_1$ part of $x$ has domain $\ttt$, then $\sssA{0}(x)=\ttt$ and $\sssD{0}(x)=[n_0]\setminus\ttt$.  The $(\II(-\bdy^0\T), \II(\bdy^1\T))$ bimodule structure on $\ctt(\Tdec)$ is defined on generators $x$ as follows:
\begin{equation*}
\ee{\sss} x = \begin{cases}
x & \sssD{0}(x) =  \sss, \\
0 & \text{otherwise},
\end{cases} \qquad
x \ee{\sss} = \begin{cases}
x & \sssA{1}(x) = \sss, \\
0 & \text{otherwise}.
\end{cases}
\end{equation*}

Figure \ref{fig:tangen} depicts the diagram of a generator for the tangle decomposition of Figure \ref{fig:tdecVW}.
\begin{figure}[h]
\centering
    \includegraphics[scale=.8]{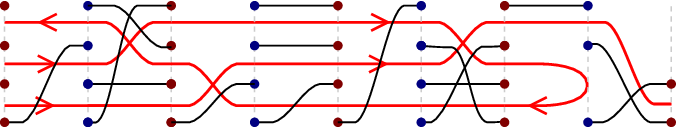} 
      \vskip .2 cm
       \caption{A diagram for a generator associated to the decomposition $\Tdec$ from Figure \ref{fig:tdecVW}.}\label{fig:tangen}
\end{figure}

The degree of a generator $x$ is a sum of the degrees of its constituent partial bijections, which are defined in terms of diagrams as follows.  The Alexander grading of a partial bijection diagram $f$ is defined by
\[
2A(f) =  \diagup \hspace{-.375cm}{\color{red}{\nwarrow}}(f) + \diagdown \hspace{-.375cm}{\color{red}{\swarrow}}(f)
-\diagup \hspace{-.37cm}{\color{red}{\searrow}}(f) - \diagdown \hspace{-.37cm}{\color{red}{\nearrow}}(f)
+{\color{red}{\searrow}} \hspace{-.38cm}{\color{red}{\nearrow}}(f)
-{\color{red}{\nwarrow}} \hspace{-.38cm}{\color{red}{\swarrow}}(f)
-{\color{red}{\leftarrow}}(f).
\]
All but the last term on the right-hand side are counts of the number of occurrences of the relevant crossing type in the diagram.  The last term counts the total number of left-oriented red strands in the diagram.  In the semi-circle part of a cap or a cup, we consider there to be one left- and one right-oriented strand.

The Maslov grading is defined differently on the partial bijections $V_{i-1}\to W_i$ and on those $W_i\to V_i$.  On the former,
\[
M(f) = -\diagup \hspace{-.34cm}\diagdown(f) + \diagup \hspace{-.375cm}{\color{red}{\nwarrow}}(f) + \diagdown \hspace{-.375cm}{\color{red}{\swarrow}}(f)
-{\color{red}{\swarrow}} \hspace{-.38cm}{\color{red}{\nwarrow}}(f)
-{\color{red}{\leftarrow}}(f),\]
while on the latter,
\[
M(f) = \diagup \hspace{-.34cm}\diagdown(f) - \diagup \hspace{-.37cm}{\color{red}{\searrow}}(f) -\diagdown \hspace{-.37cm}{\color{red}{\nearrow}}(f)
+{\color{red}{\searrow}} \hspace{-.38cm}{\color{red}{\nearrow}}(f).
\]
As with the definition of $\AA(\PP)$, $2$ times the Alexander grading will be the internal grading, and the Maslov grading will be the homological grading.

\begin{example*} \emph{The Alexander grading of the generator in Figure \ref{fig:tangen} is $-5$, and the Maslov grading of the same generator is $-1$.}
\end{example*}

\subsection{Bimodules associated to tangles}\label{ssec:DAmodule}
Let  $\Tdec$ be a decomposition of a tangle $\T$. In this subsection we endow $\ctt(\Tdec)$ with the structure of a $\tDA$ bimodule.

We define an algebra action 
\begin{equation*}\begin{split}
\ctt(\Tdec)\otimes A(\bdy^1\T) & \to \ctt(\Tdec) \\
x \otimes a & \mapsto x \cdot a.
\end{split}\end{equation*}
similarly to how we defined the multiplication on the algebras. For generators $x\in \ctt(\Tdec)$ and $a\in \AA(\bdy^1\T)$, if $\sssA{1}(x) \neq \sssA{0}(a)$, define the product to be $x \cdot a = 0$. If $\sssA{1}(x) = \sssA{0}(a)$ consider the diagram obtained from diagrams for $x$ and $a$ by concatenating them with $x$ on the left and $a$ on the right. If the resulting diagram has a minimal number of crossings, define $x \cdot a = a\circ x$, where the generator $a\circ x$ is obtained from $x$ by composing the rightmost partial bijection for $x$ with the partial bijection $a$,  so that the diagram for $x \cdot a$ is obtained from the concatenated diagram by horizontal scaling of the rightmost piece for $x$ along with the diagram for $a$ by $\frac 1 2$. If not, then define $x \cdot a = 0$. 

We sometimes write $xa$ for $x \cdot a$. Note that whether $xa$ vanishes only depends on $a$ and on the rightmost piece for $x$. See Figure \ref{fig:typeA} for an example where $\Tdec = (\T_2, \T_3)$ for the elementary tangles $\T_2$ and $\T_3$ from Figure \ref{fig:tdecVW}.

\begin{figure}[h]
\centering
\includegraphics[scale = .8]{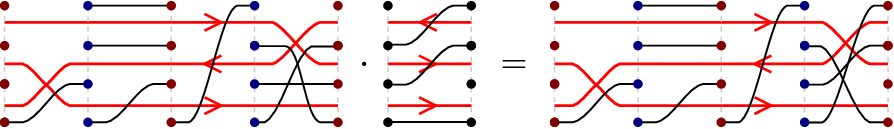} 
\vskip .2cm
         \caption{A non-zero product. The diagram for the result is the concatenation of  diagrams for the input, followed by the appropriate scaling.}\label{fig:typeA}
\end{figure}

Next, we define a map 
\[\bdy:\ctt(\Tdec)\to \ctt(\Tdec)\]
as the sum $\bdy_+ + \bdy_- + \bdy_m$, with the three components defined on generators below.

The map $\bdy_+$ modifies even pieces analogously to the differential on the algebra---it sends a generator $x$  to the sum of all generators $y$ whose diagram can be obtained from that of $x$ by resolving one black-black crossing in an even piece in such a way that the result has a minimal number of crossings. 

The map $\bdy_-$ modifies odd pieces in a ``dual'' way---it sends a generator $x$ to the sum of all generators $y$ whose diagram can be obtained from that of $x$ by \emph{introducing} a crossing between black-black strands that do not cross, in such a way that the total intersection number (taking into account both black and red strands) increases by one. This means we can introduce a crossing exactly when the change can be made local, as in Figure \ref{fig:resolution_dual}.
 \begin{figure}[h]
\centering
\includegraphics[scale = .8]{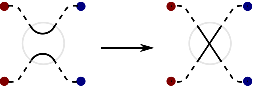} 
         \caption{Introducing a crossing. The diagram is unchanged outside the grey circle.}\label{fig:resolution_dual}
\end{figure}
Observe that if we think of a map on $\ctt(\Tdec)$ which sends a generator $x$ to a sum of generators $\sum y_i$ graphically by drawing an arrow from $x$ to each $y_i$, then the graph representing  $\bdy_-$ is obtained from the graph for $\bdy_+$ by changing the direction of each arrow.

The map $\partial_m$ modifies pairs of adjacent pieces of $\Tdec$. It sends a generator $x$ to the sum of all generators $y$ whose diagram can be obtained from that of $x$ by \emph{exchanging endpoints of black strands} as follows. Given a pair of endpoints $p$ and $q$  that lie on one and the same vertical line, we allow the two respective black strands to exchange these endpoints if:
\begin{itemize}
\item  both strands are in an odd piece, they cross, and each black or red strand that ends between $p$ and $q$ is on the odd side and crosses both of the given strands;
\item  both strands are in an even piece, they don't cross, and all black or red strands that end between $p$ and $q$ are on the even side, and do not cross either of the given strands; or
\item the two strands are in two adjacent pieces, and the exchange does not gain new crossings with black or red strands on the odd side, or lose such crossings on the even side. 
\end{itemize}
See Figure \ref{fig:dmix}. 

\begin{figure}[h]
\centering
\includegraphics[scale = .65]{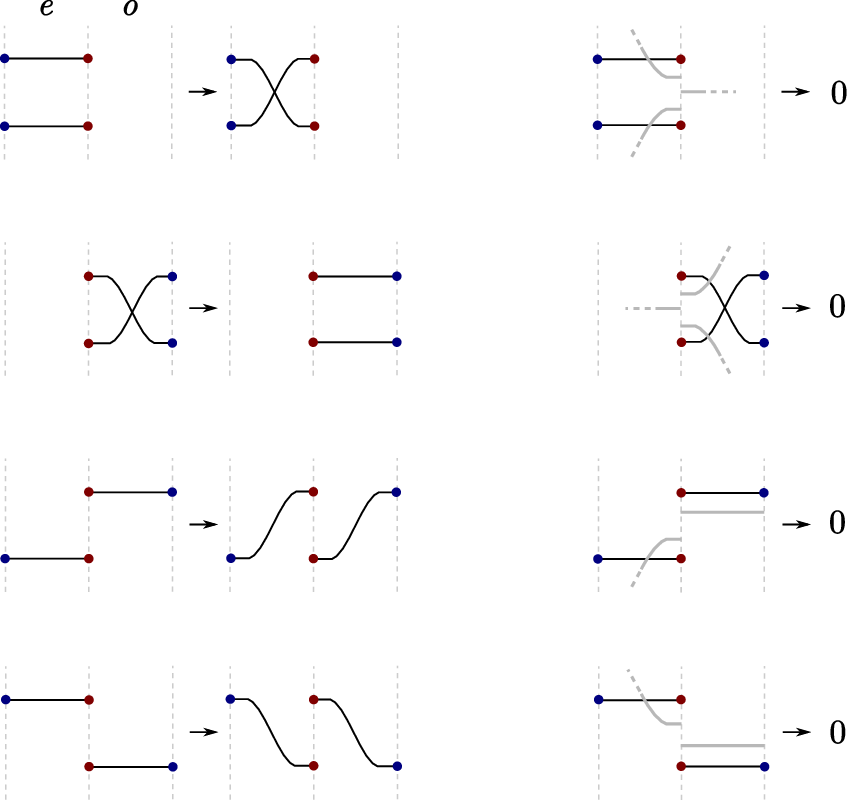} 
\vskip .2cm
         \caption{The map $\partial_m$ along a vertical line with an even piece to the left and an odd piece to the right. For the opposite case, one can simply reflect all diagrams about a vertical line and swap the colors of the dots. 
Left: The map $\bdy_m$ counts four types of exchanges (along the vertical line separating two adjacent pieces) of pairs of black strands, subject to a condition drawn to the right. Right: For a fixed pair of black strands, if there is a  black or a red strand in the same relative position to the pair as one of the displayed grey strands, we do not exchange. 
        }\label{fig:dmix}
\end{figure}

Last, we define a map 
\[\delta^L:\ctt(\Tdec)\to\AA(-\bdy^0\T)\otimes \ctt(\Tdec)\]
 as follows. 
 First, we represent a non-zero generator $a\otimes x$ for $\AA(-\bdy^0\T)\otimes \ctt(\Tdec)$ diagrammatically by gluing a diagram for $a$ to the left of a diagram for $x$. Left (resp.~right) multiplication is given by concatenating with the leftmost (resp.~rightmost) piece of the diagram for $a\otimes x$ and scaling horizontally; the differential is given as the sum $\bdy_+ + \bdy_- + \bdy_m$ of maps defined as above, where we treat the leftmost piece (the diagram for $a$) as even.
 For a generator $x$, $\delta^L(x)$ is the sum of all generators  $a\otimes y$ whose diagram can be obtained by  gluing a diagram for $\eeD{0}(x)$ (treated as an even diagram) to the left of a diagram for $x$, and exchanging two endpoints of black strands that lie on the gluing line, as in the definition of $\bdy_m$. See Figure \ref{fig:deltaL} for an example.

\begin{figure}[h]
\centering
  \labellist
    	 \pinlabel $\delta^L$ at -10 30
    	 \pinlabel  \textcolor{red}{$\cdots$} at 82 30
    	 \pinlabel  $=$ at 99 30
    	 \pinlabel  \footnotesize{$\textrm{ apply }\partial_m\textrm{ here}$} at 160 -27
   	 \pinlabel  $\uparrow$ at 156 -13
   	 \pinlabel  \textcolor{red}{$\cdots$} at 235 30
   	 \pinlabel  $=$ at 252 30
  	 \pinlabel  \textcolor{red}{$\cdots$} at 386 30
   	 \pinlabel  $+$ at 402 30
   	 \pinlabel  \textcolor{red}{$\cdots$} at 533 30
   \endlabellist
     \hspace{.2cm}
    \includegraphics[scale=.8]{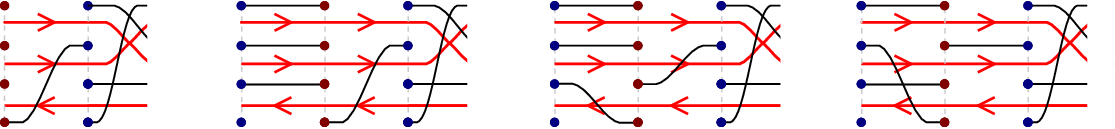} 
      \vskip .9 cm
       \caption{The map $\delta^L$ applied to the generator from Figure \ref{fig:tangen}.}\label{fig:deltaL}
\end{figure}

\begin{definition}
Given a tangle decomposition $\Tdec$, we give the $(\II(-\partial^0\T), \II(\partial^1\T))$ bimodule of Definition \ref{defn-cthat-vs} the structure of a type $\tDA$ bimodule over $(\AA(-\partial^0\T), \AA(\partial^1\T))$ using the following structure maps: let $\delta_i^1=0$ for $i>2$, define
 \[\delta_1^1:\ctt(\Tdec)\to \AA(-\partial^0\T)\otimes \ctt(\Tdec)\]
on generators by
\[\delta_1^1(x) = \eeD{0}(x)\otimes \bdy (x) + \delta^L(x),\]
and define
\[\delta_2^1:\ctt(\Tdec)\otimes \AA(\partial^1\T) \to \AA(-\partial^0\T)\otimes \ctt(\Tdec)\]
on generators by
\[\delta_2^1(x\otimes a) = \eeD{0}(x)\otimes xa.\]
\end{definition}
By Section 5.2. of \cite{pv}, the above structure is indeed a type $\tDA$ bimodule. 
Define the \emph{weight} of a generator $x$ of $\ctt(\Tdec)$ just as we did for a generator of $\AA(\PP)$, so that $|x|=|\eeA{1}(x)|$. The bimodule $\ctt(\Tdec)$ decomposes as a $\II(-\bdy^0\T)$, $\II(\bdy^1\T)$ bimodule as
$$\ctt(\Tdec) = \bigoplus_{k=0}^{\max\{|\bdy^0\T|, |\bdy^1\T|\}+1}\ctt_k(\Tdec),$$
where $\ctt_k(\Tdec)$ is the subspace of elements of weight $k$.  

Note that by forgetting the ``right map'' $\cdot$, the ``left map'' $\bdy^L$, or both,  we can view $\ctt(\Tdec)$ as a left type $D$ structure with $\delta^1=\delta_1^1$,  a right type $A$ structure  with structure maps $m_1=\bdy$, $m_2=\cdot$, and $m_i=0$ for $i>2$, or a chain complex with differential $\bdy$. 

For any of the above four types of structures, the Maslov grading $M$ is the homological grading, and $2$ times the Alexander grading $A$ is the internal grading.


\subsection{Invariance, pairing, and relation to knot Floer homology}\label{ssec:CTthms}

To state precise theorems below, we define the \emph{size} $|\Tdec|$ of a tangle decomposition $\Tdec$ as one half of the sum ${\color{red}{\leftarrow}}(p) + {\color{red}{\rightarrow}}(p)$, taken over all even and odd pieces $p$ of the decomposition.

Up to a factor that depends on the size of the chosen decomposition, tangle Floer homology is an invariant of the topological tangle $\T$.

\begin{theorem}[Invariance, {cf.~\cite[Theorem~5.3]{pv}}]\label{thm-invariance}
If $\Tdec'$ and $\Tdec''$ are two decompositions for a tangle $\T$ with $|\Tdec'|\geq |\Tdec''|$, then there is a  bigraded $\tDA$ homotopy equivalence
\[\ctt(\Tdec') \simeq \ctt(\Tdec'')\otimes (\Ft\oplus (\Ft[1]\{2\}))^{\otimes(|\Tdec'|-|\Tdec''|)}\]
\end{theorem}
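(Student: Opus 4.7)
The plan is to reduce the statement to (a) a \emph{pairing theorem} identifying $\ctt(\Tdec)$ with an iterated box tensor product of bimodules of elementary pieces, and (b) a short list of \emph{local moves} between decompositions of the same tangle, each of which is verified to induce a homotopy equivalence up to a trivial stabilization factor. Concretely, I would first argue that for $\Tdec=(\T_1,\ldots,\T_k)$ there is a bigraded $\tDA$ homotopy equivalence
\[
\ctt(\Tdec)\;\simeq\;\ctt(\T_1)\boxtimes\ctt(\T_2)\boxtimes\cdots\boxtimes\ctt(\T_k).
\]
Because the structure maps $\delta^1_1=\eeD{0}\otimes\partial+\delta^L$ and $\delta^1_2=\eeD{0}\otimes(\cdot)$ are defined piece by piece, with the mixing term $\partial_m$ occurring only across adjacent glue lines, this decomposition is essentially built into the definition; one just has to check that the combinatorial generators biject with matched pairs of strand pictures in adjacent pieces, and that the various $\partial_\pm,\partial_m,\delta^L$ contributions on the left match the box tensor differential on the right.

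Next I would prove a \emph{trivial stabilization lemma}: if $\T_{\mathrm{triv}}$ is a trivial $(n,n)$-tangle and $\T_i$ is any elementary piece, then inserting $\T_{\mathrm{triv}}$ between $\T_i$ and $\T_{i+1}$ produces a new decomposition $\Tdec^+$ with
\[
\ctt(\Tdec^+)\;\simeq\;\ctt(\Tdec)\otimes(\Ft\oplus\Ft[1]\{2\})^{\otimes(|\Tdec^+|-|\Tdec|)}.
\]
The idea is a direct computation at the level of generators: at a trivial slice, each strand can be either covered by a short black segment or left empty, yielding a two-term summand whose Maslov/Alexander shifts exactly match $\Ft\oplus\Ft[1]\{2\}$; summing over strands gives the desired tensor power, and the $\delta^L$, $\partial_m$ contributions across the new interface are shown to act as identity on this trivial factor. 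Together with the pairing statement of the previous paragraph, this shows insertion of trivial pieces produces precisely the stabilization factor in the theorem.

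The third step is a case analysis of \emph{planar moves} between two decompositions of the same $\T$ that leave the combinatorial structure of the tangle fixed. Any two such decompositions are connected by a finite sequence of (i) insertions/deletions of trivial pieces and (ii) height swaps between two adjacent elementary pieces whose active features lie in disjoint horizontal slabs (so that the tangle diagram is unchanged). For moves of type (ii) one must exhibit an explicit $\tDA$ homotopy equivalence; the swap changes how two critical points are distributed between the two pieces but not the global picture, and the corresponding change in $\ctt$ can be undone by a chain homotopy constructed from $\partial_m$ on the relevant slice. A careful bookkeeping shows no net change in size, so no stabilization factor appears for type (ii) moves, and the total stabilization factor that accumulates is exactly $(\Ft\oplus\Ft[1]\{2\})^{\otimes(|\Tdec'|-|\Tdec''|)}$.

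The main obstacle will be Step three: because $\delta^1_1$ already mixes adjacent pieces via $\delta^L$ and $\partial_m$, a local swap of two elementary pieces changes the type $\tD$ action in a nontrivial way, and one must produce an explicit null-homotopy of the difference that is compatible with both the differential $\partial$ and the right action $\cdot$. In practice I would organize this by first proving a ``commuting pieces'' lemma for disjoint supports, then handling the few remaining cases (e.g. sliding a trivial piece past a crossing or past a cap/cup) as short combinatorial computations. As a safety net, this invariance is precisely the content of \cite{pv}, where it is established via the underlying bordered Heegaard diagrams: different decompositions produce Heegaard diagrams related by isotopy, handleslides, and stabilizations, and each stabilization contributes a $\Ft\oplus\Ft[1]\{2\}$ factor, which is exactly how the $|\Tdec'|-|\Tdec''|$ exponent arises.
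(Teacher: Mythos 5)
There is a genuine gap, and it sits exactly where you anticipated trouble: Step three. Theorem \ref{thm-invariance} is an invariance statement for the \emph{topological} tangle $\T$, so two decompositions $\Tdec'$, $\Tdec''$ need not present the same planar diagram: they can differ by Reidemeister moves, by creation/cancellation of cap--cup pairs (Morse birth--death), and by slides of caps/cups past crossings, not only by insertion of trivial pieces and height swaps of elementary pieces with disjoint active supports. Your claimed classification of moves in (i)--(ii) therefore does not connect all pairs of decompositions, and the omitted moves are precisely the hard ones: they change the number and distribution of red strands (hence the size $|\Tdec|$) in a way your bookkeeping does not track, and the required $\tDA$ homotopy equivalences for them are not produced by a chain homotopy built from $\partial_m$ on a single slice. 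Even for the type (ii) swaps you do allow, asserting that the difference of structure maps ``can be undone by a chain homotopy constructed from $\partial_m$'' is the entire content of the claim and is left unproved; since $\delta^L$ and $\partial_m$ couple adjacent pieces, this is not a formality.

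Note also that the paper does not prove this theorem at all: Subsection \ref{ssec:CTthms} merely records it from \cite{pv}, and the introduction stresses that in \cite{pv} ``the proof of invariance is topological and analytic'' --- invariance is established by relating the combinatorially defined bimodules to bordered-type Heegaard diagrams and invoking Heegaard moves (isotopies, handleslides, stabilizations, the last of which produce the $\Ft\oplus\Ft[1]\{2\}$ factors), together with holomorphic-curve arguments. So your closing ``safety net'' paragraph is, in effect, the paper's entire proof, while your primary combinatorial route would be a genuinely new (and substantially harder) argument whose key step --- a complete set of local moves together with explicit combinatorial homotopy equivalences for R-moves and cap/cup slides --- is missing. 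Your Step one (pairing) and Step two (trivial stabilization) are fine and correspond to the Pairing Theorem and to the size bookkeeping already present in the paper, but they do not carry the weight of the invariance statement by themselves.
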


Further, $\ctt$ is functorial under composition (this follows automatically from the definition of $\ctt$; see also \cite[Definition 5.2]{pv}).

\begin{theorem}[Pairing]\label{thm-pairing}
If $\Tdec_1$ and $\Tdec_2$ are decompositions for tangles $\T_1$ and $\T_2$ with $\bdy^1\T_1=-\bdy^0 \T_2$,  then $\ctt(\Tdec)$ is bigraded isomorphic to $\ctt(\Tdec_1)\boxtimes \ctt(\Tdec_2)$ as type $\tDA$ structures, where $\Tdec$ is the decomposition for $\T_1\circ \T_2$ that is the concatenation of the two decompositions $\Tdec_1$ and $\Tdec_2$.
\end{theorem}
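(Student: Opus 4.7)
The plan is to exhibit a bigraded $\tDA$ isomorphism $\ctt(\Tdec) \cong \ctt(\Tdec_1) \boxtimes \ctt(\Tdec_2)$ through a direct combinatorial identification of generators. Concatenating decompositions, the sets $V_i$ and $W_i$ defining $\ctt(\Tdec)$ are simply those for $\Tdec_1$ followed by those for $\Tdec_2$, sharing a single interface vertical line which is both the rightmost line of $\Tdec_1$ and the leftmost line of $\Tdec_2$. A generator for $\Tdec$ is then equivalent to a pair $(x_1, x_2)$ consisting of a generator $x_1$ for $\Tdec_1$ and a generator $x_2$ for $\Tdec_2$ subject to the constraint that every point on the interface line is an endpoint of exactly one black strand in the combined diagram. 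This constraint is precisely $\sssA{1}(x_1) = \sssD{0}(x_2)$, which is the idempotent matching condition defining the underlying $(\II, \II)$-bimodule of $\ctt(\Tdec_1) \boxtimes \ctt(\Tdec_2)$. Both the Maslov and Alexander gradings are defined as sums over individual partial bijection diagrams, so preservation under this bijection is automatic.

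Next I would verify that each $\tDA$ structure map of $\ctt(\Tdec)$ matches the corresponding operation in the box tensor product. The right action takes place at the rightmost vertical line, which lies in $\Tdec_2$, giving $(x_1 \otimes x_2) \cdot c = x_1 \otimes (x_2 \cdot c)$ and thus matching $\delta^1_2$ on the box side (higher-order contributions vanish because $\ctt(\Tdec_1)$ has $m_i = 0$ for $i > 2$). Similarly, $\delta^L$ acts at the leftmost line, which lies in $\Tdec_1$, so $\delta^L(x_1 \otimes x_2) = \delta^L(x_1) \otimes x_2$. The content then reduces to the identity
\begin{equation*}
\bdy(x_1 \otimes x_2) = \bdy(x_1) \otimes x_2 + x_1 \otimes \bdy(x_2) + \sum_i (x_1 a_i) \otimes y_i,
\end{equation*}
where $\delta^L(x_2) = \sum_i a_i \otimes y_i$. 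The contributions $\bdy_+$ and $\bdy_-$ act only on crossings contained in a single piece, hence split cleanly across the two factors. Likewise $\bdy_m$ splits into contributions at vertical lines internal to $\Tdec_1$, internal to $\Tdec_2$, and at the interface; the first two again split cleanly, while the interface contribution must be matched with the third term above. This identification is natural: by definition $\delta^L(x_2)$ enumerates exchanges along $V_0^{(2)}$ by recording their combinatorial data in the algebra factor $a_i$, and right multiplication $x_1 a_i$ then absorbs $a_i$ into the rightmost piece of $\Tdec_1$, reproducing the interface exchange.

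The main obstacle will be matching the minimal-crossing (non-vanishing) conditions on both sides of the identification at the interface. On the $\ctt(\Tdec)$ side, $\bdy_m$ along the interface fires whenever the exchange does not gain crossings on the odd half nor lose crossings on the even half. On the box tensor side, this is realized by two separate conditions: the non-vanishing of $\delta^L(x_2) = \sum a_i \otimes y_i$, which is a local condition at $V_0^{(2)}$ inside $\Tdec_2$; and the non-vanishing of $x_1 \cdot a_i$, which requires the concatenation of the diagram of $x_1$ with that of $a_i$ to have a minimal number of crossings, a local condition at $V_k^{(1)}$ inside $\Tdec_1$. I would argue that $a_i$ can be drawn in an arbitrarily thin vertical neighborhood of the interface line, so the two local conditions combine to precisely the single minimal-crossing condition governing the interface exchange in $\ctt(\Tdec)$. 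Once this combinatorial matching is established, the constructed map is an $\Ft$-linear bijection intertwining every structure map, and so is an honest bigraded isomorphism of type $\tDA$ bimodules.
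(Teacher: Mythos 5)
Your overall strategy is the right one, and it is worth noting that the paper itself gives no proof of this statement: the Pairing theorem is quoted from \cite{pv}, where it is established by essentially this kind of combinatorial identification. Your identification of the underlying $(\II,\II)$-bimodules (the interface condition $\sssA{1}(x_1)=\sssD{0}(x_2)$ is exactly the idempotent matching in the box tensor product), the additivity of the bigradings, and the clean splitting of $\bdy_+$, $\bdy_-$, and of the $\bdy_m$-exchanges along vertical lines interior to each factor are all correct, since those conditions are local to the two pieces adjacent to the line in question.

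The problem is that the step you yourself flag as the main obstacle is the actual content of the theorem, and the ``arbitrarily thin vertical neighborhood'' remark is not an argument. What has to be proved is a case-by-case combinatorial statement: for endpoints $p,q$ on the interface line, the exchange is permitted by $\bdy_m$ in $\ctt(\Tdec)$ if and only if (i) the corresponding exchange on the diagram obtained by gluing $\eeD{0}(x_2)$ to $x_2$ is permitted, so that $\delta^L(x_2)$ contains a term $a\otimes y$, and (ii) the concatenation of $x_1$ with $a$ has a minimal number of crossings, i.e.\ $x_1\cdot a\neq 0$; and moreover that the resulting generator is precisely $(x_1 a)\otimes y$. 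Condition (i) sees only the horizontal idempotent strands at heights $\sssD{0}(x_2)=\sssA{1}(x_1)$, not the actual last even piece of $x_1$, so one must check, for each of the three cases in the definition of $\bdy_m$ (both strands in the odd piece, both in the even piece, one in each), that the even-side part of the $\bdy_m$ condition coincides with the minimality condition (ii) and the odd-side part with condition (i). In particular, the case in which both exchanged strands lie on the $\Tdec_1$ side is not a ``mixed'' interface exchange: there $a$ is a non-idempotent algebra element containing a single crossing, and the exchange only reappears after $a$ is absorbed into $x_1$ by right multiplication; your narrative does not distinguish this case from the others. You should also make explicit that $\bdy_m$ is defined only along lines separating two adjacent pieces, so that no interface exchange is already counted inside $\bdy(x_1)\otimes x_2$ or $x_1\otimes\bdy(x_2)$ (otherwise your formula would double count), and you should address the boundedness hypothesis needed to form $\ctt(\Tdec_1)\boxtimes\ctt(\Tdec_2)$ in the first place. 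With these verifications written out the argument closes; as it stands it is an outline of the proof rather than a proof.
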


In the case of a closed link, tangle Floer homology agrees with knot Floer homology $\HFKh(\K)$, the bigraded invariant of knots and links defined, in its various versions, in \cite{hfk, jrth, oszcomb, oszlink}. As defined here, given a link decomposition $\Tdec$, $\ctt(\Tdec)\cong \ctt_0(\Tdec)\oplus \ctt_1(\Tdec)$. The two summands $\ctt_0(\Tdec)$ and $\ctt_1(\Tdec)$ are bigraded homotopy equivalent  (for experts: each of $\ctt_0(\Tdec)$ and $\ctt_1(\Tdec)$ corresponds to a certain  Heegaard diagram for the link for $\Tdec$), and the relation to knot Floer homology can be stated in terms of either. 

\begin{theorem}[{cf.~\cite[Theorem 6.1, item (4) of Corollary 12.5]{pv}}]
Suppose that $\Tdec$ is a decomposition for a closed link $\K$ with $|\K|$ connected components.  Then there is a bigraded  homotopy equivalence of complexes
\[\ctt_0(\Tdec)\simeq \ctt_1(\Tdec)\simeq \HFKh(\K)\otimes (\Ft\oplus (\Ft[1]\{2\}))^{\otimes(|\Tdec|-|\K|)}\otimes (\Ft\oplus (\Ft[1]\{0\})).\]
\end{theorem}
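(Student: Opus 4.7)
The proof strategy rests on building a multi-pointed Heegaard diagram $\HD(\Tdec)$ for the link $\K$ directly from the decomposition $\Tdec$, and then identifying the combinatorial chain complex $\ctt_k(\Tdec)$ with the Heegaard Floer chain complex of $\HD(\Tdec)$. To each elementary piece $\T_i$ of the decomposition one associates a local piece of a surface together with $\alpha$- and $\beta$-curves; the sets $V_i$ and $W_i$ of Subsection \ref{ssec:gen} sit on the vertical slices between pieces and index the intersection points. Stacking these local pieces horizontally and closing up along the top and bottom (since $\K$ is closed) yields a closed surface $\Sigma$ together with curve systems $\boldsymbol\alpha,\boldsymbol\beta$ and a collection of basepoints in the complementary regions, one set ($\ws$) and the other ($\zs$). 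The subscript $k\in\{0,1\}$ records a choice of which kind of basepoint sits in a preferred region, explaining the splitting $\ctt(\Tdec)=\ctt_0(\Tdec)\oplus\ctt_1(\Tdec)$ and the homotopy equivalence $\ctt_0(\Tdec)\simeq\ctt_1(\Tdec)$.

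Having built $\HD(\Tdec)$, the plan is to verify that it is \emph{nice} in the Sarkar--Wang sense: each non-basepointed region is either a bigon or a rectangle. Niceness is a local property and can be checked piece-by-piece for the five elementary tangle types in Definition \ref{def:elemtangle}. Consequently, all holomorphic disk counts reduce to counts of embedded empty bigons and rectangles, which one can check pictorially. The next step is to set up a bijection between generators of $\ctt_k(\Tdec)$ and intersection points in $\Sym^d(\Sigma)\cap\mathbb T_\alpha\cap\mathbb T_\beta$: the condition that every point of $V_i$ and $W_i$ is either a source or target of exactly one partial bijection matches the requirement that a generator assigns exactly one intersection to each $\alpha$-circle and each $\beta$-circle. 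The Alexander and Maslov gradings defined combinatorially via crossing counts must then be matched with the standard formulas, which is a direct calculation piece-by-piece.

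The key technical step is matching the differential. One must show that the four contributions $\delta^L$, $\partial_+$, $\partial_-$, and $\partial_m$ (together assembled into the type $\tDA$ structure) enumerate exactly the empty rectangles and bigons of $\HD(\Tdec)$: $\partial_+$ corresponds to rectangles lying inside a single even piece, $\partial_-$ (which is dual, introducing a crossing) corresponds to rectangles inside an odd piece that ``pass around'' a region in the dual way, $\partial_m$ to rectangles straddling the vertical slice between two pieces, and $\delta^L$ to those straddling the leftmost closing slice (which, in the closed case, can be absorbed into $\partial_m$ since the empty sign sequence produces a trivial algebra). The essential observation is that minimality of intersections corresponds precisely to the rectangle or bigon being empty. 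This is where the bulk of the bookkeeping lives and is the main obstacle: one must verify case by case, separated by piece type and direction of crossing, that the combinatorial resolutions and endpoint-exchanges enumerate exactly the empty embedded bigons/rectangles in the Heegaard diagram with the correct signs (over $\Ft$, just correct counts).

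Finally, one compares $\HD(\Tdec)$ with a minimal multi-pointed Heegaard diagram for $\K$. Each elementary piece contributes a fixed ``excess'' coming from its width contributions ${\color{red}{\leftarrow}}(p)+{\color{red}{\rightarrow}}(p)$; altogether this excess is $|\Tdec|-|\K|$, and each unit of excess can be removed by a destabilization whose effect on the Heegaard Floer chain complex is exactly to tensor with $\Ft\oplus\Ft[1]\{2\}$. The factor $\Ft\oplus\Ft[1]\{0\}$ records the extra basepoint contribution coming from the fact that one of the basepoints in each component is unused in the definition of $\HFKh$ but still present in $\ctt_k(\Tdec)$. Chaining together the pairing theorem, niceness, the identification of differentials, and the destabilization count yields the claimed bigraded homotopy equivalence.
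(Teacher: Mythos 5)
First, a point of comparison: this paper does not prove the statement at all --- Subsection \ref{ssec:CTthms} explicitly only \emph{recalls} the invariance, pairing, and $\HFKh$ theorems from \cite{pv}, so the ``paper's own proof'' is a citation. Your strategy (build a multi-pointed Heegaard diagram from the decomposition, check it is nice, match generators, gradings and differentials, then destabilize down to a minimal diagram for $\K$) is essentially the strategy of \cite{pv}, where the combinatorial bimodules are in fact \emph{defined} from such Heegaard diagrams and the nice-diagram/destabilization bookkeeping is carried out there. So as an outline you are pointed in the right direction, but the outline defers exactly the step where the content lives: the case-by-case verification that $\partial_+,\partial_-,\partial_m$ (and $\delta^L$) enumerate precisely the empty embedded bigons and rectangles of the glued diagram, and that the combinatorial Maslov/Alexander formulas agree with the standard ones. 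You name this as ``the main obstacle'' but do not execute it, and without it the claimed identification of $\ctt_k(\Tdec)$ with a knot Floer complex is not established.

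Two specific assertions in your sketch are also incorrect as stated. First, the final factor $\Ft\oplus(\Ft[1]\{0\})$ cannot come from ``one unused basepoint in each component'': that would produce $|\K|$ such factors, whereas the theorem has exactly one, independent of $|\K|$. Its source is global --- the closed-up diagram for $\ctt_k(\Tdec)$ carries one extra free region/basepoint (reflected combinatorially in the single unconstrained point of $V_0$ and of $V_k$, which is also what produces the weight splitting $\ctt_0\oplus\ctt_1$), and this contributes a single Maslov-only doubling. Second, for a closed link the boundary sign sequence is empty but $\AA(())$ is not the trivial algebra: $[0]=\{0\}$, so $\AA(())\cong\II(())\cong\Ft\oplus\Ft$. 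The reason $\delta^L$ drops out in the closed case is that no endpoint exchanges along the leftmost slice are possible (the idempotent strand and the strands of the generator never provide two exchangeable endpoints there), not that the algebra is trivial; relatedly, your description of the $\ctt_0/\ctt_1$ splitting as a ``choice of which kind of basepoint sits in a preferred region'' misidentifies what is actually a weight (occupancy) decomposition, with the two summands corresponding to two different Heegaard diagrams for $\K$. These points need to be repaired, and the differential-matching carried out (or cited to \cite{pv}), before the argument is complete.
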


\subsection{Dictionary}\label{ssec:dictionary}
Section~\ref{sec:tangle} can be thought of as a ``user's guide'' to \cite{pv};  the terminology  \scalebox{.96}[1.0]{and  notation  here are different from   \cite{pv} to ease readability. Below is a (non-comprehensive)} list of the major correspondences between this section and \cite[Section 3]{pv}. 

\begin{center}
\begin{tabular}{l l }
\toprule
{\bf Here} \hspace{5.1cm} & {\bf In \cite{pv}} \hspace{7cm} \\
\midrule
 a tangle decomposition   & an alternating sequence of shadows and mirror-\\
 $\Tdec = (\T_1, \ldots, \T_k)$ & shadows $\boldsymbol{\P} = (\P_1^*, \P_2, \ldots, \P_{2k-1}^*, \P_{2k}) $\\
\midrule
the $i$-th even piece in $\Tdec$  & the shadow $\P_{2i}$\\
 & (drawn over white background color)\\
\midrule
the  $i$-th odd piece in $\Tdec$ & the mirror-shadow $\P_{2i-1}^*$\\
 & (drawn over grey background color)\\
\midrule
\textcolor{red}{$\longrightarrow$} in an even piece & \textcolor{green}{- - - -} on $\P$\\
\midrule
\textcolor{red}{$\longleftarrow$} in  an odd piece & \textcolor{green}{- - - -} on $\P^*$\\
\midrule
\textcolor{red}{$\longleftarrow$} in an even piece & \textcolor{orange}{$=\mathrel{\mkern-3mu}=\mathrel{\mkern-3mu}=$} on $\P$\\
\midrule
\textcolor{red}{$\longrightarrow$} in an odd piece & \textcolor{orange}{$=\mathrel{\mkern-3mu}=\mathrel{\mkern-3mu}=$} on $\P^*$\\
\midrule
$\PP\subset \{\pm1\}^n$ & $\epsilon\in (2^{\{\pm\}})^{n}$\\
\midrule
$V_i$ & $\a$ for $\P_{2i}$ and $P_{2i+1}^*$ (called $\a_i$ below)\\
\midrule
$W_i$ & $\b$ for $\P_{2i-1}^*$ and $\P_{2i}$ (called $\b_i$ below)\\
\bottomrule
\end{tabular}
\end{center}

Further, fix a generator $x$ of $\Tdec$, where $\Tdec$ is as above,  and label the respective partial bijections as $x_{2i-1}:V_{i-1}\to W_i$ and $x_{2i}:W_i\to V_i$. We have the following correspondences. 

\begin{center}
\begin{tabular}{l l }
\toprule
{\bf Here}\hspace{5.1cm} & {\bf In \cite{pv}}\\
\midrule
 $x_{2i}:W_i\to V_i$ & $(S_{2i}, T_{2i}, \phi_{2i})$, where $S_{2i}\subset \b_i, T_{2i}\subset \a_i$ and  \\
 &  $\phi_{2i}:S_{2i}\to T_{2i}$ is a bijection \\
\midrule
 $x_{2i-1}:V_{i-1}\to W_i$ & $(S_{2i-1}, T_{2i-1}, \phi_{2i-1})^*$, where $S_{2i-1}\subset \b_{i-1}$, $T_{2i-1}\subset \a_i$\\
 & and  $\phi_{2i-1}:S_{2i-1}\to T_{2i-1}$ is a bijection \\
\midrule
a black strand for $x_i$ &  a pair $(s,\phi s)$ for $s\in S_i$\\
\midrule
$\sssA{1}(x)$ & $T_{2k}$\\
\midrule
$\sssD{0}(x)$ & $\a_0 \setminus T_0$\\
\bottomrule
\end{tabular}
\end{center}

\newpage 



\section{Proofs of the main results} 
\label{sec:results}


\subsection{The Grothendieck group of $\AA(\PP)$}\label{subsec-A-P}

If an element $x \in \AA(\PP)$ satisfies $\ee{\ttt} x = \delta_{\sss,\ttt} x$, we say that $x$ has \emph{left $\II(\PP)$-degree} equal to $\sss$.  We define right $\II(\PP)$-degree as well as such degrees for elements of $\AA(\PP)$-modules analogously.

Let $\PP\in\{\pm1\}^n$ be a sign sequence, $\AA(\PP)$ the corresponding tangle Floer dg algebra, and $\II(\PP)$ the ring of idempotents in $\AA(\PP)$.  For a subset $\sss\subseteq[n]$, let $\ee{\sss}$ be the corresponding primitive idempotent.  Since $\II(\PP)\cong\Ft^{\oplus 2^{n+1}}$ with zero differential, we have
\begin{equation}
K_0(\II(\PP))\cong\Z^{2^{n+1}}\otimes_\Z\Z[q^{\pm1}].
\end{equation}
A free $\Z[q^{\pm1}]$-basis for $K_0(\II(\PP))$ is given by $\{[\II(\PP)\ee{\sss}]\mid \sss\subseteq[n]\}$.  We call this the \emph{primitive basis} of $K_0(\II(\PP))$.

Let $V$ be a bigraded $\Ft$-vector space and write $V_{i,j}$ for the part of $V$ in homological (Maslov) degree $i\in\Z$ and internal (twice Alexander) degree $j\in\Z$.  Suppose $\bigoplus_{i\in\Z}V_{i,j}$ is finite dimensional for each $j\in\Z$.  The \emph{graded Euler characteristic} of $V$ is the formal series
\begin{equation}
\chi(V) = \sum_{i,j\in\Z} (-1)^i q^{j} \dim_{\Ft}(V_{i,j}).
\end{equation}
In all our cases of interest, $\chi(V)$ will be a polynomial in $q^{\pm1}$.

Let $\prescript{\AA}{}{\Ft\{\sss\}}$ be the one-dimensional left type $\tD$ structure over $\AA$ spanned by a homogeneous generator $x_\sss$ of Maslov degree $0$, Alexander degree $0$, $\II(\PP)$-degree $\sss$, and satisfying $\delta^1(x_\sss)=0$.  The corresponding left type $\tA$ module
\begin{equation}\label{eqn-one-dim-type-D}
\AA \boxtimes \prescript{\AA}{}{\Ft\{\sss\}} \cong \AA \ee{\sss}.
\end{equation}

\begin{proposition}\label{prop-K0-tangle-floer}
Let $\PP\in\{\pm1\}^n$.
\begin{enumerate}
  \item Then the Grothendieck group of the compact derived category of left dg modules over $\AA(\PP)$ is
  \begin{equation}
  K_0(\AA(\PP))\cong \Z^{2^{n+1}}\otimes \Z[q^{\pm1}].
  \end{equation}
  \item If $\prescript{}{\AA(\PP)}{M}$ is a compact cofibrant dg module over $\AA(\PP)$, then $[M] = \sum_{\sss\subseteq[n]}c_{\sss}[\AA(\PP)\ee{\sss}]$ for certain constants $c_{\sss}\in\Z[q^{\pm1}]$.  The set $\{[\AA\ee{\sss}]\mid\sss\subseteq[n]\}$ is a basis of $K_0(\AA(\PP))$ over $\Z[q^{\pm1}]$.
  \item Under the quasi-equivalence of Proposition \ref{prop-A-D-equivalence}, we can view the class of a bounded left type $\tD$ structure as an element of $K_0(\AA(\PP))$.  Under this identification, for any bounded left $D$ structure $\prescript{\AA(\PP)}{}{N}$,
  \begin{equation}\label{eqn-prop-K0-D}
  [\prescript{\AA(\PP)}{}{N}] = \sum_{\sss\subseteq[n]} \chi(\ee{\sss}N)[\AA(\PP)\ee{\sss}].
  \end{equation}
\end{enumerate}
\end{proposition}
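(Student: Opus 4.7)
The logical core is part (3); parts (1) and (2) will follow quickly from it. I will work via Proposition~\ref{prop-A-D-equivalence}: under $\AA(\PP) \boxtimes (-)$, compact objects of $\dcat{\AA(\PP)}$ correspond to bounded type $\tD$ structures with finite-dimensional underlying $\bfk$-module, and the one-dimensional type $\tD$ structure $\prescript{\AA(\PP)}{}{\Ft\{\sss\}}$ with bigrading shifts $[i]\{j\}$ corresponds to $\AA(\PP)\ee{\sss}[i]\{j\}$ via (\ref{eqn-one-dim-type-D}), so its class in $K_0$ is $(-1)^i q^j [\AA(\PP)\ee{\sss}]$.

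For part (3), I construct a filtration
\begin{equation*}
0 = F^0 N \subset F^1 N \subset \cdots \subset F^k N = N
\end{equation*}
of $N$ by type $\tD$ sub-structures with one-dimensional successive quotients, each isomorphic up to bigrading shift to some $\prescript{\AA(\PP)}{}{\Ft\{\sss_i\}}$. The basic step is that a homogeneous $y \in N$ spans a one-dimensional sub-structure exactly when $\delta^1(y) = 0$, and such a $y$ always exists for bounded $N$. Indeed, choose $d$ maximal with $\delta^d \neq 0$ on $N$, pick $x$ with $\delta^d(x) \neq 0$, and expand $\delta^d(x) = \sum_j \alpha_j \otimes y_j$ with homogeneous $y_j$ and linearly independent $\alpha_j \in \AA(\PP)^{\otimes d}$; then $0 = \delta^{d+1}(x) = \sum_j \alpha_j \otimes \delta^1(y_j)$ forces $\delta^1(y_j) = 0$ for every~$j$. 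Such a nonzero $y_j$ lies in some idempotent summand $\ee{\sss_j} N$ and spans a type $\tD$ sub-structure $\Ft \cdot y_j \cong \prescript{\AA(\PP)}{}{\Ft\{\sss_j\}}[M(y_j)]\{2A(y_j)\}$, and the quotient $N/\Ft \cdot y_j$ is again a bounded type $\tD$ structure of strictly smaller dimension; induct on the dimension. Each inclusion $F^{i-1}N \hookrightarrow F^i N$ yields a distinguished triangle in $\dcat{\AA(\PP)}$, and $K_0$-additivity yields
\begin{equation*}
[\AA(\PP)\boxtimes N] \,=\, \sum_i (-1)^{M(y_i)} q^{2A(y_i)}[\AA(\PP)\ee{\sss_i}] \,=\, \sum_{\sss \subseteq [n]} \chi(\ee{\sss}N)[\AA(\PP)\ee{\sss}],
\end{equation*}
which is (\ref{eqn-prop-K0-D}).

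For parts (1) and (2): any compact cofibrant dg module $M$ is, up to quasi-isomorphism, of the form $\AA(\PP)\boxtimes N$ for a bounded type $\tD$ structure $N$, so part (3) gives $[M] = \sum_\sss \chi(\ee{\sss}N)[\AA(\PP)\ee{\sss}]$ with coefficients in $\Z[q^{\pm 1}]$, proving (2) and spanning for (1). For linear independence, define $\psi\colon K_0(\DMod{\AA(\PP)}) \to \Z[q^{\pm 1}]^{\oplus 2^{n+1}}$ on bounded type $\tD$ structures by $\psi([N]) = (\chi(\ee{\sss}N))_{\sss \subseteq [n]}$; this descends to $K_0$ because $\ee{\sss}(-)$ is exact on type $\tD$ structures and the graded Euler characteristic is additive on distinguished triangles and satisfies $\chi(V[1]) = -\chi(V)$, $\chi(V\{1\}) = q\,\chi(V)$. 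Since $\psi([\prescript{\AA(\PP)}{}{\Ft\{\sss_0\}}])$ is the $\sss_0$-th unit vector, transporting $\psi$ to $K_0(\AA(\PP))$ via Proposition~\ref{prop-A-D-equivalence} shows $\psi([\AA(\PP)\ee{\sss_0}])$ is also the $\sss_0$-th unit vector, so $\{[\AA(\PP)\ee{\sss}]\}_{\sss \subseteq [n]}$ is $\Z[q^{\pm 1}]$-linearly independent, completing (1). The main obstacle is the nilpotency extraction in the second paragraph---guaranteeing a vector killed by $\delta^1$ in a bounded type $\tD$ structure---after which the Euler-characteristic bookkeeping is routine.
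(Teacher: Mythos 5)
Your proof of part (3) is essentially the paper's own argument: the paper also reduces via Proposition \ref{prop-A-D-equivalence} to modules of the form $\AA(\PP)\boxtimes N$ with $N$ a bounded type $\tD$ structure, finds a generator $x_j$ with $\delta^1(x_j)=0$, splits off $\AA(\PP)\ee{x_j}$ (with the shift $[-M(x_j)]\{-2A(x_j)\}$) via a distinguished triangle, and inducts on $\dim N$; your maximal-$d$ argument actually supplies the justification for the existence of such a generator, which the paper dispatches with ``by boundedness.''  Where you genuinely diverge is the linear-independence step: the paper applies the induced functor $\dcat{\AA(\PP)}\to\dcat{\II(\PP)}$, $M\mapsto M/(\AA(\PP)_+M)$, and reads off coordinates in the primitive basis of $K_0(\II(\PP))$, whereas you define the functional $\psi(N)=(\chi(\ee{\sss}N))_{\sss}$ directly on bounded type $\tD$ structures.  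These are the same map in disguise, but your stated reason that $\psi$ descends to $K_0$ is incomplete: additivity of $\chi$ on triangles and exactness of $\ee{\sss}(-)$ do not by themselves show that $\chi(\ee{\sss}N)$ is invariant under homotopy equivalence of type $\tD$ structures, and this invariance is needed since $K_0$ is generated by isomorphism classes in the derived category; note that the underlying space $\ee{\sss}N$ carries no differential, and homotopy equivalent bounded type $\tD$ structures can have different graded dimensions (a two-generator structure with $\delta^1(x)=\ee{\sss}\otimes y$ is contractible), so the claim is true but not formal.  The quick fix is precisely the paper's device: $\chi(\ee{\sss}N)$ equals the graded Euler characteristic of the chain complex $\ee{\sss}(M/(\AA(\PP)_+M))$ for $M=\AA(\PP)\boxtimes N$, hence of its homology, and therefore depends only on the quasi-isomorphism class of $M$.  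With that one observation added, your treatment of (1)--(3) is complete and equivalent to the paper's.
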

\begin{proof} The idea of the proof is analogous to that of \cite[Theorem 21]{dec}.  We give the details here for completeness.

The dg $(\II(\PP),\AA(\PP))$-bimodule $\AA(\PP)$ induces a derived tensor functor (as in Subsection \ref{subsec:dg-algebra}) $\dcat{\AA(\PP)}\to\dcat{\II(\PP)}$.  This functor takes a dg module $M$ over $\AA(\PP)$ to $M/(\AA(\PP)_+M)$.  In particular, for a primitive idempotent $\ee{\sss}$, the induced map on Grothendieck groups takes $[\AA(\PP)\ee{\sss}]$ to the primitive basis vector $[\II(\PP)\ee{\sss}]$.  It follows that
\begin{equation*}
\{[\AA(\PP)\ee{\sss}]\mid\sss\subseteq[n]\}
\end{equation*}
is a $\Z[q^{\pm1}]$-linearly independent set in $K_0(\AA(\PP))$.

It suffices, then, to express the symbol of any compact cofibrant dg module over $\AA(\PP)$ as a $\Z[q^{\pm1}]$-linear combination of symbols of the form $[\AA(\PP)\ee{\sss}]$.  By Proposition \ref{prop-A-D-equivalence}, it suffices to consider modules of the form $\AA(\PP)\boxtimes N$, where $N$ is a bounded left type $\tD$ structure over $\AA(\PP)$.

Given such a module, choose an $\Ft$-basis $x_1,\ldots,x_r$ for $N$ such that $x_i = \eeD{0}(x_i) x_i$ for each $i$.  By boundedness, we can choose $j$ such that $\delta^1(x_j)=0$.  In the sum
\begin{equation*}
d(a\otimes x_j) = \sum_{k=1}^\infty \left[(m_k\otimes\id_N) \circ (\id_M\otimes\delta^{k-1})\right](a\otimes x_j),
\end{equation*}
only the $k=1$ term survives.  This term equals $m_1(a)\otimes x_j=d(a)\otimes x_j$.  Hence $\AA(\PP)\otimes x_j$ is a dg submodule isomorphic to $\AA(\PP)\ee{x_j}[-M(x_j)]\{-2A(x_j)\}$, where $\ee{x_j}$ is the primitive idempotent with the same $\II(\PP)$-degree as $x_j$.  Write $M_j=\AA(\PP)\ee{x_j}[-M(x_j)]\{-2A(x_j)\}$ for short, and let $\iota$ be the inclusion of this submodule.  Then the triangle
\begin{equation*}
\xymatrix{M_j \ar[r]^-\iota & \AA(\PP)\boxtimes N \ar[r] & C(\iota) \ar[r] & M_j[1]}
\end{equation*}
is isomorphic to a distinguished triangle, so 
\begin{equation*}
[\AA(\PP)\boxtimes N] = [M_j] + [(\AA(\PP)\boxtimes N)/M_j].
\end{equation*}
We have $[M_j] = (-1)^{M(x_j)}q^{-2A(x_j)}[\AA(\PP)\ee{x_j}]$.  So by induction on the dimension of $N$, $[\AA(\PP)\boxtimes N]$ is a sum of terms of the form $\pm q^k [\AA(\PP)\ee{\sss}]$.  So $K_0(\AA(\PP))$ is a free $\Z[q^{\pm1}]$-module with a basis given by the $2^{n+1}$ symbols $\{[\AA\ee{\sss}] \mid \sss\subseteq[n]\}$.

Equation \eqref{eqn-prop-K0-D} follows immediately from the above analysis and the quasi-equivalence of Proposition \ref{prop-A-D-equivalence}.
\end{proof}

Proposition \ref{prop-K0-tangle-floer} implies the first half of Theorem A of the Introduction.

We call $\{[\AA\ee{\sss}]\mid \sss\subseteq[n]\}$ the \emph{primitive basis} of $K_0(\AA)$.  When discussing elements of and linear maps between Grothendieck groups of $\AA(\PP)$'s, our default will be to use this basis.  We call $a_\sss$ the ``$\sss$ component'' of an element
\begin{equation*}
\sum_{\sss}a_\sss[\AA\ee{\sss}]\in K_0(\AA),
\end{equation*}
and if $X:K_0(\AA(\PP))\to K_0(\AA(\PP'))$ is a $\Z[q^{\pm1}]$-linear map and
\begin{equation*}
X([\AA(\PP)\ee{\ttt}]) = \sum_{\sss} X_{\sss,\ttt} [\AA(\PP')\ee{\sss}],
\end{equation*}
we call $X_{\sss,\ttt}$ the ``$(\sss,\ttt)$ matrix entry'' of $X$.  In our main construction, $X$ will usually be the linear map induced by the derived (or box) tensor product with some bimodule.  In the case of a type $\tDA$ bimodule, the computation of the components $X_{\sss,\ttt}$ is just a graded dimension count, as we presently explain.

\begin{proposition}\label{prop-matrix-entries}
Let $M$ be a bounded type $\tDA$ bimodule over $(\AA(\PP'), \AA(\PP))$ for two sign sequences $\PP,\PP'$.  $M$ induces a homomorphism
\begin{equation*}
[M\boxtimes-] : K_0(\AA(\PP)) \to K_0(\AA(\PP'))
\end{equation*}
via the rule
\begin{equation}\label{eqn-DA-homo-1}
[M\boxtimes-] : [\prescript{}{\AA(\PP)}{M'}] \mapsto [\AA(\PP') \boxtimes \prescript{\AA(\PP')}{}{M}_{\AA(\PP)} \dtensor{} \prescript{}{\AA(\PP)}{M'}].
\end{equation}
Using the quasi-equivalence of Proposition \ref{prop-A-D-equivalence}, this rule can also be stated as
\begin{equation}\label{eqn-DA-homo-2}
[M\boxtimes-] : [\prescript{\AA(\PP)}{}{N}] \mapsto [\prescript{\AA(\PP')}{}M_{\AA(\PP)} \boxtimes \prescript{\AA(\PP)}{}{N}].
\end{equation}
This homomorphism has matrix entries
\begin{equation}\label{eqn-DA-homo-matrix}
[M\boxtimes-]_{\sss,\ttt} = \chi(\ee{\sss}M\ee{\ttt}).
\end{equation}
\end{proposition}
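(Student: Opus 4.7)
The strategy is to reduce the matrix entry computation to a graded dimension count by combining the quasi-equivalence of Proposition \ref{prop-A-D-equivalence} with part (3) of Proposition \ref{prop-K0-tangle-floer}. First I would verify that the two descriptions \eqref{eqn-DA-homo-1} and \eqref{eqn-DA-homo-2} agree: the standard associativity
\begin{equation*}
\AA(\PP') \boxtimes \bigl(\prescript{\AA(\PP')}{}M_{\AA(\PP)} \boxtimes \prescript{\AA(\PP)}{}{N}\bigr) \simeq \bigl(\AA(\PP') \boxtimes \prescript{\AA(\PP')}{}M_{\AA(\PP)}\bigr) \dtensor{\AA(\PP)} \bigl(\AA(\PP) \boxtimes \prescript{\AA(\PP)}{}{N}\bigr),
\end{equation*}
combined with the type $\tD$--type $\tA$ correspondence of Proposition \ref{prop-A-D-equivalence}, identifies the two. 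Since $M$ is bounded as a type $\tDA$ bimodule and $N$ is a bounded type $\tD$ structure, $M \boxtimes N$ is a bounded type $\tD$ structure whose underlying bigraded $\Ft$-vector space is $M \otimes_{\II(\PP)} N$; the induction argument in the proof of Proposition \ref{prop-K0-tangle-floer} then ensures that the associated cofibrant dg module is compact, so $[M \boxtimes -]$ lands in $K_0(\AA(\PP'))$ and is $\Z[q^{\pm 1}]$-linear.

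By $\Z[q^{\pm 1}]$-linearity, it suffices to evaluate $[M \boxtimes -]$ on the primitive basis element $[\AA(\PP)\ee{\ttt}]$ for each $\ttt \subseteq [n]$. By \eqref{eqn-one-dim-type-D}, this basis element is the class of the one-dimensional left type $\tD$ structure $\prescript{\AA(\PP)}{}{\Ft\{\ttt\}}$. Applying \eqref{eqn-prop-K0-D} to the bounded type $\tD$ structure $M \boxtimes \prescript{\AA(\PP)}{}{\Ft\{\ttt\}}$ yields
\begin{equation*}
[M \boxtimes -]\bigl([\AA(\PP)\ee{\ttt}]\bigr) = \sum_{\sss} \chi\bigl(\ee{\sss}(M \boxtimes \Ft\{\ttt\})\bigr) \, [\AA(\PP')\ee{\sss}].
\end{equation*}
As a bigraded $\Ft$-vector space (ignoring the differential), $\ee{\sss}(M \boxtimes \Ft\{\ttt\})$ is $\ee{\sss}M \otimes_{\II(\PP)} \Ft\{\ttt\}$, which is canonically isomorphic to $\ee{\sss}M\ee{\ttt}$. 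Since the graded Euler characteristic of a chain complex over a field depends only on its underlying graded vector space, $\chi(\ee{\sss}(M \boxtimes \Ft\{\ttt\})) = \chi(\ee{\sss}M\ee{\ttt})$, which is precisely \eqref{eqn-DA-homo-matrix}.

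The main obstacle I anticipate is bookkeeping in the first step---keeping track of which box products and derived tensor products sit over which algebras and correctly matching the two formulations via Proposition \ref{prop-A-D-equivalence}---rather than any genuine mathematical difficulty. The verification that $M \boxtimes -$ preserves compact objects and respects distinguished triangles is a formal consequence of the $\ainf$-framework reviewed in Subsection \ref{subsec:dg-algebra}, independent of the tangle Floer-specific content.
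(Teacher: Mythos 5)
Your proposal is correct and follows essentially the same route as the paper: both establish the homomorphism and the equivalence of \eqref{eqn-DA-homo-1} and \eqref{eqn-DA-homo-2} via Proposition \ref{prop-A-D-equivalence}, then compute matrix entries by evaluating on the one-dimensional type $\tD$ structures $\Ft\{\ttt\}$ (i.e.\ on $[\AA(\PP)\ee{\ttt}]$) and reducing to the graded Euler characteristic of $\ee{\sss}M\ee{\ttt}$, exactly as in the paper's displayed computation. Your explicit appeal to \eqref{eqn-prop-K0-D} and the identification $\ee{\sss}(M\boxtimes\Ft\{\ttt\})\cong\ee{\sss}M\ee{\ttt}$ is just a slightly more verbose unpacking of the same argument.
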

\begin{proof} 
That $M$ induces a homomorphism via \eqref{eqn-DA-homo-1} follows from the quasi-equivalence of Proposition \ref{prop-A-D-equivalence} and the fact that the derived tensor product with a dg module induces a homomorphism of Grothendieck groups (cf. Section \ref{subsec:dg-algebra}).  Again by Proposition \ref{prop-A-D-equivalence}, the rules \eqref{eqn-DA-homo-1} and \eqref{eqn-DA-homo-2} are equivalent.   Abbreviating $\AA(\PP)$ by $\AA$ and $\AA(\PP')$ by $\AA'$, the matrix entry description is an easy computation:
\begin{equation*}
[\prescript{\AA'}{}{M}_\AA \dtensor{} \AA \boxtimes \prescript{\AA}{}{\Ft\{\sss\}}]
\refequal{\eqref{eqn-one-dim-type-D}} [\prescript{\AA'}{}{M}_\AA \dtensor{} \AA \ee{\sss}]
= \sum_\ttt [\AA' \boxtimes \ee{\ttt} M \ee{\sss}]
= \sum_\ttt \chi(\ee{\ttt}M\ee{\sss}) [\AA'\ee{\ttt}].\qedhere
\end{equation*}
\end{proof}

\begin{corollary}\label{cor:chi}
Let $\T$ be a tangle with decomposition $\Tdec$. Then the matrix entries of the homomorphism $[\ctt(\Tdec)\boxtimes-]:K_0(\AA(\bdy^1\T))\to K_0(\AA(-\bdy^0\T))$ are given by
\begin{equation}
[\ctt(\Tdec)\boxtimes-]_{\sss, \ttt} = \chi(\ee{\sss} \ctt(\Tdec) \ee{\ttt}).
\end{equation}
\end{corollary}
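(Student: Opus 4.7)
The plan is to derive this corollary as an immediate consequence of Proposition \ref{prop-matrix-entries} applied to the tangle Floer bimodule $\ctt(\Tdec)$.

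First I would verify that the hypotheses of Proposition \ref{prop-matrix-entries} are satisfied with $\PP = \bdy^1\T$, $\PP' = -\bdy^0\T$, and $M = \ctt(\Tdec)$. By the construction in Subsection \ref{ssec:DAmodule}, $\ctt(\Tdec)$ is a type $\tDA$ bimodule over $(\AA(-\bdy^0\T), \AA(\bdy^1\T))$. The bimodule is bounded because its structure maps satisfy $\delta^1_i = 0$ for $i > 2$, so in particular $\delta^1_i = 0$ for all sufficiently large $i$. Thus the box tensor product $\ctt(\Tdec) \boxtimes N$ is well-defined for any left type $\tD$ structure $N$ over $\AA(\bdy^1\T)$, and equivalently the derived tensor product $\ctt(\Tdec) \dtensor{\AA(\bdy^1\T)} (-)$ sends compact objects to compact objects.

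Second, I would simply invoke equation \eqref{eqn-DA-homo-matrix} of Proposition \ref{prop-matrix-entries}, which states that the $(\sss,\ttt)$ matrix entry of $[M\boxtimes-]$ is $\chi(\ee{\sss}M\ee{\ttt})$. Specializing to $M = \ctt(\Tdec)$ yields the claimed formula
\begin{equation*}
[\ctt(\Tdec)\boxtimes-]_{\sss, \ttt} = \chi(\ee{\sss} \ctt(\Tdec) \ee{\ttt}).
\end{equation*}

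There is essentially no obstacle here, since the corollary is a direct restatement of the previous proposition in the specific context of tangle Floer bimodules. The only thing to be checked carefully is the compatibility of the idempotent conventions: in Subsection \ref{ssec:DAmodule}, $\ee{\sss} \ctt(\Tdec)$ picks out generators $x$ with $\sssD{0}(x) = \sss$, while $\ctt(\Tdec) \ee{\ttt}$ picks out generators with $\sssA{1}(x) = \ttt$, and the Euler characteristic is taken with respect to the bigrading $(M, 2A)$ used throughout Section \ref{sec:tangle}. Both conventions match those used in Proposition \ref{prop-K0-tangle-floer} and Proposition \ref{prop-matrix-entries}, so no adjustment is required.
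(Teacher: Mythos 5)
Your proposal is correct and is exactly the argument the paper intends: the corollary is stated as an immediate specialization of Proposition \ref{prop-matrix-entries} to $M = \ctt(\Tdec)$, with the boundedness of $\ctt(\Tdec)$ (structure maps $\delta^1_i = 0$ for $i>2$) guaranteeing the hypotheses. Your check of the idempotent conventions is a reasonable extra verification but does not change the substance.
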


We will abbreviate $[M\boxtimes-]_{\sss,\ttt}$ as $[M]_{\sss,\ttt}$ below.

\subsection{The action of $\ctt(\Tdec)$ on the Grothendieck group}\label{subsec-K0-CT}

In this subsection, we compute the symbol $[\ctt(\Tdec)]$ for any tangle decomposition by induction.  Our base case is an explicit computation for the smallest elementary tangles.  Then we induct by computing the result of adding a horizontal strand above or below a decomposition $\Tdec$.

\begin{remark*} \emph{All matrices below are with respect to the primitive basis $\{[\AA(\PP)\ee{\sss}] \mid I \subseteq[n]\}$ of $K_0(\AA(\PP))$ as defined in the previous subsection.  We give this basis the \emph{reverse lexicographic order} with respect to the alphabet $0<1<\ldots <n$: a subset $\sss$ is treated as a word $w(\sss)$ spelled in decreasing order, and we say $\sss < \ttt$ if $w(\sss) < w(\ttt)$ in the lexicographic (``alphabetical'') order.  For example, when $|\PP|=2$, the ordering is
\begin{equation*}
\ee{\emptyset} \enskip<\enskip \ee{\{0\}} \enskip<\enskip \ee{\{1\}} \enskip<\enskip \ee{\{0,1\}} \enskip<\enskip \ee{\{2\}} \enskip<\enskip \ee{\{0,2\}} \enskip<\enskip \ee{\{1,2\}} \enskip<\enskip \ee{\{0,1, 2\}}.
\end{equation*}
Suppose $|\PP|=n$ and $\PP'$ is the sign subsequence which omits the last entry of $\PP$.  The ordered primitive basis for $\PP$ has length $2^{n+1}$.  Its first $2^n$ elements are exactly the ordered primitive basis for $\PP'$, and the $(2^n+i)$-th element is $\sss\sqcup\{n\}$, where $\sss$ is the $i$-th element of either ordered basis.}
\end{remark*}

From now on, using the previous remark and Proposition \ref{prop-K0-tangle-floer}, we will identify
\begin{equation}\label{eqn-identification}\begin{split}
K_0(\AA(\PP)) \otimes_{\Z[q^{\pm1}]} \C(q) & \cong V_{\PP} \otimes L(\lambda_{\PP}) \\
[\AA(\PP)\ee{\sss}] & \leftrightarrow \ell_{j_1} \wedge \ldots \wedge \ell_{j_r},
\end{split}\end{equation}
where $\{j_1 < \ldots < j_r\} = [n] \setminus \sss$.  Note that this identification is order preserving.

Although by invariance it suffices to compute only one of the eight possible oriented crossings, we provide the matrices for all crossings (but provide details for only one).

In the discussion below, let $\PP=(P_1, P_2)$ be a sign sequence of length $2$.

\scalebox{.957}[1.0]{Let $\mathtt e_{\PP}$ denote the elementary tangle that is an  \texttt{e}-crossing of two strands such that  $\bdy^1(\mathtt e_{\PP}) = {\PP}$.} \scalebox{.964}[1.0]{Let  $\mathtt o_{\PP}$ denote the elementary tangle that is an \texttt{o}-crossing of two strands such that $-\bdy^0(\mathtt o_{\PP}) = \PP$.} With this notation, $\Tdec=(\mathtt e_{\PP}, \mathtt o_{\PP})$ is a decomposition for the tangle $\T=I\times (P_2, P_1)$, and $(\mathtt o_{\PP}, \mathtt e_{\PP})$ is a decomposition for the tangle $I\times {\PP}$.

To compute the action of $\ctt(\Tdec)$ on $K_0$, we just need to compute the bigradings of the generators, and take the graded Euler characteristic. Recall that the primitive idempotents are of form $\ee{\sss}$, where $\sss\subseteq \{0,1,2\}$, and that a generator $x$ for $\ctt(\Tdec)$ occupies the black dot at height $h$ on the left (respectively right) exactly when $h\notin \sssD{0}(x)$ (respectively $h\in \sssA{1}(x)$).  For ease of reading, we give the matrices for each weight separately.

We discuss the computation of  $[\ctt_2(\mathtt e_{++})]_{\{0,2\}, \{1,2\}}$ below. Since the induced order on subsets of size $2$ is $\enskip \{0,1\}\enskip<\enskip \{0,2\} \enskip<\enskip \{1,2\}$, this corresponds to the $(2,3)$ entry of the matrix for $[\ctt_2(\mathtt e_{++})]$.

The $\tDA$ bimodule $\ctt(\mathtt e_{++})$ for the crossing $\mathtt e_{++}$ (depicted on the left diagram below) is generated by pairs of partial bijections associated to the right diagram below. 

\begin{center}
\includegraphics[scale = .75]{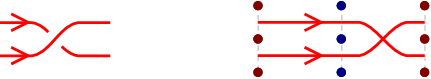} 
\end{center}

The vector space $\ee{\{0,2\}} \ctt(\mathtt e_{++}) \ee{\{1,2\}}$ is generated over $\Ft$ by the strand diagrams in Figure \ref{fig:e++02to12}. The formulas from Section \ref{ssec:gen} yield the bigradings listed in Figure \ref{fig:e++02to12}.
   \begin{figure}[h]
 \centering
 \labellist
             \pinlabel  $(-1,-2)$ at 40 -14
             \pinlabel  $(-2,-4)$ at 140 -14
             \pinlabel  $(-1,-2)$ at 238 -14
             \pinlabel  $(0,0)$ at 335 -14
             \pinlabel  $(-1,-2)$ at 435 -14
             \pinlabel  $(0,-2)$ at 530 -14
       \endlabellist
 \includegraphics[scale = .75]{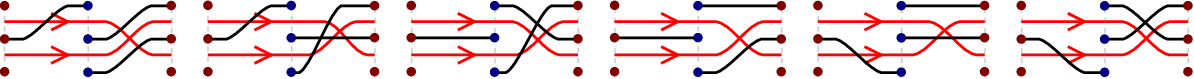} 
 \vskip .3 cm
       \caption{The generators of $\ee{\{0,2\}} \ctt(\mathtt e_{++}) \ee{\{1,2\}}$ and their $(M, 2A)$ bigradings.}\label{fig:e++02to12}
\end{figure}

   Thus, 
      \[[\ctt_2(\mathtt e_{++})]_{\{0,2\}, \{1,2\}} = \chi(\ee{\{0,2\}} \ctt(\mathtt e_{++}) \ee{\{1,2\}}) = \sum_{x \textrm{ a generator}}(-1)^{M(x)}q^{2A(x)} =  (1-q^{-2})^2.\]
   
   Analogous computations for the remaining weights of $\ctt(\mathtt e_{++})$ and for the remaining \texttt{e}-crossings yield the matrices in Table \ref{tbl:crossings}.

\begin{table}[p]
     \begin{center}
     \begin{tabular}{  p{2.5cm} p{4.5cm}  p{7.5cm} }
     \toprule
      {\bf Tangle} & {\bf Diagram} & {\bf Action on $K_0$} \\ 
      \midrule
     \labellist   
        \pinlabel $\mathtt e_{++}$ at 28 40 
     \endlabellist
     \raisebox{-.5\totalheight}{\includegraphics[scale = .7]{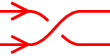}}
      & 
     \raisebox{-.5\totalheight}{\includegraphics[scale = .7]{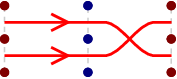}}
      & 
{$\!\begin{aligned}
[\ctt_0(\mathtt e_{++})] &=  (1-q^{-2})^2  \begin{pmatrix}
-q
   \end{pmatrix}\\
[\ctt_1(\mathtt e_{++})] &=  (1-q^{-2})^2  \begin{pmatrix}
      -q & 1 & 0 \\
      0 & q^{-1} & 0\\
      0 & 1 & -q
   \end{pmatrix}\\
[\ctt_2(\mathtt e_{++})] &=  (1-q^{-2})^2  \begin{pmatrix} 
      q^{-1} & 0 & 0 \\
      1 & -q & 1\\
      0 & 0 & q^{-1}
   \end{pmatrix}\\
   [\ctt_3(\mathtt e_{++})] &=  (1-q^{-2})^2  \begin{pmatrix} 
        q^{-1}
   \end{pmatrix}
   \end{aligned}$}
  \\
        \midrule
     \labellist   
        \pinlabel $\mathtt e_{-+}$ at 28 40 
     \endlabellist
     \raisebox{-.5\totalheight}{\includegraphics[scale = .7]{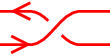}}
      & 
     \raisebox{-.5\totalheight}{\includegraphics[scale = .7]{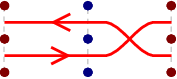}}
      & 
{$\!\begin{aligned}
[\ctt_0(\mathtt e_{-+})] &=  (1-q^{-2})^2  \begin{pmatrix}
1
   \end{pmatrix}\\
[\ctt_1(\mathtt e_{-+})] &=  (1-q^{-2})^2  \begin{pmatrix}
      1 & q & 0 \\
      0 & 1 & 0\\
      0 & -q^{-1} & 1
   \end{pmatrix}\\
[\ctt_2(\mathtt e_{-+})] &=  (1-q^{-2})^2  \begin{pmatrix} 
      1 & 0 & 0 \\
      -q^{-1} & 1 & q\\
      0 & 0 & 1
   \end{pmatrix}\\
   [\ctt_3(\mathtt e_{-+})] &=  (1-q^{-2})^2  \begin{pmatrix} 
        1
   \end{pmatrix}
   \end{aligned}$}
  \\
         \midrule
     \labellist   
        \pinlabel $\mathtt e_{+-}$ at 28 40 
     \endlabellist
        \raisebox{-.5\totalheight}{\includegraphics[scale = .7]{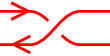}}
      & 
     \raisebox{-.5\totalheight}{\includegraphics[scale = .7]{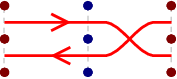}}
      & 
{$\!\begin{aligned}
[\ctt_0(\mathtt e_{+-})] &=  (1-q^{-2})^2  \begin{pmatrix}
1
   \end{pmatrix}\\
[\ctt_1(\mathtt e_{+-})] &=  (1-q^{-2})^2  \begin{pmatrix}
      1 & -q^{-1} & 0 \\
      0 & 1 & 0\\
      0 & q & 1
   \end{pmatrix}\\
[\ctt_2(\mathtt e_{+-})] &=  (1-q^{-2})^2  \begin{pmatrix} 
      1 & 0 & 0 \\
      q & 1 & -q^{-1} \\
      0 & 0 & 1
   \end{pmatrix}\\
   [\ctt_3(\mathtt e_{+-})] &=  (1-q^{-2})^2  \begin{pmatrix} 
1
   \end{pmatrix}
   \end{aligned}$}
      \\
              \midrule
     \labellist   
        \pinlabel $\mathtt e_{--}$ at 28 40 
     \endlabellist
          \raisebox{-.5\totalheight}{\includegraphics[scale = .7]{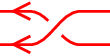}}
      & 
     \raisebox{-.5\totalheight}{\includegraphics[scale = .7]{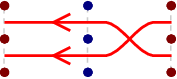}}
      & 
{$\!\begin{aligned}
[\ctt_0(\mathtt e_{--})] &=  (1-q^{-2})^2  \begin{pmatrix}
q^{-1}
   \end{pmatrix}\\
[\ctt_1(\mathtt e_{--})] &=  (1-q^{-2})^2  \begin{pmatrix}
      q^{-1} & 1 & 0 \\
      0 & -q & 0\\
      0 & 1 & q^{-1}
   \end{pmatrix}\\
[\ctt_2(\mathtt e_{--})] &=  (1-q^{-2})^2  \begin{pmatrix} 
      -q & 0 & 0 \\
      1 & q^{-1} & 1\\
      0 & 0 & -q
   \end{pmatrix}\\
   [\ctt_3(\mathtt e_{--})] &=  (1-q^{-2})^2  \begin{pmatrix} 
-q
   \end{pmatrix}
   \end{aligned}$}
      \\
      \bottomrule
      \end{tabular}
      \vspace{.2cm}
      \caption{The action of the crossing bimodules on the Grothendieck group}
      \label{tbl:crossings}
      \end{center}
      \end{table}

  An elementary tangle $\triv_{\PP} = I\times {\PP}$ acts as a scalar multiple of the identity on $K_0$,
  \begin{equation}
  [\ctt(\triv_{\PP})]_{\sss, \ttt} = (1-q^{-2})^2 \id.
  \end{equation}
  One can see this by computing $ [\ctt(\id_{()})]$, and then applying Proposition \ref{prop:addstrand} (below).
   
   By invariance (Theorem \ref{thm-invariance}),  since $(\mathtt o_{\PP},  \mathtt e_{\PP})$ is isotopic to   $\triv_{\PP}$, we have 
   \[\ctt(\mathtt o_{\PP},  \mathtt e_{\PP})\simeq \ctt (\triv_{\PP})\otimes (\Ft\oplus \Ft[1]\{2\})^{\otimes 2},\] 
   so $[\ctt(\mathtt o_{\PP},  \mathtt e_{\PP})] = (1-q^{-2})^4\id$. Then 
   \[ [\ctt_i(\mathtt o_{\PP})] = (1-q^{-2})^4[\ctt_i(\mathtt e_{\PP})]^{-1}.\]
    Alternatively, one can compute the Euler characteristic explicitly. 
   
   Let $\mathtt{cup}_{+-}$  or $\mathtt{cup}_{-+}$ denote the elementary tangle that is a single cup such that $\bdy^0(\mathtt {cup}_{+-}) = (+,-)$ or $\bdy^0(\mathtt {cup}_{-+}) = (-, +)$, respectively.    Let $\mathtt{cap}_{+-}$  or $\mathtt{cap}_{-+}$ denote the elementary tangle that is a single cup such that $\bdy^1(\mathtt {cap}_{+-}) = (+,-)$ or $\bdy^1(\mathtt {cap}_{-+}) = (-, +)$, respectively. 
      
 The only nonzero summands of $\ctt$ for the two cups are those of weight $0$ and  $1$, and the only nonzero summands of $\ctt$ for the two caps are those of weight $1$ and $2$. Their action on $K_0$ is given by the matrices in Table \ref{tbl:cupcap}. 

   \begin{table}[h!]
     \begin{center}
     \begin{tabular}{  p{2.5cm} p{4.5cm}  p{7.5cm} }
     \toprule
      {\bf Tangle} & {\bf Diagram} & {\bf Action on $K_0$} \\ 
      \midrule
           \labellist   
        \pinlabel $\mathtt{cup}_{-+}$ at 22 38 
     \endlabellist
           \raisebox{-.5\totalheight}{\includegraphics[scale = .7]{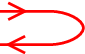}}
      & 
     \raisebox{-.5\totalheight}{\includegraphics[scale = .7]{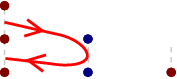}}
      & 
     {$\!\begin{aligned}
[\ctt_0(\mathtt{cup}_{-+})] =  (1-q^{-2})  \begin{pmatrix} 
      1\\
      0\\
      1
   \end{pmatrix}\\
[\ctt_1(\mathtt{cup}_{-+})] =  (1-q^{-2})  \begin{pmatrix} 
      0\\
      1\\
      0
   \end{pmatrix}
   \end{aligned}$}
  \\
      \midrule
           \labellist   
        \pinlabel $\mathtt{cup}_{+-}$ at 22 38 
     \endlabellist
      \raisebox{-.5\totalheight}{\includegraphics[scale = .7]{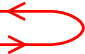}}
      & 
     \raisebox{-.5\totalheight}{\includegraphics[scale = .7]{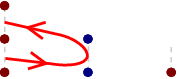}}
      & 
           {$\!\begin{aligned}
[\ctt_0(\mathtt{cup}_{+-})] =  (1-q^{-2})  \begin{pmatrix} 
      1\\
      0\\
      1
   \end{pmatrix}\\
[\ctt_1(\mathtt{cup}_{+-})] =  (1-q^{-2})  \begin{pmatrix} 
      0\\
      1\\
      0
   \end{pmatrix}
   \end{aligned}$}
  \\
      \midrule
            \addlinespace[0.4em]
                \labellist   
        \pinlabel $\mathtt{cap}_{+-}$ at 22 38 
     \endlabellist
         \raisebox{-.6\totalheight}{\includegraphics[scale = .7]{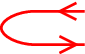}}
      & 
     \raisebox{-.4\totalheight}{\includegraphics[scale = .7]{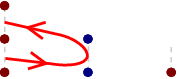}}
      & 
           {$\!\begin{aligned}
[\ctt_1(\mathtt{cap}_{+-})] =  (1-q^{-2})  \begin{pmatrix} 
      0 & 1 & 0
   \end{pmatrix}\\
[\ctt_2(\mathtt{cap}_{+-})] =  (1-q^{-2})  \begin{pmatrix} 
      1 & 0 & 1
   \end{pmatrix}
   \end{aligned}$}
      \\
      \addlinespace[0.4em]
         \midrule
          \addlinespace[0.4em]
             \labellist   
        \pinlabel $\mathtt{cap}_{-+}$ at 22 38 
     \endlabellist
            \raisebox{-.6\totalheight}{\includegraphics[scale = .7]{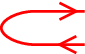}}
      & 
     \raisebox{-.4\totalheight}{\includegraphics[scale = .7]{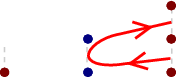}}
      & 
           {$\!\begin{aligned}
[\ctt_1(\mathtt{cap}_{-+})] =  (1-q^{-2})  \begin{pmatrix} 
      0 & 1 & 0
   \end{pmatrix}\\
[\ctt_2(\mathtt{cap}_{-+})] =  (1-q^{-2})  \begin{pmatrix} 
      1 & 0 & 1
   \end{pmatrix}
   \end{aligned}$}
      \\
            \addlinespace[0.4em]
      \bottomrule
      \end{tabular}
      \vspace{.2cm}
      \caption{The action of the cup and cap bimodules on the Grothendieck group}
      \label{tbl:cupcap}
      \end{center}
      \end{table}
      \vskip -.3cm
      
Any elementary tangle can be obtained from one of the small elementary tangles above by adding straight strands above and/or below it. We now discuss the effect that adding a strand has on $[\ctt(\Tdec)]$.

 If $\Tdec$ is a decomposition for a tangle $\T$, let $\Tdec^+$ be the decomposition (for another tangle) obtained by adding a right-oriented horizontal strand  above $\Tdec$, see Figure \ref{fig:addstrand}. 
Let $\Tdec^-$ be the decomposition obtained  by adding a left-oriented horizontal strand above $\Tdec$.  Similarly, let $\Tdec_+$ and $\Tdec_-$ be the decompositions obtained  by adding a right- or left-oriented horizontal strand below $\Tdec$, respectively. 
\begin{figure}[h]
\begin{center}
     \labellist   
        \pinlabel \textcolor{red}{$\Tdec$} at 60 35 
        \pinlabel \textcolor{red}{$\rotatebox{90}{$\cdots$}$} at 15 35
        \pinlabel \textcolor{red}{$\rotatebox{90}{$\cdots$}$} at 110 35
     \endlabellist
\includegraphics[scale = .75]{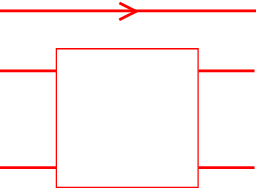}
\vspace{.1cm}
\caption{The decomposition $\Tdec^+$, obtained by adding a strand oriented to the right above $\Tdec$.}
 \label{fig:addstrand}
\end{center}
\end{figure}

\begin{proposition}\label{prop:addstrand}
Let $\Tdec$ be a decomposition for an $(m,n)$-tangle into $k$ elementary tangles, and let $\Tdec'$ be one of $\Tdec^+, \Tdec^-, \Tdec_+, \Tdec_-$. For any  $\sss\subseteq [m]$ and  $\ttt\subseteq [n]$, let  $\sss'=\sss, \ttt'=\ttt, \sss'' = \sss\sqcup \{m+1\}, \ttt'' = \ttt\sqcup \{n+1\}$ if $\Tdec'=\Tdec^{\pm}$, and $\sss' = \{s+1\mid s\in \sss\}$, $\ttt' = \{t+1 \mid t\in \ttt\}$, $\sss'' = \sss\sqcup \{0\}$, $\ttt'' = \ttt\sqcup \{0\}$ if $\Tdec'=\Tdec_{\pm}$. Then 
\begin{align*}
[\ctt(\Tdec')]_{\sss', \ttt'}  &= (1-q^{-2})^k [\ctt(\Tdec)]_{\sss', \ttt'}, \\
[\ctt(\Tdec')]_{\sss'', \ttt''}  &= (1-q^{-2})^k[\ctt(\Tdec)]_{\sss', \ttt'}, \\
[\ctt(\Tdec')]_{\sss'', \ttt'}  &= 0, \\
[\ctt(\Tdec')]_{\sss', \ttt''}  &= 0.
\end{align*}
\end{proposition}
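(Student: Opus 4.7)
By Corollary \ref{cor:chi}, each matrix entry $[\ctt(\Tdec')]_{\sigma,\tau}$ equals the graded Euler characteristic $\chi(\ee{\sigma}\ctt(\Tdec')\ee{\tau})$, so the plan is to enumerate generators of $\ee{\sigma}\ctt(\Tdec')\ee{\tau}$ with signs $(-1)^{M}q^{2A}$ and compare with the analogous count for $\ctt(\Tdec)$. I will carry this out for $\Tdec'=\Tdec^+$; the cases $\Tdec^-$ and $\Tdec_\pm$ should follow by essentially the same argument, tracking the orientation-dependent ${\color{red}{\leftarrow}}(f)$ term (for $\Tdec^-$) and renumbering positions (for $\Tdec_\pm$).

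First I would handle the two vanishing identities by a weight-balance parity argument. A generator $x\in\ee{\sigma}\ctt(\Tdec^+)\ee{\tau}$ assigns weights $|f_j|$ to its partial bijections, constrained by the interior recursions $|f_{2i-1}|+|f_{2i}|=|W_i|+1$ and $|f_{2i}|+|f_{2i+1}|=|V_i|+1$, together with the boundary conditions $|f_1|=|V_0|+1-|\sigma|$ and $|f_{2k}|=|\tau|$. Propagating the recursion from left to right yields a linear relation of the form $|\sigma|-|\tau|=C$ for a constant $C$ equal to the alternating sum of the $|V_i'|-|W_i'|$; since each $|V_i|$ and $|W_i|$ is shifted by one in $\Tdec^+$, $C$ coincides with its counterpart $C_0$ for $\ctt(\Tdec)$. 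Hence the cases $(\sigma,\tau)\in\{(\sss',\ttt'),(\sss'',\ttt'')\}$ satisfy the same parity condition as $(\sss,\ttt)$ does for $\ctt(\Tdec)$, while $(\sigma,\tau)\in\{(\sss'',\ttt'),(\sss',\ttt'')\}$ shift the parity by one; in those two cases the generator set is empty and the matrix entries vanish.

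For the two non-vanishing cases, I plan to show that the lifts of any generator $x^\circ\in\ee{\sss}\ctt(\Tdec)\ee{\ttt}$ to generators in $\ee{\sigma}\ctt(\Tdec^+)\ee{\tau}$ contribute a factor $1-q^{-2}$ in Euler characteristic. I would identify two canonical classes of lifts: the \emph{horizontal lift}, which routes the new top strand strictly above all existing strands of $x^\circ$ and so introduces no new crossings, contributing bigrading shift $(0,0)$; and the \emph{swap lift}, which exchanges the new top strand with one adjacent strand of $x^\circ$, producing one new black-black crossing and two new black-red crossings with the new right-oriented red strand whose net contribution to $(M,2A)$ computes to $(-1,-2)$ via the formulas of Subsection \ref{ssec:gen}. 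Summing yields $1+(-q^{-2})=1-q^{-2}$, exactly the required factor.

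The hard part will be verifying that all other potential lifts either violate the minimality-of-crossings condition in a generator's diagram or pair up cancelling under a combinatorial involution (e.g., one that shifts the ``swap height'' by one unit along the tangle), so that such non-canonical lifts contribute zero to the Euler characteristic. Rather than grind through this case analysis over all elementary tangle types and partial-bijection patterns, I expect the cleanest route is to construct directly a bigrading-preserving $\Ft$-module splitting
\[
\ee{\sigma}\ctt(\Tdec^+)\ee{\tau}\;\cong\;\ee{\sss}\ctt(\Tdec)\ee{\ttt}\otimes_{\Ft}(\Ft\oplus\Ft[1]\{2\}),
\]
whose right-hand factor has Euler characteristic $1-q^{-2}$. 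Such a splitting is naturally suggested by Theorem A: adding a horizontal right-oriented strand corresponds on the representation side to tensoring $V_{\PP}\otimes L(\lambda_{n+1})$ with a copy of the vector representation $V$, which exactly doubles the $K_0$-rank.
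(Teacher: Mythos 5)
Your overall strategy---computing each entry as a graded Euler characteristic via Corollary \ref{cor:chi} and producing the factor $1-q^{-2}$ from a two-term lift plus cancellation of everything else---is the same as the paper's, but both key steps have genuine gaps. For the vanishing entries: the weight relation you derive is an equality $|\sigma|-|\tau|=C$ (with the same constant $C$ for $\Tdec$ and $\Tdec'$, since each $|V_i|$ and $|W_i|$ grows by one), not a parity condition, and it does \emph{not} make the mixed idempotent pieces empty. When $|\sss|-|\ttt|=C\mp1$, the pieces $\ee{\sss''}\ctt(\Tdec')\ee{\ttt'}$ and $\ee{\sss'}\ctt(\Tdec')\ee{\ttt''}$ can be nonzero: already for $\Tdec$ a single trivial strand and $\sss=\emptyset$, $\ttt=\{0\}$, the space $\ee{\{2\}}\ctt(\Tdec^+)\ee{\{0\}}$ has plenty of generators. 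The proposition asserts these entries vanish for \emph{all} $\sss,\ttt$, and in these cases the vanishing comes from generators cancelling in pairs in the Euler characteristic (in the paper, the two explicit exchange involutions on the strands ending at the new middle points and at $a_{m+1},c_{n+1}$), not from emptiness; your argument gives nothing here.

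For the nonvanishing entries, you correctly identify the two surviving lifts differing by $[1]\{2\}$, but you explicitly defer the verification that all other lifts cancel, and the shortcut you propose---a bigraded $\Ft$-module splitting $\ee{\sigma}\ctt(\Tdec^+)\ee{\tau}\cong\ee{\sss}\ctt(\Tdec)\ee{\ttt}\otimes(\Ft\oplus\Ft[1]\{2\})$---is false at the level of vector spaces: for the trivial one-strand tangle and $\sss=\ttt=\{0\}$, the left side has dimension $6$ while the right side has dimension $4$, so the extra generators must be cancelled, which is exactly the case analysis you skipped. Appealing to Theorem A to motivate the splitting is also circular, since Theorem A is proved using this proposition. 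What is missing is precisely the content of the paper's proof: reduce to a single elementary tangle (via the pairing theorem), then cancel in pairs all generators in which the two new middle points are not connected to $a_{m+1}$ or $c_{n+1}$, next cancel those in which the other new boundary point is also occupied, and finally check that the remaining generators form two copies of the generators of $\ctt(\Tdec)$, differing by the shift $[1]\{2\}$ and lying in the idempotent pieces $(\sss',\ttt')$ and $(\sss'',\ttt'')$; this simultaneously yields the factor $(1-q^{-2})$ and the vanishing of the mixed entries.
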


\begin{proof}
We provide the proof for the case $\Tdec' = \Tdec^+$. The other three cases are analogous.

It suffices to prove the proposition for an elementary tangle. {The general case then follows from Theorem~\ref{thm-pairing} and from the fact that tensoring $\tDA$ bimodules corresponds to composing homomorphisms on $K_0$ (i.e. multiplying matrices).}

 Let $\T$ be an elementary tangle, and let $V_0 = \{a_0, \ldots, a_m\}$, $W_1 = \{b_0\ldots, b_l\}$, $V_1= \{c_0\ldots c_n\}$ be the three sets of points for $\T$ as in Section \ref{ssec:gen}, with each of $a_i, b_i, c_i$ indexed by its height. 

Recall that the generators for $\T$ are pairs of partial bijections $V_0\to W_1, W_1\to V_1$, such that each $b_i$ is either in the range of the former, or in the domain of the latter, but not both. The generators for $\T'$ are then pairs of partial bijections $V_0'\to W_1', W_1'\to V_1'$, where  $V_0' = V_0\sqcup \{a_{m+1}\}$, $W_1' = W_1\sqcup \{b_{l+1}\}$, $V_1' = V_1\sqcup \{c_{n+1}\}$.
We think of a generator $x$ for $\T'$  as $x = \{(p_1, q_1),\ldots, (p_{l+1}, q_{l+1})\}$, where $p_i$ and $q_i$ are matched by one of the two bijections for $x$, i.e. they are connected by a black strand in a diagram for $x$. We enumerate pairs that cancel out in Euler characteristic, and list the remaining ones and their bigradings.

Generators for which $b_l$ and $b_{l+1}$ do not connect to $a_{m+1}$ or $c_{n+1}$ cancel out in pars -- if $x$ is such a generator with $(b_l, p), (b_{l+1}, q) \in x$, then $x$ cancels out  $y = x\setminus \{(b_l, p), (b_{l+1}, q)\} \cup \{(b_l, q), (b_{l+1}, p)\}$. See Figure \ref{fig:cancel_2}. One can explicitly compute the bigradings of $x$ and $y$, or just observe that one generator is in the differential of the other. 

\begin{figure}[h]
 \centering
  \labellist
         \pinlabel  $\to$ at 100 35
         \pinlabel  $\to$ at 345 35
         \pinlabel  $\to$ at 590 35
       \endlabellist
 \includegraphics[scale = .65]{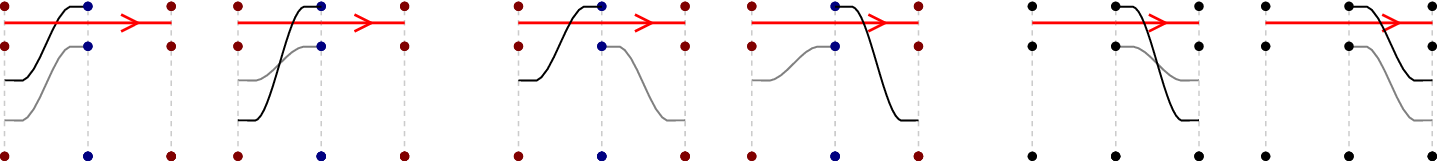} 
 \vskip .1 cm
       \caption{Generators for which $b_l$ and $b_{l+1}$ do not connect to $a_{m+1}$ or $c_{n+1}$ cancel out in pairs. Only the two strands at which a pair differs are shown.}\label{fig:cancel_2}
\end{figure}

Generators for which  $b_{l+1}$ connects to a point in $\{p,q\} = \{a_{m+1}, c_{n+1}\}$, and the other point in $\{a_{m+1}, c_{n+1}\}$ is also an endpoint of a strand also cancel out in pairs -- if  $(b_{l+1}, p), (q, t)\in x$, then $x$ cancels out $y = x\setminus \{(b_{l+1}, p), (q, t)\}\cup \{(b_{l+1}, q), (p, t)\}$. See Figure \ref{fig:cancel_1}.

\begin{figure}[h]
 \centering
 \labellist
             \pinlabel  $\to$ at 115 35
       \endlabellist
 \includegraphics[scale = .65]{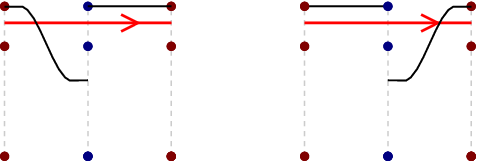} 
 \vskip .1 cm
       \caption{Generators for which $b_l$ and $b_{l+1}$ connect to points in $V_0\cup V_1\setminus \{a_{m+1}, c_{n+1}\}$ cancel out in pars. Only the two strands at which a pair differs are shown.}\label{fig:cancel_1}
\end{figure}

We pair up the remaining generators as $(x,y)$, so that $M(x)-M(y)=1$ and $A(x)-A(y)=1$, see Figure \ref{fig:cancel_3} (calculating the relative bigrading for each pair is an exercise left to the reader). 
\begin{figure}[h]
 \centering
  \labellist
           \pinlabel  \small{$[1]\{2\}$} at 114 185
         \pinlabel  \small{$[1]\{2\}$} at 114 45
                  \pinlabel  \small{$[1]\{2\}$} at 434 185
         \pinlabel  \small{$[1]\{2\}$} at 434 45
         \pinlabel  $\to$ at 115 170
         \pinlabel  $\to$ at 116 30
                  \pinlabel  $\to$ at 435 170
         \pinlabel  $\to$ at 435 30
       \endlabellist
 \includegraphics[scale = .65]{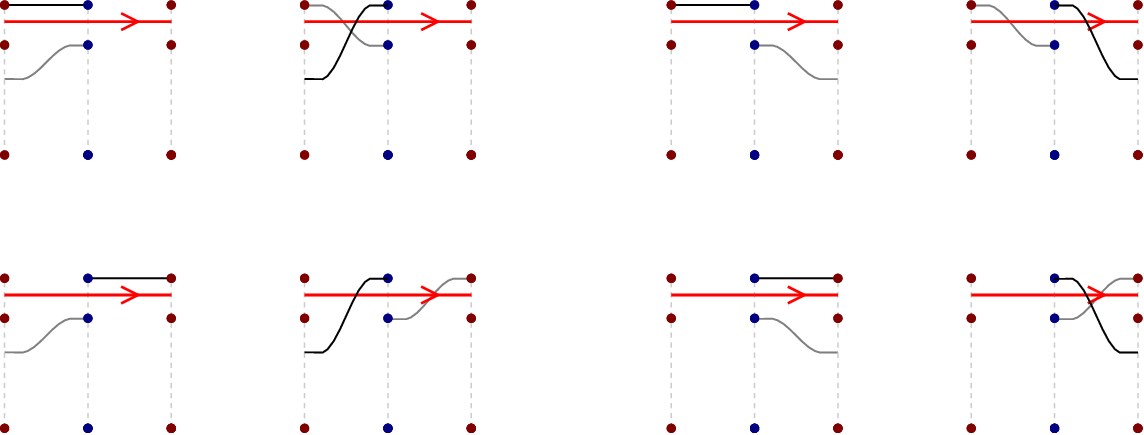} 
 \vskip .1 cm
       \caption{Remaining generators that cannot be cancelled, grouped into pairs $(x,y)$. Only the two strands at which $x$ and $y$ differ are shown.}\label{fig:cancel_3}
\end{figure}
The $x$ generators containing the strand $(a_{m+1}, b_{l+1})$ do not have a strand ending at $c_{n+1}$, so they correspond bijectively with the generators of $\ctt(\Tdec)$, by removing the top of the diagram, namely, the strand  $(a_{m+1}, b_{l+1})$, the points $a_{m+1}, b_{l+1}, c_{n+1}$, and the top (red) strand of $\Tdec'$. Thus, 
\[[\ctt(\Tdec')]_{\sss', \ttt'} = (1-q^{-2}) [\ctt(\Tdec)]_{\sss', \ttt'}.\]
Similarly, the $x$ generators containing the strand $(c_{n+1}, b_{l+1})$ correspond bijectively with the generators of  $\ctt(\Tdec)$, so 
\[[\ctt(\Tdec')]_{\sss'', \ttt''} = (1-q^{-2}) [\ctt(\Tdec)]_{\sss', \ttt'}.\qedhere\]
\end{proof}

To summarize, the computations from this section agree with the computations from Section~\ref{subsubsec-rib} in the following way. Let $\Tdec$ be a decomposition for an $(m,n)$-tangle $\T$. Let $\T^{rot}$ be the tangle obtained by rotating $\T$ clockwise by $\pi/2$ and reversing the orientation. Then, under the identification \eqref{eqn-identification},
\begin{equation}\label{eqn:thmA}
[\ctt(\Tdec)] = q^{\frac{m-n}{2}}(1-q^2)^{|\Tdec|} Q(\T^{rot})\otimes \id_{L(\lambda_{\bdy^1\T})},
\end{equation} 
where $Q$ is the Reshetikhin-Turaev invariant from Section~\ref{subsubsec-Q}. (Note that $\lambda_{\bdy^1\T} = \lambda_{-\bdy^0\T}$.) See Figure~\ref{fig:thma}.

\begin{figure}[h]
 \centering
  \labellist
            \pinlabel \textcolor{red}{$\Tdec$} at 128 90
            \pinlabel  \textcolor{red}{$\rotatebox{270}{$\T$}$} at 450 80
            \pinlabel $K_0(\AA(-\bdy^0\T))\xleftarrow{[\ctt]}K_0(\AA(\bdy^1\T))$ at 126 25
            \pinlabel $=$ at 267 80
            \pinlabel $V_{\bdy^1\T}$ at 516 30
            \pinlabel $\Big\uparrow$ at 516 80
            \pinlabel $V_{-\bdy^0\T}$ at 516 130
            \pinlabel $Q$ at 504 80
            \pinlabel $\otimes$ at 568 80
            \pinlabel $L(\lambda_{\bdy^1\T})$ at 624 30
            \pinlabel $\Big\uparrow$ at 624 80
            \pinlabel $L(\lambda_{\bdy^1\T})$ at 624 130
            \pinlabel $\id$ at 636 80
            \pinlabel $q^{\frac{m-n}{2}}(1-q^2)^{|\Tdec|}$ at 339 80
       \endlabellist
\includegraphics[scale = .67]{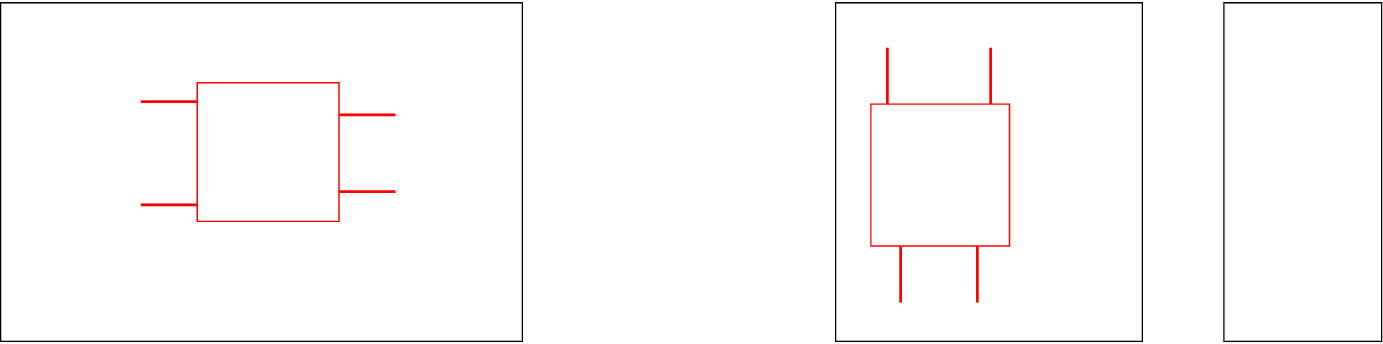} 
 \vskip .5 cm
       \caption{The more precise version of Figure~\ref{fig:RTtangle}, showing the relation between tangle Floer homology and the Reshetikhin-Turaev invariant.}\label{fig:thma}
\end{figure}

This concludes the proof of Theorem A.

\subsection{Adding and removing black strands: $\EE$ and $\FF$}\label{subsec-add-black}

Fix a sign sequence $\PP\in\{\pm1\}^n$. In this subsection, we describe dg bimodules which induce maps on $K_0(\AA(\PP))$ which correspond to the action of $E,F \in U_q$ via the identification \eqref{eqn-identification}.

\begin{definition} A \emph{lowering generator} of weight $k$ associated to $\PP$ is a partial bijection $x:\{-1\}\sqcup[n]\to[n]$ defined on a subset with $k$ elements which contains $-1$.\end{definition}

The diagram associated to a lowering generator $x$ of weight $k$ is drawn as follows.  Draw $n$ horizontal red strands from $(0,i-\frac{1}{2})$ to $(1,i-\frac{1}{2})$ for $1\leq i\leq n$ (oriented according to $\PP$) and a black strand connecting $(0,i)$ to $(1,x(i))$ for each $i$ on which $x$ is defined.  We constrain such diagrams to rules analogous to those in Subsection \ref{ssec:alg} (no horizontal coordinate critical points, no triple intersections, minimal number of crossings).

If $x$ is a lowering generator of weight $k$ associated to $\PP$ defined on a subset $\ttt\subseteq \{-1\}\sqcup [n]$, let
\begin{equation*}
\sssA{0}(x) = \ttt\setminus\{-1\}, \quad \sssA{1}(x) = x(\ttt).
\end{equation*}

\begin{definition} Let $\FF_k(\PP)$ be the $\Ft$-span of all lowering generators of weight $k$ associated to $\PP$.  Give it the structure of a bigraded chain complex by using the same Alexander and Maslov degree formulas as for generators of $\AA(\PP)$,
\begin{eqnarray*}
2A(x) &=& \diagup \hspace{-.375cm}{\color{red}{\nwarrow}}(x) + \diagdown \hspace{-.375cm}{\color{red}{\swarrow}}(x)
-\diagup \hspace{-.37cm}{\color{red}{\searrow}}(x) - \diagdown \hspace{-.37cm}{\color{red}{\nearrow}}(x),\\
M(x) & =& \diagup \hspace{-.35cm}\diagdown(x) - \diagup \hspace{-.37cm}{\color{red}{\searrow}}(x) -\diagdown \hspace{-.37cm}{\color{red}{\nearrow}}(x).
\end{eqnarray*}

We give $\FF_k(\PP)$ the structure of a dg bimodule over $(\AA_{k-1}(\PP), \AA_k(\PP))$ as follows.

If $a$ is a generator of $\AA_{k-1}(\PP)$, $\sssA{1}(a) = \sssA{0}(x)$, and the diagram obtained by concatenating that of $x$ with that of $a$ has a minimal number of crossings, let $a \cdot x = x \circ a'$, where $a':\sssA{0}(a) \sqcup \{-1\} \to \sssA{1}(a) \sqcup \{-1\}$ is defined by
\begin{equation*}
a'|_{\sssA{0}(a)} = a, \quad a'|_{\{-1\}} = \id_{\{-1\}}.
\end{equation*}
Otherwise, let $a \cdot x = 0$.  If $b$ is a generator of $\AA_k(\PP)$, $\sssA{1}(x) = \sssA{0}(b)$, and the relevant concatenated diagram has a minimal number of crossings, let $x \cdot b = b \circ x$; otherwise, let $x \cdot b = 0$.

The differential of a generator of $\FF_k(\PP)$ is the sum of crossing resolutions, just as for the differential on the algebra $\AA(\PP)$.

Let $\EE_k(\PP)$ be the bimodule over $(\AA_k(\PP), \AA_{k-1}(\PP))$ opposite to $\FF_k(-P)$.  Let
\begin{equation*}
\EE(\PP) = \bigoplus_{k=0}^n \EE_k(\PP), \quad
\FF(\PP) = \bigoplus_{k=0}^n \FF_k(\PP).
\end{equation*}

When the choice of $\PP$ is understood or unimportant, we will sometimes write $\AA_k, \II_k, \EE_k,\FF_k$ for $\AA_k(\PP), \II_k(\PP), \EE_k(\PP),\FF_k(\PP)$.

\end{definition}

\begin{figure}[h]
 \centering
 \includegraphics[scale = .7]{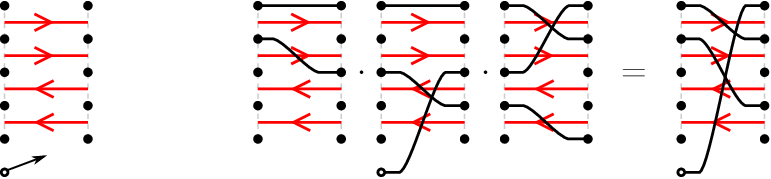} 
 \vskip .2 cm
       \caption{Left: the background diagram for the $(\AA(\PP),\AA(\PP))$ bimodule $\FF(\PP)$, when $\PP= (-, -, +, +)$. Right: an example of the dg bimodule action.}\label{fig:E_AA}
\end{figure}

\begin{figure}[h]
 \centering
 \includegraphics[scale = .7]{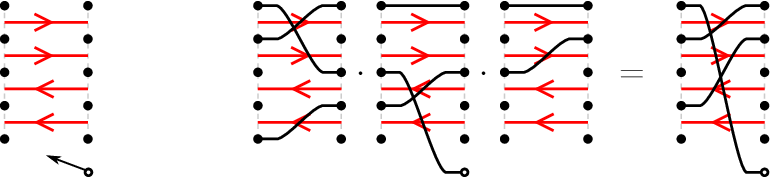} 
 \vskip .2 cm
       \caption{Left: the background diagram for the $(\AA(\PP),\AA(\PP))$ bimodule $\EE(\PP)$, when $\PP= (-, -, +, +)$. Right: an example of the dg bimodule action.}\label{fig:F_AA}
\end{figure}

\begin{proposition}\label{prop-bimodule-E}\begin{enumerate}
Fix a weight $k$, and let $G_k$ be the set of generators of $\FF_k$.
  \item For $i=1,2,\ldots,n$, let
  \begin{equation*}
  F^i = \mathrm{span}_{\Ft}\{x \in G_k \mid x(-1) \leq i\}.
  \end{equation*}
  Then $F^\bullet = (0 \subset F^0 \subset F^1 \subset F^2 \subset \cdots \subset F^n=\FF_k)$ is a filtration of $\FF_k$ by dg left submodules over $\AA_{k-1}$.  For each $i$,
  \begin{equation}
  F^i / F^{i-1} \cong \bigoplus_{\substack{\sss \subseteq [n]\setminus\{i\} \\ |\sss| = k-1}} \AA_{k-1}\ee{\sss}[-M(y_{i,\sss})]\{-2A(y_{i,\sss})\}.
  \end{equation}
  Here, $y_{i,\sss}$ is the generator defined on $\sss\sqcup\{i\}$ which is the identity when restricted to $\sss$ and takes $-1$ to $i$ (see Figure \ref{fig-yi}).
  \item For any subset $\ttt=\{t_1 < \ldots < t_k\}\subseteq[n]$, let
  \begin{equation*}
  F^i_\ttt = F^{t_i} \cap \FF_k\ee{\ttt}.
  \end{equation*}
  This gives the filtration $F^\bullet_\ttt$ on $\FF_k\ee{\ttt}$ induced by the filtration $F^\bullet$ (less some redundant pieces which yield subquotients equal to $0$), and its subquotients are
  \begin{equation}
  F^i_\ttt / F^{i-1}_\ttt \cong \AA_{k-1}\ee{\ttt\setminus\{t_i\}}[-M(y_{t_i,\ttt\setminus\{t_i\}})]\{-2A(y_{t_i,\ttt\setminus\{t_i\}})\}.
  \end{equation}
  \item Let
  \begin{equation*}\begin{split}
  \FF' & = \mathrm{span}_{\Ft}\{x \in G_k \mid x\text{ is defined at }0\}, \\
  \FF'' & = \mathrm{span}_{\Ft}\{x \in G_k \mid x\text{ is not defined at }0\}. \\
  \end{split}\end{equation*}
  Then as dg right submodules over $\AA_k$, $\FF=\FF'\oplus\FF''$.  The submodule $\FF'$ is acyclic, and 
  \begin{equation}
  \FF'' \cong \bigoplus_{\substack{\sss \subseteq [n] \\ |\sss|=k\text{ and }0\in\sss}} \ee{\sss}\AA_k.
  \end{equation}
  \item Let $\ttt \subseteq [n]$ with $|\ttt| = k-1$.  If $0 \in \ttt$, then $\ee{\ttt}\FF_k$ is acyclic.  If $0 \notin \ttt$, then $\ee{\ttt}\FF_k \cong \ee{\ttt\sqcup\{0\}} \AA_k$.
\end{enumerate}
Opposite (interchanging left with right actions) statements to (1)--(4) hold for the bimodule $\EE_k$.
\end{proposition}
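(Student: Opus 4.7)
Parts (2) and (4) will follow from (1) and (3) by restricting to idempotented pieces of $\FF_k$, and the analogous statements for $\EE_k$ follow formally by passing to the opposite bimodule. So the substance lies in (1) and (3).

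For Part (1), to see that $F^\bullet$ is a filtration by dg left $\AA_{k-1}$-submodules, I verify two closure properties. Closure under the left action is immediate: by definition $a' = a \sqcup \id_{\{-1\}}$, so $(a \cdot x)(-1) = x(-1)$. Closure under the differential is the key geometric observation: because $-1$ is lower than all other domain points, the special strand begins below every other black endpoint; resolving any crossing between the special strand and another black strand therefore replaces the endpoint $x(-1)$ with a strictly lower one, so $d$ can only decrease $x(-1)$. To identify the subquotients, for each $\sss \subseteq [n]\setminus\{i\}$ with $|\sss| = k-1$ I consider the left-module map
\begin{equation*}
\AA_{k-1}\ee{\sss} \longrightarrow F^i, \qquad a \longmapsto a \cdot y_{i,\sss},
\end{equation*}
whose image lies in the span of generators with $x(-1)=i$ and codomain $\sss \sqcup \{i\}$. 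Because every crossing of $y_{i,\sss}$ is between the special strand and an identity strand, its resolution lowers $x(-1)$, so $d(y_{i,\sss}) \in F^{i-1}$, and the map descends to a dg map into $F^i/F^{i-1}$. Summing over $\sss$ yields a bijection on basis elements of the subquotient, and the bigrading shifts $-M(y_{i,\sss})$, $-2A(y_{i,\sss})$ are forced by requiring the idempotent $\ee{\sss}$ (in bidegree $(0,0)$) to correspond to the generator $y_{i,\sss}$.

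For Part (2), right multiplication by $\ee{\ttt}$ commutes with the left action and the differential, so the filtration descends to $\FF_k\ee{\ttt}$. The right idempotent constrains the codomain to equal $\ttt$; since $x(-1)$ is one element of this codomain, only the levels $i \in \ttt$ produce nonzero subquotients, and within each such level $i = t_j$ only the summand indexed by $\sss = \ttt\setminus\{t_j\}$ contributes.

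For Part (3), both $\FF'$ and $\FF''$ are closed under the right $\AA_k$-action (which permutes the codomain while preserving the domain) and under the differential (crossing resolutions preserve the domain of a partial bijection). The identification $\FF'' \cong \bigoplus_{0\in\sss} \ee{\sss}\AA_k$ is realized by the relabeling $-1 \mapsto 0$: a generator of $\FF''$ has domain $\{-1\}\sqcup\sss'$ with $0 \notin \sss'$, and because no other black strand occupies height $0$, the relabeling is a diagram-preserving bijection onto ordinary algebra generators with domain $\{0\}\sqcup \sss'$. For acyclicity of $\FF'$, I construct a degree $-1$ nullhomotopy $h:\FF' \to \FF'$ on generators by setting $h(x)=0$ when the strands from $-1$ and from $0$ already cross, and $h(x)=x_c$ otherwise, where $x_c$ is the unique generator obtained by introducing the $(-1,0)$-crossing (equivalently, swapping $x(-1)$ and $x(0)$). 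The identity $dh + hd = \id_{\FF'}$ then reduces to the combinatorial assertion that introducing the $(-1,0)$-crossing commutes with every other crossing resolution: the term of $dh(x)$ resolving the newly-introduced crossing returns $x$, while all other terms cancel pairwise against terms of $hd(x)$.

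Part (4) is now immediate: left multiplication by $\ee{\ttt}$ respects the splitting $\FF_k = \FF' \oplus \FF''$. If $0 \in \ttt$ then $\ee{\ttt}\FF_k \subseteq \FF'$, hence acyclic. If $0 \notin \ttt$, only the summand of $\FF''$ indexed by $\ttt\sqcup\{0\}$ meets $\ee{\ttt}\FF_k$, yielding the isomorphism $\ee{\ttt}\FF_k \cong \ee{\ttt\sqcup\{0\}}\AA_k$.

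\textbf{Main obstacle.} The technical heart is the verification of $dh + hd = \id$ for the nullhomotopy in Part (3). This is a Koszul-type cancellation: I must check that the operations of introducing versus resolving the $(-1,0)$-crossing commute with every other crossing resolution under the minimal-intersection convention, so that all terms except the identity contribution cancel in pairs. Every other part of the proof is bookkeeping or restriction from (1) and (3).
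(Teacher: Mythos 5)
Your proposal follows the paper's proof closely in structure: for (1), (2) and (4) you use the same observations (the differential cannot increase $x(-1)$, the left action fixes $x(-1)$, subquotients are identified by stripping off the special strand, and the idempotented statements follow by restriction); your explicit map $a \mapsto a \cdot y_{i,\sss}$ is just a concrete form of the paper's isotopy identification, and your relabeling $-1\mapsto 0$ for $\FF''$ is the paper's ``obvious isotopy.'' For (3), your contracting homotopy $h$ (introduce/remove the $(-1,0)$-crossing) implements exactly the cancellation that the paper packages as the factorization $\FF'\cong F'_0\otimes_{\bfk}(\bfk\xrightarrow{1}\bfk)$, so the route is essentially the same.

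One caveat, since you single this out as the main obstacle: the lemma you propose to verify --- that introducing the $(-1,0)$-crossing commutes with every other crossing resolution, so that all non-identity terms of $dh(x)+hd(x)$ cancel in pairs --- is false as literally stated, because the differential does \emph{not} preserve the crossed/uncrossed splitting. For instance, in $\FF_3(\PP)$ with $|\PP|=4$, take $x$ given by $-1\mapsto 3$, $0\mapsto 4$, $1\mapsto 2$ (uncrossed): resolving the crossing of the $0$- and $1$-strands is minimal (black--red crossings stay at eight, black--black drops from two to one) and yields the \emph{crossed} generator $-1\mapsto 3$, $0\mapsto 2$, $1\mapsto 4$. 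This term of $d(x)$ is annihilated by $h$ rather than cancelling against a term of $d(x_c)$, and the ``corresponding'' resolution of $x_c$ is killed by the minimal-intersection rule, so the bookkeeping is not a crossing-by-crossing commutation. What is true, and suffices, is: (i) resolving the $(-1,0)$-crossing is never obstructed (no black or red strand starts between heights $-1$ and $0$), so for every crossed $w$ the uncrossed partner appears in $d(w)$ with coefficient $1$; and (ii) every \emph{other} minimal resolution of a crossed generator is again crossed, because a resolution of a crossing of the $(-1)$-strand that would drop $x(-1)$ below $x(0)$ loses two additional crossings with the $0$-strand and hence is not minimal. From (i) and (ii) you get $dh(w)=0$ and $hd(w)=w$ for all crossed $w$, and for uncrossed $x$ the identity $(dh+hd)(x)=x$ then follows formally over $\Ft$ by applying $d^2=0$ to $x_c$. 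With that repair your homotopy argument is complete; the paper's one-line tensor factorization glosses over the same subtlety.
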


 \begin{figure}[h]
 \centering
 \includegraphics[scale = .7]{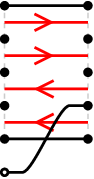} 
\vskip .2 cm
       \caption{The generator $y_{i,\sss}$ for $i=1$, $\sss=\{0,4\}$, when $\PP= (-, -, +, +)$.}
\label{fig-yi}
\end{figure}

\begin{proof}
For (1), note that $F^i$ is a filtration because the differential acting on a generator $x$ cannot increase $x(-1)$.  The subquotient is as described because in the diagram associated to any lowering generator, we can isotope the strand originating at $-1$ so as to make all its crossings to the left or to the right of all other crossings.  Since the differential on a particular subquotient does not change any crossing of this strand, doing such an isotopy for all lowering generators sending $-1$ to $i$ and for fixed right-side idempotent $\ee{\sss}$ yields a dg left module isomorphic to $\AA(\PP)\ee{\sss\setminus\{i\}}$ with the grading shift as described.  From this description, (2) follows easily too.

For (3) and (4), we can factor $F'$ as
\begin{equation*}
F'_0 \otimes_{\bfk} \left(\xymatrix{0 \ar[r] & \bfk \ar[r]^-{1} & \bfk \ar[r] & 0}\right),
\end{equation*}
where $F'_0$ is the span of all lowering generators in $F'$ with $x(-1) < x(0)$.  Hence $F'$ is acyclic.  The description of $F''$ follows from the obvious isotopy.

The analogous results for $\EE_k$ follow immediately, as the bimodule $\EE_k(P)$ is by definition the opposite bimodule to $\FF_k(-P)$.
\end{proof}

\begin{corollary}\label{cor:EF-cofibrant}
 As a dg left module over $\AA_{k-1}$, $\FF_k$ is cofibrant.  As a dg right module over $\AA_{k-1}$, $\EE_k$ is cofibrant.\end{corollary}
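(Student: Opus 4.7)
The plan is to leverage the filtration $F^\bullet$ from part (1) of Proposition \ref{prop-bimodule-E}. Since each primitive idempotent $\ee{\sss}$ is an idempotent element of $\AA_{k-1}$, the dg left module $\AA_{k-1}\ee{\sss}$ is a direct summand of the free dg module $\AA_{k-1}$, so it is cofibrant. Any grading shift of a cofibrant module is again cofibrant, and arbitrary direct sums of cofibrant modules are cofibrant (the lifting property in the definition of cofibrant is preserved under these operations). Therefore each subquotient $F^i/F^{i-1}$ appearing in part (1) of Proposition \ref{prop-bimodule-E} is cofibrant as a dg left $\AA_{k-1}$-module.

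The main step is then to propagate cofibrancy up the filtration. For each $i$, consider the short exact sequence
\begin{equation*}
0 \longrightarrow F^{i-1} \longrightarrow F^i \longrightarrow F^i/F^{i-1} \longrightarrow 0
\end{equation*}
of dg left $\AA_{k-1}$-modules. Because $F^i/F^{i-1}$ is cofibrant (hence in particular K-projective), this sequence splits up to homotopy, and $F^i$ is quasi-isomorphic --- in fact isomorphic --- to the mapping cone of a morphism $(F^i/F^{i-1})[-1] \to F^{i-1}$. Cofibrant modules are closed under shifts and mapping cones, so if $F^{i-1}$ is cofibrant then so is $F^i$. Starting from $F^0$, which equals $F^0/F^{-1}$ and is cofibrant by the previous paragraph, a finite induction on $i$ shows that $\FF_k = F^n$ is cofibrant as a dg left $\AA_{k-1}$-module.

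The statement for $\EE_k$ as a dg right $\AA_{k-1}$-module follows from the opposite version of part (1) of Proposition \ref{prop-bimodule-E} (stated at the end of that proposition), by running the identical argument with left replaced by right throughout. I do not anticipate a real obstacle here; the only thing to be careful about is that one uses the definition of cofibrancy that is genuinely preserved under extensions in dg-module categories, for which the filtration argument is standard.
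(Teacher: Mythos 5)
Your overall strategy is the same as the paper's: use the filtration $F^\bullet$ of Proposition \ref{prop-bimodule-E}(1), whose subquotients are direct sums of shifted modules $\AA_{k-1}\ee{\sss}$ and hence cofibrant, and propagate cofibrancy up the finite filtration. The paper packages this induction by observing that the filtration splits when the dg structure is forgotten and citing Property (P) from \cite{KellerDeriving}; you unwind it into an explicit induction using closure of cofibrant modules under shifts and mapping cones. That outline is sound.

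The one step whose justification fails as stated is the splitting. Cofibrancy of $F^i/F^{i-1}$ does not make the surjection $F^i \to F^i/F^{i-1}$ split: the defining lifting property only applies against surjective \emph{quasi-isomorphisms}, and this surjection has kernel $F^{i-1}$, which is not acyclic in general, so neither cofibrancy nor K-projectivity of the quotient gives you anything here. Moreover, a splitting merely ``up to homotopy'' would only identify $F^i$ with a cone up to quasi-isomorphism, which is useless for your purpose, because cofibrancy is not a quasi-isomorphism invariant (every module is quasi-isomorphic to a cofibrant one). What you actually need, and what is true, is that the sequence splits in the category of \emph{graded} modules after forgetting the differential: the quotient is a finite direct sum of shifted $\AA_{k-1}\ee{\sss}$, hence projective as a graded module, so a graded splitting exists, and then $F^i$ is genuinely isomorphic as a dg module to the cone of a closed morphism $(F^i/F^{i-1})[-1]\to F^{i-1}$. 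This is precisely the paper's remark that the filtration splits when the dg structure is forgotten. With that correction your induction goes through (a degreewise-split injection with cofibrant cokernel is a cofibration, so the cone of a closed morphism between cofibrant modules is cofibrant), and the statement for $\EE_k$ as a right module follows by the opposite argument, as you say.
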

\begin{proof} By Proposition \ref{prop-bimodule-E}, $\FF_k$ admits a left dg module filtration with cofibrant subquotients.  This filtration splits when the dg structure is forgotten.  By \cite[Section 3]{KellerDeriving}, then, $\FF_k$ satisfies Property (P) and is thus cofibrant.
\end{proof}

We can now compute $[\FF(\PP)\dtensor{\AA(\PP)}-]$. Since $\AA\ee{\sss}$ is projective, we have that $\FF_k \dtensor{\AA(\PP)} A\ee{\sss} \simeq \FF_k \otimes_{\AA(\PP)} A\ee{\sss} = \FF_k\ee{\sss}$. In other words, the action 
\[[\FF_k\dtensor{\AA_k}-] : K_0(\AA(\PP)) \to K_0(\AA(\PP))\]
is given by 
\[[\FF_k\dtensor{\AA_k}-] : [\AA\ee{\sss}]\mapsto [\FF_k\ee{\sss}].\]
Writing $\sss=\{s_1 < \ldots < s_k\}\subseteq[n]$ and using the filtration from Proposition~\ref{prop-bimodule-E} (2), we have 
\[
[\FF_k\ee{\sss}] = [F^{1}_\sss] + [F^2_\sss / F^{1}_\sss] + \cdots + [F^k_\sss / F^{k-1}_\sss] 
= \sum_{i=1}^k[\AA_{k-1}\ee{\sss\setminus\{s_i\}}[-M(y_{s_i,\ttt\setminus\{s_i\}})]\{-2A(y_{s_i,\sss\setminus\{s_i\}})\}].
\]
Observe that $M(y_{s_i,\ttt\setminus\{s_i\}}) = i-1-s_i^+$ and $2A(y_{s_i,\sss\setminus\{s_i\}}) = s_i^- - s_i^+$, where $s_i^+$ is the number of plusses in the first $s_i$ elements of $P$, and $s_i^-$ is the number of minuses. 
So
\begin{equation}\label{eqn-f-matrix}
[\FF(\PP)\dtensor{\AA(\PP)}-]_{\ttt, \sss}=\begin{cases}
(-1)^{i-1-s_i^+}q^{s_i^- - s_i^+} & \text{if } \ttt = \sss\setminus \{s_i\}, \\
0 & \text{otherwise.}
\end{cases}
\end{equation}

The easiest way to describe the general pattern of the matrix for $[\FF(\PP)\dtensor{\AA(\PP)}-]$ and compare it to the matrix from  Subsection~\ref{subsec-gloneone} is by induction on the length of $\PP$.  Recall from the end of Subsubsection \ref{subsubsec-our-rep} that the reverse lexicographic ordering on subsets of $[n]$ has the following property: if we write the sequence of subsets as $\sss_1,\sss_2,\ldots,\sss_{2^{n+1}}$, then the first half ($\sss_1$ through $\sss_{2^n}$) is the sequence of subsets for $[n-1]$, and for $1 \leq i \leq 2^n$, we have $\sss_{2^n+i} = \sss_i \sqcup \{n\}$.

For $\PP = ()$, the empty sequence, we can directly see that $\FF\ee{\emptyset} = 0$ and $\FF\ee{\{0\}} \cong \AA\ee{\{0\}}$, so 
\[
[\FF(())] = \begin{pmatrix} 0 & 1 \\ 0 & 0 \end{pmatrix}.
\]

Let $\PP'$ be the subsequence of $\PP$ consisting of all but the last element.  Observe that for $s_i<n$, the quantities $s_i^+$ and $s_i^-$ are the same with respect to both sequences, so $[\FF(\PP)]$ has block form
\[[\FF(\PP)]=\left(
\begin{array}{c|c}
[\FF(P')] & D_{\PP}  \\ 
\hline
0 & [\FF(P')]
\end{array}\right).\]
By Equation~\ref{eqn-f-matrix}, the matrix $D_{\PP}$ is diagonal, with entry corresponding to $(\sss\setminus\{n\}, \sss)$ given by $(-1)^{|\sss|-1-p_+}q^{p_- - p_+}$, where $p_+$ is the number of plusses in $P$ and $p_-$ is the number of minuses.

The computation for $[\EE(\PP)\dtensor{\AA(\PP)}]$ is simpler. Since $\EE_k$ is right cofibrant, we can compute the induced functor $\EE_k\dtensor{\AA_k}-$ with an underived tensor product,
\begin{equation*}
\EE_k \dtensor{\AA_k} M \simeq \EE_k \otimes_{\AA_k} M.
\end{equation*}
In particular, $\EE_k \otimes_{\AA(\PP)} A\ee{\sss} = \EE_k\ee{\sss}$, so the action 
\[[\EE_k\dtensor{\AA_k}-] : K_0(\AA(\PP)) \to K_0(\AA(\PP))\]
is given by 
\[[\EE_k\dtensor{\AA_k}-] : [\AA\ee{\sss}]\mapsto [\EE_k\ee{\sss}].\]
By Proposition~\ref{prop-bimodule-E} (4) (the version for $\EE$), if $0\in \sss$, then $\EE_k\ee{\sss}$ is acyclic, so $[\EE_k\ee{\sss}]=0$, and if $0\notin \sss$, then $\EE_k\ee{\sss} = \AA_k\ee{\sss\sqcup \{0\}}$, so $[\EE_k\ee{\sss}] = [\AA_k\ee{\sss\sqcup \{0\}}]$. Thus, 
\[
[\EE(\PP)\dtensor{\AA(\PP)}-]_{\sss, \ttt}=\begin{cases}
1 & \text{if } \sss = \ttt\sqcup \{0\}, \\
0 & \text{otherwise.}
\end{cases}
\]
or, recursively, 
\[
[\EE(())] = \begin{pmatrix} 0 & 0 \\ 1 & 0 \end{pmatrix}, \quad
[\EE(\PP)]=\left(
\begin{array}{c|c}
[\EE(P')] & 0  \\ 
\hline
0 & [\EE(P')]
\end{array}\right).
\]

The above discussion and the computations of Subsection \ref{subsec-gloneone} imply the following.

\begin{corollary} The matrices of $[\EE(\PP)],[\FF(\PP)]$ with respect to the primitive basis equal the matrices of $E,F \in U_q$ acting on $V_{\PP} \otimes L(\lambda_{n+1})$ with respect to the basis of Subsection \ref{subsec-gloneone}.\qed
\end{corollary}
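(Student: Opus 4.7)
The plan is induction on $n = |\PP|$, exploiting the parallel block recursions satisfied by the two pairs of matrices.

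For the base case $\PP = ()$, the representation $V_{()} \otimes L(\lambda_1) = L(\lambda_1)$ has $\langle h_1 + h_2, \lambda_1 \rangle = 1$, so $[\lambda_1]_q = 1$ and the exterior-algebra model gives $\ell_0 = \ell_F = e_1$. The ordered basis $B = (\ell_0, 1)$ corresponds under \eqref{eqn-identification} to the ordered primitive basis $(\ee{\emptyset}, \ee{\{0\}})$ of $\AA(())$, and writing out $E$ and $F$ on $B$ directly reproduces the $2 \times 2$ matrices $[\EE(())], [\FF(())]$ listed in the paragraph preceding the corollary.

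For the inductive step, I compare the block forms displayed at the start of the subsection with \eqref{eqn-E-F-matrix-induction}. By induction, $[\EE(\PP')] = [E]_{B'}$ and $[\FF(\PP')] = [F]_{B'}$. Hence $[\EE(\PP)]$ and $[E]_B$ are both block-diagonal with identical diagonal blocks and agree immediately, while $[\FF(\PP)]$ and $[F]_B$ are both block upper-triangular with identical diagonal blocks. The only remaining task is to match the off-diagonal diagonal matrices $D_\PP$ and $D_B$.

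The hard (but short) part is this last matching. Reading off from just after \eqref{eqn-E-F-matrix-induction}, the $(\sss, \sss \sqcup \{n\})$-entry of $D_B$ is $(-1)^{|\sss| + n + p(\undlam_{\leq n})} q^{-\undlam_{\leq n}}$. Using the convention $q^\mu = q^{\langle h_1 + h_2, \mu \rangle}$ from Subsection \ref{subsec-gloneone} together with $\lambda_i \in \{\ve_1, -\ve_2\}$ according to the sign $P_i$, I obtain $p(\undlam_{\leq n}) = p_-$ and $q^{-\undlam_{\leq n}} = q^{p_- - p_+}$, so the entry simplifies to $(-1)^{|\sss| + p_+} q^{p_- - p_+}$ since $n = p_+ + p_-$. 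On the tangle Floer side, the $i$-th diagonal entry of $D_\PP$ is $(-1)^{b_i - p_+} q^{p_- - p_+} = (-1)^{b_i + p_+} q^{p_- - p_+}$. The reverse lex order on subsets $\sss \subseteq [n]$ coincides with the natural order on $\{0, 1, \ldots, 2^{n+1} - 1\}$ via $\sss \mapsto \sum_{j \in \sss} 2^j$, whence the binary-digit count $b_i$ equals $|\sss|$ for the $i$-th subset. The two diagonal entries thus agree, and the induction closes. I do not foresee any real obstacle: essentially all content is supplied by the recursions preceding the corollary, and the only required bookkeeping is the unpacking of $q^\mu$, parity conventions, and the reverse-lex/binary correspondence just used.
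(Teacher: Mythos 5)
Your proof is correct and follows the same route as the paper: the paper's argument is precisely the comparison of the two parallel block recursions (base case $\PP=()$, block-diagonal/upper-triangular inductive step, and the identification of $D_B$ with $D_\PP$ via $p(\undlam_{\leq n})\equiv p_-$, $q^{-\undlam_{\leq n}}=q^{p_--p_+}$, and the reverse-lex/binary correspondence), which the paper leaves as "the above discussion and the computations of Subsection \ref{subsec-gloneone}" and you have simply written out.
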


This completes the proof of the first part of Theorem B. We continue with proving the categorified relations.

\begin{proposition}\label{prop:ef}
Let $\PP$ be a sign sequence of length $n$. We have homotopy equivalences 
\begin{align*}
\FF(\PP)\dtensor{\AA(\PP)} \FF(\PP)&\simeq 0,\\
\EE(\PP)\dtensor{\AA(\PP)} \EE(\PP)&\simeq 0,
\end{align*} 
and an exact triangle 
\begin{equation*}
\xymatrix{
\EE(\PP) \dtensor{\AA(\PP)} \FF(\PP) \ar[r] & \AA(\PP) \ar[r] & \FF(\PP)\dtensor{\AA(\PP)} \EE(\PP) \ar[r] & \EE(\PP) \dtensor{\AA(\PP)} \FF(\PP)[1],
}
\end{equation*}
Further, for any tangle $\T$ and any decomposition $\Tdec$ of $\T$,
\begin{equation*}\begin{split}
& \EE(-\bdy^0\T) \boxtimes \ctt(\Tdec) \simeq \AA(-\bdy^0\T) \boxtimes \ctt(\Tdec) \dtensor{\AA(\bdy^1\T)} \EE(\bdy^1\T), \\
& \FF(-\bdy^0\T) \boxtimes \ctt(\Tdec) \simeq \AA(-\bdy^0\T) \boxtimes \ctt(\Tdec) \dtensor{\AA(\bdy^1\T)} \FF(\bdy^1\T)
\end{split}\end{equation*}
as type $\tAA$ bimodules over $(\AA(-\bdy^0\T), \AA(\bdy^1\T))$.
\end{proposition}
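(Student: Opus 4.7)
My plan splits into three steps, one per claim. \emph{Step 1: acyclicity.} To show $\FF(\PP)\dtensor{\AA(\PP)}\FF(\PP)\simeq 0$, I use Corollary~\ref{cor:EF-cofibrant} to note that each $\FF_{k+1}$ is cofibrant as a left $\AA_k$-module, so the derived tensor product agrees with the naive $\FF_k\otimes_{\AA_k}\FF_{k+1}$. Decompose the first factor as a right $\AA_k$-module via Proposition~\ref{prop-bimodule-E}(3): $\FF_k=\FF'_k\oplus\FF''_k$ with $\FF'_k$ acyclic and $\FF''_k\cong\bigoplus_{\sss\ni 0,\,|\sss|=k}\ee{\sss}\AA_k$. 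The piece $\FF'_k\otimes_{\AA_k}\FF_{k+1}$ remains acyclic (an acyclic module tensored with a left-cofibrant one), while $\FF''_k\otimes_{\AA_k}\FF_{k+1}\cong\bigoplus_{\sss\ni 0}\ee{\sss}\FF_{k+1}$ has each summand acyclic by Proposition~\ref{prop-bimodule-E}(4). The same argument, applied to the opposite versions of those statements for the right-cofibrant $\EE$, gives $\EE\dtensor{\AA}\EE\simeq 0$.

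\emph{Step 2: the distinguished triangle.} I will construct bimodule chain maps $\varphi\colon\EE\otimes_\AA\FF\to\AA$ and $\psi\colon\AA\to\FF\otimes_\AA\EE$ at the level of strand diagrams. A generator of $\EE_k\otimes_{\AA_{k-1}}\FF_k$ is a pair of diagrams in which the raising strand from $\EE$ and the lowering strand from $\FF$ meet at the tensor-product interface; $\varphi$ sends such a pair to the algebra element obtained by deleting both extra strands whenever they concatenate into a single straight horizontal strand, and to zero otherwise. Dually, $\psi$ inserts a canceling raising/lowering pair into an algebra generator. After checking compatibility with the differentials and idempotent actions, the core task is to identify the mapping cone $C(\varphi)$ with $\FF\otimes_\AA\EE$ in the derived category; this then completes the triangle, with the connecting morphism $\FF\otimes_\AA\EE\to(\EE\otimes_\AA\FF)[1]$ extracted from the cone. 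I will establish the identification of $C(\varphi)$ with $\FF\otimes_\AA\EE$ by a filtration argument on both tensor factors, using Proposition~\ref{prop-bimodule-E}(3)--(4) to cancel the acyclic summands and match the remaining free pieces.

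\emph{Step 3: compatibility with tangles.} By the pairing theorem of Subsection~\ref{ssec:CTthms}, it suffices to treat the elementary tangles $\T_i$ of any chosen decomposition $\Tdec=(\T_1,\ldots,\T_k)$. For each elementary type (trivial, cup, cap, $\mathtt{e}$-crossing, $\mathtt{o}$-crossing), both sides of the asserted equivalence describe diagrams for $\ctt(\T_i)$ with one additional raising strand attached at the left or right boundary. I will exhibit a type $\tAA$ bimodule quasi-isomorphism that slides this extra strand across $\T_i$, constructed from local chain homotopies in the spirit of the strand-addition analysis of Proposition~\ref{prop:addstrand}, and verify that this sliding map is a bimodule morphism respecting the local differentials of $\ctt$. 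The analogous statement for $\FF$ follows by symmetry from the $\EE$ case.

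The principal obstacle is Step~2: once $\varphi,\psi$ are written down, matching $C(\varphi)$ with $\FF\otimes_\AA\EE$ in the derived category requires careful bookkeeping to cancel the acyclic contributions from the decompositions of both $\EE$ and $\FF$ provided by Proposition~\ref{prop-bimodule-E}. Steps~1 and~3 reduce fairly directly to applications of that proposition and local strand manipulations, but Step~2 encodes the genuine Clifford-type relation $EF+FE=(K-K^{-1})/(q-q^{-1})$ at the categorified level and will require combinatorial care.
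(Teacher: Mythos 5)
Your Step 1 is fine, and in fact it is a legitimate alternative to the paper's argument: the paper cancels generators of $\FF\otimes_{\AA}\FF$ directly in pairs along a component of the differential (resolving the crossing of the two lowering strands), whereas you deduce acyclicity from the one-sided structure results of Proposition~\ref{prop-bimodule-E}(3)--(4) together with Corollary~\ref{cor:EF-cofibrant}; both work, and since the acyclic pieces there are in fact contractible (they factor through $\bfk\xrightarrow{1}\bfk$), your version also yields the stated homotopy equivalence.

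The genuine gap is in Step 2, and it propagates to Step 3. Writing down $\varphi\colon\EE\otimes_{\AA}\FF\to\AA$ is the easy half (it is the paper's map $f$); the heart of the matter is identifying $C(\varphi)$ with $\FF\dtensor{\AA}\EE$ \emph{as bimodules} up to quasi-isomorphism, and your proposed ``filtration argument using Proposition~\ref{prop-bimodule-E}(3)--(4) to cancel the acyclic summands and match the remaining free pieces'' does not obviously do this. Those decompositions are only one-sided: the filtration of $\FF_k$ is by left $\AA_{k-1}$-submodules and the splitting $\FF=\FF'\oplus\FF''$ is only of right $\AA_k$-modules (and dually for $\EE$), so none of them is preserved by the other algebra action on $\FF\otimes_{\AA}\EE$ or on $C(\varphi)$. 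Matching the ``remaining free pieces'' one-sidedly (or on Euler characteristics) does not determine the object in the derived category of $(\AA,\AA)$-bimodules, and no bimodule map between $C(\varphi)$ and $\FF\otimes_{\AA}\EE$ is ever produced in your sketch. This is precisely the point where the paper abandons pure strand combinatorics: it tensors with $\ctt(\id_{\PP})$, interprets $\FF\boxtimes(\ctt(\id_{\PP})\otimes_{\AA}\EE)$ and $\AA\boxtimes\ctt(\id_{\PP})\otimes_{\AA}C(f)$ via Heegaard diagrams, and obtains the equivalence from handleslide invariance, afterwards stripping off the $(\Ft\oplus\Ft[1]\{2\})^{\otimes n}$ factors. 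Likewise in Step 3, the ``sliding'' quasi-isomorphism of $\EE$ or $\FF$ across an elementary tangle is asserted but not constructed (Proposition~\ref{prop:addstrand} is only an Euler-characteristic statement and gives no chain map), and you do not address the cofibrancy needed to replace $\ctt(\Tdec)\dtensor{\AA(\bdy^1\T)}\EE(\bdy^1\T)$ by an underived tensor product --- the paper handles this by appending a trivial elementary piece so that $\ctt(\Tdec)$ is right cofibrant, and then again uses a handleslide argument. As written, the proposal reduces the two hardest identifications to unexecuted plans whose feasibility by purely combinatorial means is exactly what would need to be demonstrated.
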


\begin{proof}
For the first two equivalences and the exact triangle, we fix a sign sequence $P$ of length $n$, and denote $\EE(\PP), \FF(\PP), \AA(\PP)$ by $\EE, \FF, \AA$, respectively. 

Since $\FF$ is left cofibrant, we have 
\[\FF\dtensor{\AA} \FF \simeq \FF \otimes_{\AA} \FF.\]
 We can represent the underived tensor product $\FF\otimes_{\AA} \FF$ graphically by concatenating diagrams. Let $x_1, x_2:\{-1\}\sqcup[n]\to[n]$ be generators of $\FF$ such that $y=(x_1, x_2)$ is nonzero in $\FF\otimes_{\AA} \FF$ and such that  $x_2\circ x_1(-1) < x_2(-1)$.  This means the respective black strands in a diagram for $(x_1, x_2)$ cross, or, equivalently, the black strands in a diagram for $x_2$ cross. Let $y_2$ be the generator whose diagram is obtained from that of $x_2$ by resolving that crossing. Then $f(x)=(x_1, y_2)$ is the generator whose diagram is obtained by resolving the respective crossing in $(x_1, x_2)$, see Figure \ref{fig:cancelF}. Note that $f(x)$ is a term in the differential of $x$, and $f$ is a bijection from those generators for $\FF\otimes_{\AA} \FF$ for which the black strands starting at height $-1$ cross to the remaining generators. Taking this component $f$ of the differential on  $\FF\otimes_{\AA} \FF$ cancels out all pairs of generators, and shows that $\FF\dtensor{\AA} \FF\simeq 0$. 
 \begin{figure}[h]
 \centering
 \includegraphics[scale = .7]{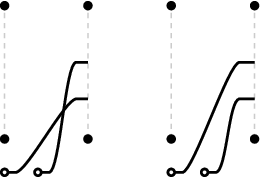} 
 \vskip .1 cm
       \caption{The differential cancels generators of $\FF\otimes \FF$ in pairs.}\label{fig:cancelF}
\end{figure}

Similarly, $\EE\dtensor{\AA} \EE\simeq 0$. 

Next, we turn to the exact triangle. 

Since $\EE$ is right cofibrant and $\FF$ is left cofibrant, $\EE\dtensor{\AA} \FF\simeq \EE\otimes_{\AA} \FF$. Diagrammatically, one can represent $\EE\otimes_{\AA} \FF$ by concatenating diagrams for $\EE$ and $\FF$, see Figure \ref{fig:EF_AA}. We can think of a strand diagram for a generator of  $\EE\otimes_{\AA}\FF$ as a partial bijection $x:\{-1\}\sqcup [n]\to \{-1\}\sqcup [n]$ with $-1$ both in the domain and range and $x(-1)\neq -1$.

 \begin{figure}[h]
 \centering
   \labellist
    	 \pinlabel $\otimes$ at 58 49
	 \pinlabel $=$ at 130 49
	 \pinlabel $\otimes$ at 319 49
	 \pinlabel $=$ at 391 49
   \endlabellist
 \includegraphics[scale = .7]{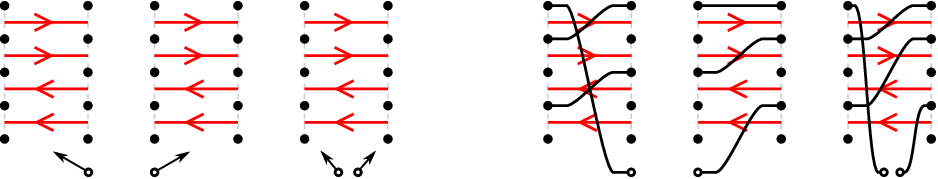} 
 \vskip .1 cm
       \caption{The bimodule $\EE\otimes_{\AA}\FF$, when $\PP= (-, -, +, +)$. Left: The background diagram for $\EE\otimes_{\AA}\FF$ is obtained by concatenating the background diagrams for $\EE$ and $\FF$. Right: an example of a nonzero generator of $\EE\otimes_{\AA}\FF$.}\label{fig:EF_AA}
\end{figure}

Define a map $f: \EE\otimes_{\AA}\FF\to \AA$ on generators as follows. If $x:\{-1\}\sqcup [n]\to \{-1\}\sqcup [n]$ is a generator for $\EE\otimes_{\AA}\FF$ with $x(-1)=0$ or $x(0) = -1$, then $f(x) = y$, where $y(x^{-1}) = x(-1)$ and $y|_{[n]\setminus\{x^{-1}(-1)\}} = x|_{[n]\setminus\{x^{-1}(-1)\}}$. Otherwise, $f(x) = 0$. For example, $f$ takes the generator shown at the right of Figure~\ref{fig:EF_AA} to $0$. See Figure \ref{fig:fEFA} for an example where $f(x)\neq 0$.

 \begin{figure}[h]
 \centering
   \labellist
    	 \pinlabel $f$ at 70 62
	  \pinlabel $\mapsto$ at 70 49
   \endlabellist
 \includegraphics[scale = .7]{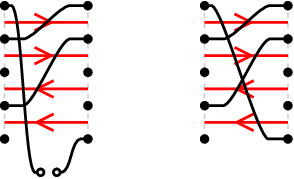} 
 \vskip .1 cm
       \caption{An example of the map $f$.}\label{fig:fEFA}
\end{figure}

Observe that the cone of $f$, $C(f)$, is generated over $\Ft$ by partial bijections $x:\{-1\}\sqcup [n]\to \{-1\}\sqcup [n]$ with $-1$ both in the domain and range, with bimodule structure as follows. The diagram associated to a generator $x$ for $C(f)$ consists of $n$ horizontal red strands from $(0, i-\frac 1 2 )$ to $(1, i-\frac 1 2)$ for $1\leq i\leq n$ (oriented according to $\PP$) and a black strand connecting $(0,i)$ to $(1, x(i))$ for each $i$ in the domain of $x$. The left and right algebra actions are given by concatenation, and the differential is given by resolving crossings, subject to the same relations as for the algebra. See Figure \ref{fig:FE_AA}.

 \begin{figure}[h]
 \centering
 \includegraphics[scale = .7]{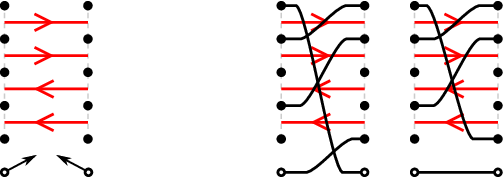} 
 \vskip .1 cm
       \caption{The bimodule $C(f)$ in the case $\PP= (-, -, +, +)$. Left: the background diagram for $C(f)$. Right: the generators for $C(f)$ corresponding to the two generators in Figure \ref{fig:fEFA}.}\label{fig:FE_AA}
\end{figure}

Below, we argue that $\FF\dtensor{\AA} \EE\simeq C(f)$.

By \cite[Theorem 5.4]{pv},
$\AA\boxtimes \ctt(\triv_{\PP})\simeq \AA\otimes (\Ft\otimes \Ft[1]\{2\})^{\otimes n}$, so 
\begin{align*}
(\FF\dtensor{\AA} \EE)\otimes (\Ft\otimes \Ft[1]\{2\})^{\otimes n} &\simeq \FF\dtensor{\AA} (\AA\boxtimes \ctt(\triv_{\PP}))\dtensor{\AA} \EE\\
&= ((\FF\dtensor{\AA} \AA)\boxtimes \ctt(\triv_{\PP}))\dtensor{\AA} \EE\\
&= (\FF \boxtimes \ctt(\triv_{\PP}))\dtensor{\AA} \EE\\
&= \FF \boxtimes( \ctt(\triv_{\PP})\dtensor{\AA} \EE)\\
&= \FF \boxtimes( \ctt(\triv_{\PP})\otimes_{\AA} \EE),
\end{align*}
where the last equality holds since $\ctt(\triv_{\PP})$ is both left and right cofibrant.

Given a generator $x$ of $\ctt(\triv_{\PP})$ and a generator $y$ of $\EE$, one can interpret $x\otimes y$ as a concatenation of diagrams by placing a diagram for $y$ to the right of a diagram for $x$ and scaling the right piece for $x$ and the diagram for $y$ by $\frac 1 2$. See Figure \ref{fig:idtensorE}.
 \begin{figure}[h]
 \centering
    \labellist
    	 \pinlabel $\otimes$ at 97 49
	 \pinlabel $=$ at 174 49
	 \pinlabel $\otimes$ at 435 49
	 \pinlabel $=$ at 510 49
   \endlabellist
 \includegraphics[scale = .7]{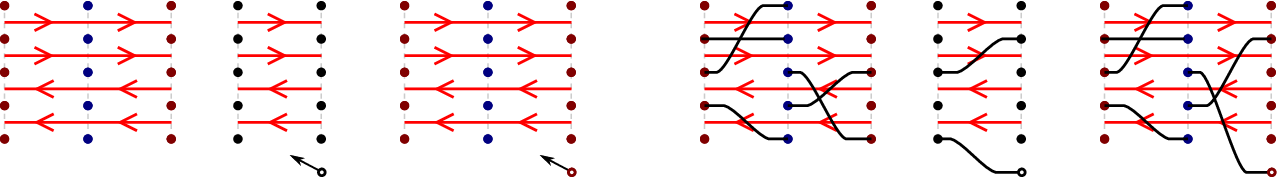} 
 \vskip .1 cm
       \caption{The $\tDA$ bimodule  $\ctt(\triv_{\PP})\otimes_{\AA} \EE$, when $\PP= (-, -, +, +)$. Left: The background diagram for $\ctt(\triv_{\PP})\otimes_{\AA} \EE$ is obtained by concatenating the even (right) piece of the background diagram for $\ctt(\triv_{\PP})$ with the background diagram for $\EE$. Right: The tensor product $(x_1, x_2)\otimes x_3$ of generators is given by concatenating a diagram for $x_2$ with a diagram for $x_3$.}\label{fig:idtensorE}
\end{figure}

Thus, we can represent $\FF \boxtimes( \ctt(\triv_{\PP})\otimes_{\AA} \EE)$ diagrammatically as follows. Place the background diagram for $\FF$ to the left of the odd piece for $\triv_{\PP}$ and the background diagram for $\EE$ to the right of the odd piece for $\triv_{\PP}$. Generators are sequences $(x_1,x_2,x_3)$ of partial bijections such that
\begin{itemize} 
  \item $x_1:\{-1\}\sqcup [n]\to [n]$ with $-1$ in the domain of $x_1$,
  \item $x_2:[n]\to [n]$,
  \item $x_3:[n]\to \{-1\}\sqcup [n]$ with $-1$ in the range of $x_3$, and
  \item for $i=1,2$, every element of $[n]$ is either in the domain of $x_{i+1}$ or in the range of $x_i$ but not both.
\end{itemize}
We draw diagrams for generators of $\FF \boxtimes( \ctt(\triv_{\PP})\otimes_{\AA} \EE)$ as we have done for $\AA(\PP)$, $\ctt(\Tdec)$, and so forth. The differential is defined to be the sum $\bdy_+ + \bdy_- + \bdy_m$, with each summand defined by the same conditions as the corresponding differential summand for $\ctt(\Tdec)$.  The algebra action is the evident one defined in analogy with the actions on $\EE$ and $\FF$; it appears diagrammatically as a modified concatenation. See Figure \ref{fig:FidE}.
 \begin{figure}[h]
 \centering
 \includegraphics[scale = .7]{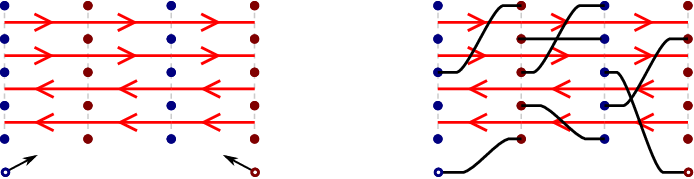} 
 \vskip .1 cm
       \caption{The $\tAA$ bimodule  $\FF\boxtimes(\ctt(\triv_{\PP})\otimes_{\AA} \EE)$, when $\PP= (-, -, +, +)$. Left: the background diagram. Right: an example of a generator.}\label{fig:FidE}
\end{figure}

Next we show that $\FF \boxtimes( \ctt(\triv_{\PP})\otimes_{\AA} \EE)$ is homotopy equivalent to $\AA\boxtimes \ctt(\triv_{\PP})\otimes_{\AA} C(f)$. To do so, and to prove the last two equivalences in Theorem B, we appeal to the interpretation of our algebras and bimodules in terms of Heegaard diagrams (see \cite[Section 8]{pv}, for example). Below, we briefly recall some basics about Heegaard diagrams, omitting technical details, and specializing to our needs.

Most generally, a bordered Heegaard diagram consists of a compact surface $\Sigma$ with one or more boundary components, along with two sets of pairwise disjoint, properly embedded curves (circles and/or arcs) on the surface, typically denoted as $\alpha$-curves and $\beta$-curves (drawn in red and blue below, respectively), and some additional arcs and/or points on the surface, typically denoted as $\zs$-arcs (drawn in green below), and $X$- and $O$-basepoints, satisfying certain conditions. Depending on which type of curves touch the boundary, the Heegaard diagram is called $\alpha$-$\alpha$ bordered, $\beta$-$\alpha$ bordered, and so on. A bordered Heegaard diagram specifies a topological object with boundary, such as a bordered $3$-manifold, a bordered-sutured $3$-manifold, or a tangle in a (bordered) $3$-manifold. See \cite{bfh2, bimod, hfmor, bs, pv}, among others.

A bordered Heegaard diagram is called admissible, if a certain technical condition is satisfied. We do not need the definition here, but we remark that every diagram can be ``converted" to an admissible diagram that describes the same topological object, and that all diagrams described in this paper are admissible. 

To the boundaries of an admissible bordered Heegaard diagram one can associate dg algebras, and to the diagram various bimodules (type $\tAA$, $\tDD$, etc.) over these algebras. The bimodules are generated by sets of intersection points between $\alpha$-curves and $\beta$-curves, with one point on each circle, and at most one point on each arc; the structure maps are defined by counting certain pseudo-holomorphic curves in $\Sigma\times I\times \R$; box tensor product corresponds to gluing diagrams along some common boundary. If two bordered Heegaard diagrams are related by a sequence of the usual Heegaard moves (isotopies, index one/two (de)stabilizations and handle slides of circles over arcs or circles), then the corresponding bimodules are homotopy equivalent, whereas an index zero/three stabilization results in tensoring with a two-dimensional vector space $\Ft\otimes \Ft[1]\{2\}$.

Next, we describe more precisely some specific types of bordered Heegaard diagrams relevant to this paper. 

In \cite{pv}, there are descriptions of $\beta$-$\alpha$ and $\alpha$-$\alpha$ diagrams for the algebras $\AA(\PP)$ and tangle decompositions $\Tdec$ respectively, such that the associated generators and structure maps are in bijection with those for the strand diagrams described in this paper. We give an outline below.

We describe the $\beta$-$\alpha$ diagram $\H_{\AA(\PP)}$ for $\AA(\PP)$; see the left diagram of Figure~\ref{fig:efa}, for example. This diagram consists of $n+1$ parallel $\alpha$-arcs and $n+1$ parallel $\beta$-arcs on a genus zero surface $\Sigma$ with three boundary components, i.e. on a pair of pants, which form a grid. Label the $\alpha$-arcs $\alpha_0, \ldots, \alpha_n$ from bottom to top, and the $\beta$-arcs $\beta_0, \ldots, \beta_n$ from right to left, as seen in Figure~\ref{fig:efa}. In the square formed by $\alpha_{i-1}$, $\alpha_i$, $\beta_{i-1}$, $\beta_i$, place an $O$ if $p_i=1$ or an $X$ if $p_i=-1$. The bijection between Heegaard diagram and strand diagram generators is defined by the bijection between intersection points and black strands sending the point $\alpha_i\cap \beta_j$ to the strand connecting $(0,j)$ to $(1,i)$. The differential is defined by counting empty rectangles (in bijection with resolving black-black crossings), and the algebra action by counting sets of partial rectangles (in bijection with concatenating with strand diagrams for algebra generators); see \cite[Chapter 4]{pv}, where these counts are described in detail. 

Given a tangle decomposition $\Tdec$, the $\alpha$-$\alpha$ diagram $\H_{\Tdec}$ and the correspondence of its generators and structure maps with the  strand diagrams ones are defined in a similar way.  The precise description is rather lengthy, and not needed for the arguments in the remaining proof, so we direct the interested reader to \cite[Chapter 4]{pv}, and illustrate an example in  Figure~\ref{fig:hd-strand}.

\begin{figure}[h]
 \centering
  \labellist
    \pinlabel  \textcolor{green}{$\mathbf{z}_1$} at 100 100
    \pinlabel  \textcolor{green}{$\mathbf{z}_2$} at 100 -5
     \pinlabel  \textcolor{red}{$\alphas^1$} at 215 50
     \pinlabel  \textcolor{red}{$\alphas^c$} at 110 22
     \pinlabel  \textcolor{red}{$\alphas^0$} at 7 50
         \pinlabel  \textcolor{blue}{$\betas$} at 30 10
       \endlabellist
 \includegraphics[scale = 1]{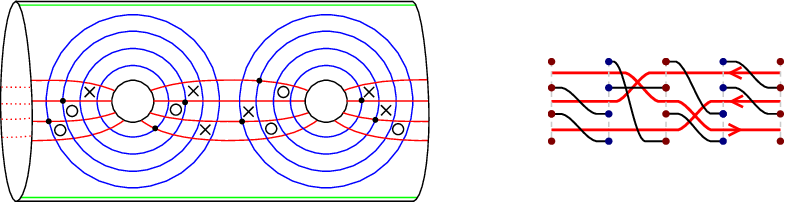} 
 \vskip .1 cm
       \caption{Left: an $\alpha$-$\alpha$ bordered Heegaard diagram for a tangle decomposition, along with a generator.  Right: the corresponding tangle decomposition and generator.}\label{fig:hd-strand}
\end{figure}

Similarly, below we construct $\beta$-$\alpha$ bordered Heegaard diagrams $\H_{\EE(\PP)}$ and $\H_{\FF(\PP)}$, so that the generators and structure maps for the associated $\tAA$ bimodules are in bijection with those in the strand diagram descriptions for $\EE(\PP)$ and $\FF(\PP)$. These diagrams combine properties of diagrams seen in \cite{bimod, bs, pv} and the definition and properties of bordered Floer homology generalize to them in a straight forward way.

We start with $\H_{\AA(\PP)}$. To get $\H_{\EE(\PP)}$, we add a $1$-handle to $\Sigma$ with feet just  bottom-left and bottom-right of the grid, as well as an $\alpha$-circle, denoted $\alpha_{-1}$, going once over the handle. To get $\H_{\FF(\PP)}$, we add a $1$-handle to $\Sigma$ with feet just  bottom-right and top-right of the grid, as well as a $\beta$-circle, denoted $\beta_{-1}$, going once over the handle. See the middle and right diagrams in Figure~\ref{fig:efa}, for example.  The bijection between Heegaard diagram and strand diagram generators and structure maps is analogous to that for $\H_{\AA(\PP)}$. For the curious reader, we remark that the topological objects described by these diagrams are certain trivial tangles in $S^2\times I$ with a solid torus drilled out. 

  \begin{figure}[h]
 \centering
   \labellist
    \pinlabel  \textcolor{green}{$\mathbf{z}_1$} at 30 105
    \pinlabel  \textcolor{green}{$\mathbf{z}_2$} at 30 -5
    \pinlabel  \textcolor{red}{$\alpha_0$} at 73 35
     \pinlabel  \textcolor{red}{$\alpha_1$} at 73 48
     \pinlabel  \textcolor{red}{$\alpha_2$} at 73 60
     \pinlabel  \textcolor{blue}{$\beta_0$} at -5 6
     \pinlabel  \textcolor{blue}{$\beta_1$} at -5 17
     \pinlabel  \textcolor{blue}{$\beta_2$} at -5 28
       \endlabellist
  \includegraphics[scale = 1]{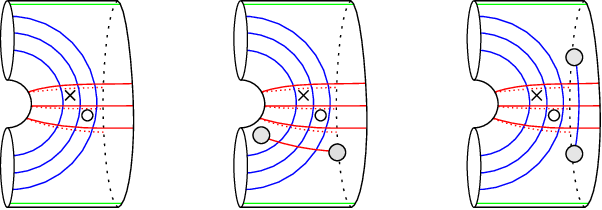} 
 \vskip .1 cm
       \caption{Left: the $\beta$-$\alpha$ diagram $\H_{\AA((+,-))}$ for the algebra $\AA((+,-))$. Middle: The diagram $\H_{\EE((+,-))}$. Right: The diagram $\H_{\FF((+,-))}$.}\label{fig:efa}
\end{figure}

We now return to the proof of Proposition~\ref{prop:ef}.

The combinatorics (i.e. generators and structure maps) of the strand diagram presentation of the bimodule $\FF \boxtimes( \ctt(\triv_{\PP})\otimes_{\AA} \EE)$ described earlier correspond to the combinatorics in the left Heegaard diagram of Figure \ref{fig:EFhd}. The $\beta$-circle that goes over the dark grey $1$-handle can be slid over the outermost $\beta$-circle to its right, to produce the second Heegaard diagram in Figure \ref{fig:EFhd}. 
The handle slide induces a homotopy equivalence between the structures associated to the two Heegaard diagrams.  The combinatorics of the right Heegaard diagram of Figure \ref{fig:EFhd} correspond to the combinatorics of the strand diagram presentation for the bimodule $\AA\boxtimes \ctt(\triv_{\PP})\otimes_{\AA} C(f)$.

 \begin{figure}[h]
 \centering
   \labellist
   \pinlabel  $\rotatebox{130}{\tiny$\boldsymbol{\dots}$}$ at 134 50
   \pinlabel  $\rotatebox{53}{\tiny$\boldsymbol{\dots}$}$ at 77 50
   \pinlabel  $\rotatebox{130}{\tiny$\boldsymbol{\dots}$}$ at 32 50
      \pinlabel  $\rotatebox{130}{\tiny$\boldsymbol{\dots}$}$ at 326 50
   \pinlabel  $\rotatebox{53}{\tiny$\boldsymbol{\dots}$}$ at 269 50
   \pinlabel  $\rotatebox{130}{\tiny$\boldsymbol{\dots}$}$ at 224 50
    \pinlabel  \textcolor{blue}{$\rotatebox{45}{\tiny$\dots$}$} at 129 70
    \pinlabel  \textcolor{blue}{$\rotatebox{45}{\tiny$\dots$}$} at 26 70
  \pinlabel  \textcolor{red}{$\rotatebox{90}{\tiny$\dots$}$} at 57 50
    \pinlabel  \textcolor{red}{$\rotatebox{90}{\tiny$\dots$}$} at 157 50
        \pinlabel  \textcolor{blue}{$\rotatebox{45}{\tiny$\dots$}$} at 320 70
    \pinlabel  \textcolor{blue}{$\rotatebox{45}{\tiny$\dots$}$} at 217 70
  \pinlabel  \textcolor{red}{$\rotatebox{90}{\tiny$\dots$}$} at 247 50
    \pinlabel  \textcolor{red}{$\rotatebox{90}{\tiny$\dots$}$} at 347 50
    \pinlabel \footnotesize{$\underbrace{\phantom{aaaaaa}}$} at 28 -7
        \pinlabel \footnotesize{$\underbrace{\phantom{aaaaaaaaaaaaa}}$} at 104 -7
            \pinlabel \footnotesize{$\underbrace{\phantom{aaaaaa}}$} at 221 -7
        \pinlabel \footnotesize{$\underbrace{\phantom{aaaaaaaaaaaaa}}$} at 297 -7
    \pinlabel  \footnotesize{$\FF$} at 28 -27
    \pinlabel \footnotesize{$\ctt(\triv_{\PP})\otimes_{\AA} \EE$} at 110 -27
    \pinlabel  \footnotesize{$\AA$} at 215 -27
    \pinlabel \footnotesize{$\ctt(\triv_{\PP})\otimes_{\AA} C(f)$} at 300 -27
       \endlabellist
 \includegraphics[scale = .7]{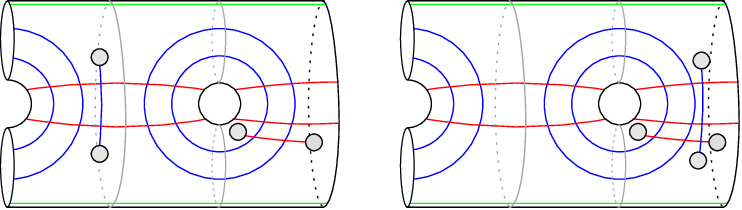} 
 \vskip .7 cm
       \caption{Left: a Heegaard diagram for the bimodule $\FF \boxtimes( \ctt(\triv_{\PP})\otimes_{\AA} \EE)$. Right: a Heegaard diagram for the bimodule  $\AA\boxtimes \ctt(\triv_{\PP})\otimes_{\AA} C(f)$.}\label{fig:EFhd}
\end{figure}

In terms of strand diagrams, the handle slide can be thought of as moving the dot at $(0,-1)$ two units to the right. See Figure \ref{fig:FEM}.
 \begin{figure}[h]
 \centering
 \includegraphics[scale = .7]{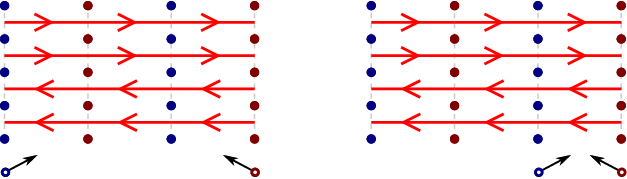} 
 \vskip .1 cm
       \caption{Left: the background diagram for the bimodule $\FF \boxtimes( \ctt(\triv_{\PP})\otimes_{\AA} \EE)$. Right: the background diagram for the bimodule  $\AA\boxtimes \ctt(\triv_{\PP})\otimes_{\AA} C(f)$. In this example, $\PP=(-,-,+,+)$.}\label{fig:FEM}
\end{figure}

Now, since $\AA\boxtimes \ctt(\triv_{\PP})\simeq \AA\otimes (\Ft\otimes \Ft[1]\{2\})^{\otimes n}$, we have $\AA\boxtimes \ctt(\triv_{\PP})\otimes_{\AA} C(f)\simeq \AA\otimes_{\AA} C(f)\otimes(\Ft\otimes \Ft[1]\{2\})^{\otimes n}\simeq C(f)\otimes (\Ft\otimes \Ft[1]\{2\})^{\otimes n}$.

Last, we observe there is a triangle
\begin{equation*}
\xymatrix{
\EE \otimes_{\AA} \FF \ar[r]^-{f} & \AA \ar[r] & C(f) \ar[r] & \EE \otimes_{\AA} \FF[1].
}
\end{equation*}
Replacing objects with equivalent ones, we get a triangle
\begin{equation*}
\xymatrix{
\EE \dtensor{\AA} \FF \ar[r] & \AA \ar[r] & \FF\dtensor{\AA} \EE \ar[r] & \EE \dtensor{\AA} \FF[1].
}
\end{equation*}

Last, we show that $\EE$ and $\FF$ commute with $\ctt$. We may assume that the rightmost elementary tangle of $\Tdec$ is trivial, so that $\ctt(\Tdec)$ is automatically right cofibrant: For any decomposition $\Tdec$ of a tangle $\T$, we know that if $\Tdec'$ is obtained from $\Tdec$ by concatenating with a trivial elementary tangle to the right, then $\ctt(\Tdec') \simeq \ctt(\Tdec)\otimes (\Ft\otimes \Ft[1]\{2\})^{\otimes |\bdy^1\T|}$. So if $\EE$ and $\FF$ commute with $\ctt(\Tdec')$, as in the statement of the proposition, then it follows they commute with $\ctt(\Tdec)$.

So assume $\Tdec$ is a decomposition for a tangle $\T$ and the rightmost elementary tangle in $\Tdec$ is trivial. We use a Heegaard diagram interpretation of the bimodules, as above. The combinatorics of the bimodule $\FF(-\bdy^0\T) \boxtimes \ctt(\Tdec)$ correspond to the combinatorics in the top left Heegaard diagram of Figure \ref{fig:EFT}. The $\beta$-circle that goes over the dark grey $1$-handle can be slid over the outermost $\beta$-circle corresponsing to each elementary tangle, one by one, to produce the top right Heegaard diagram in Figure \ref{fig:EFT}. These
handle sides induce homotopy equivalences between the algebraic structures for the respective Heegaard diagrams. Since the bimodule for the rightmost elementary tangle is cofibrant, the combinatorics of the top right Heegaard diagram correspond to the combinatorics for $\AA(-\bdy^0\T) \boxtimes \ctt(\Tdec) \otimes_{\AA(\bdy^1\T)} \FF(\bdy^1\T)$, which is equivalent to  $\AA(-\bdy^0\T) \boxtimes \ctt(\Tdec) \dtensor{\AA(\bdy^1\T)} \FF(\bdy^1\T)$. The argument for $\EE$ is analogous, see the bottom of Figure \ref{fig:EFT}.
 \begin{figure}[h]
 \centering
  \labellist
    \pinlabel  \textcolor{gray}{$\dots$} at 170 50
    \pinlabel  \textcolor{gray}{$\dots$} at 170 195
    \pinlabel  \textcolor{gray}{$\dots$} at 490 50
    \pinlabel  \textcolor{gray}{$\dots$} at 490 195
    \pinlabel  \footnotesize{$\FF(-\bdy^0\T)$} at 28 127
    \pinlabel \footnotesize{$\ctt(\T_1)$} at 105 127
    \pinlabel \footnotesize{$\dots$} at 164 127
    \pinlabel \footnotesize{$\ctt(\triv_{\bdy^1\T})$} at 235 127
    \pinlabel  \footnotesize{$\AA(-\bdy^0\T)$} at 347 127
    \pinlabel \footnotesize{$\ctt(\T_1)$} at 424 127
    \pinlabel \footnotesize{$\dots$} at 479 127
    \pinlabel \footnotesize{$\ctt(\triv_{\bdy^1\T})\otimes \FF(\bdy^1\T)$} at 558 127
        \pinlabel \footnotesize{$\underbrace{\phantom{aaaaaa}}$} at 30 145
        \pinlabel \footnotesize{$\underbrace{\phantom{aaaaaaaaaaa}}$} at 98 145
         \pinlabel \footnotesize{$\underbrace{\phantom{aaaaaaaaaaaaa}}$} at 234 145
                 \pinlabel \footnotesize{$\underbrace{\phantom{aaaaaa}}$} at 349 145
        \pinlabel \footnotesize{$\underbrace{\phantom{aaaaaaaaaaa}}$} at 418 145
         \pinlabel \footnotesize{$\underbrace{\phantom{aaaaaaaaaaaaa}}$} at 554 145
       \endlabellist
 \includegraphics[scale = .7]{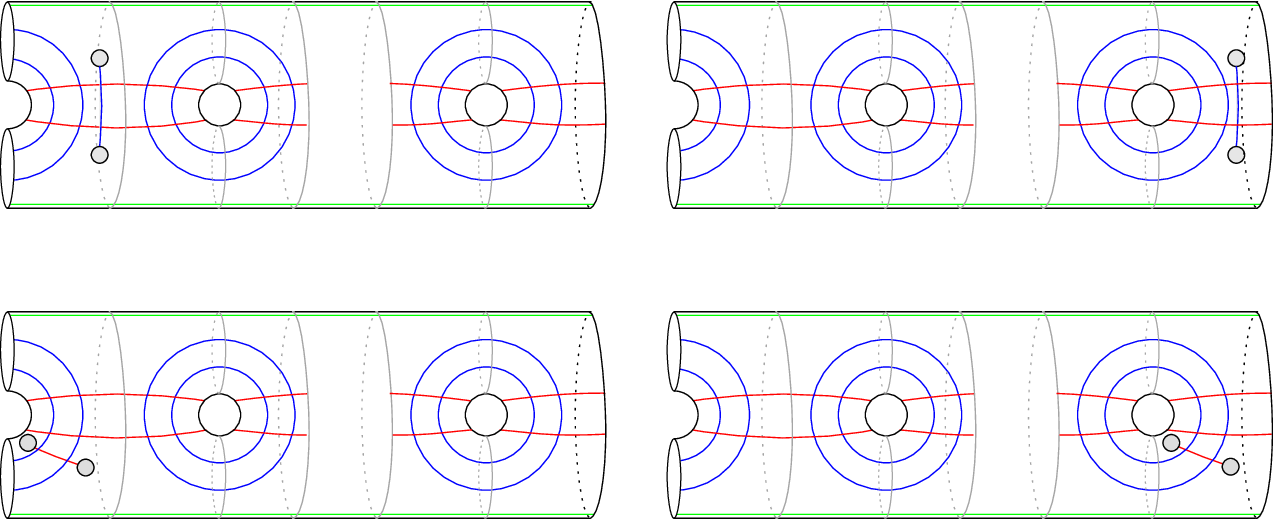} 
 \vskip .1 cm
       \caption{Top: the first and last diagram in a sequence of handle slides,  showing that $\FF(-\bdy^0\T) \boxtimes \ctt(\Tdec) \simeq \AA(-\bdy^0\T) \boxtimes \ctt(\Tdec) \dtensor{\AA(\bdy^1\T)} \FF(\bdy^1\T)$ (pieces of the Heegaard diagram are labeled with their respective bimodules). Bottom: the first and last diagram in a sequence of handle slides, showing that $\EE(-\bdy^0\T) \boxtimes \ctt(\Tdec) \simeq \AA(-\bdy^0\T) \boxtimes \ctt(\Tdec) \dtensor{\AA(\bdy^1\T)} \EE(\bdy^1\T)$.}\label{fig:EFT}
\end{figure}
\end{proof}
This concludes the proof of Theorem B of the Introduction.


\bibliographystyle{alpha}

\bibliography{master}

\end{document}